%% file: arxiv.tex
\documentclass[10pt,oneside]{article}
\usepackage[utf8]{inputenc}
\usepackage[T1]{fontenc}
\usepackage{lmodern,amsmath,amssymb,amsthm,bm}
\usepackage{booktabs,graphicx,microtype,parskip,needspace,tikz}
\usepackage[letterpaper,margin=1.1in]{geometry}
\usepackage[round]{natbib}
\usepackage[colorlinks=true,citecolor=blue,urlcolor=blue]{hyperref}
\hypersetup{pdftitle={On maximum likelihood estimation in the beta-model for random hypergraphs},pdfauthor={Milan Vojnovic and Yiliu Wang}}
\graphicspath{{figures/}}

\numberwithin{equation}{section}
\newtheorem{thm}{Theorem}[section]
\newtheorem{lm}[thm]{Lemma}
\newtheorem{cor}[thm]{Corollary}
\newtheorem{pro}[thm]{Proposition}
\theoremstyle{definition}
\newtheorem{definition}[thm]{Definition}
\newtheorem{ex}[thm]{Example}
\theoremstyle{remark}
\newtheorem{remark}[thm]{Remark}

\newcommand{\reals}{\mathbb{R}}
\newcommand{\E}{\mathbb{E}}
\renewcommand{\P}{\mathbb{P}}
\newcommand{\ind}{\mathbf{1}}

\newcommand{\rank}{\operatorname{rank}}
\newcommand{\tr}{\operatorname{tr}}
\newcommand{\conv}{\operatorname{conv}}
\newcommand{\ri}{\operatorname{ri}}
\newcommand{\Int}{\operatorname{int}}
\newcommand{\KL}{\operatorname{KL}}

\newcommand{\norm}[1]{\lVert #1\rVert}
\begin{document}
\title{On maximum likelihood estimation in the $\beta$-model for random hypergraphs}
\author{Milan Vojnovi\'c$^{1}$ and Yiliu Wang$^{2,3}$\thanks{Email correspondence: m.vojnovic@lse.ac.uk; yiliuw@uw.edu. Work carried out in part while Yiliu Wang was at the Department of Statistics, London School of Economics and Political Science.}\\[0.5ex]
{\small $^{1}$Department of Statistics, London School of Economics and Political Science}\\
{\small London, WC2A 2AE, United Kingdom}\\[0.5ex]
{\small $^{2}$Allen Institute, University of Washington, Seattle, Washington 98109}\\
{\small $^{3}$Department of Neurobiology \& Biophysics, University of Washington}\\
{\small Seattle, Washington 98195, United States}}
\date{}
\maketitle
\begin{abstract}
We study maximum likelihood estimation in the $\beta$-model for random hypergraphs, in which a group of vertices succeeds with a probability that depends on the sum of its members' parameters. The data are independent binary outcomes of experiments on groups of vertices, tested according to an arbitrary, possibly incomplete, design. Suppose that every tested group has $k$ vertices and success probability in $[\epsilon,1-\epsilon]$. For complete designs in which every group is tested at least $r$ times, a unique maximum likelihood estimator (MLE) exists with high probability if $r\geq C_k\epsilon^{-1}\log n/n^{k-1}$, and for $m$ experiments on uniformly random groups, it suffices that $m\geq C_k\epsilon^{-1}n\log n$, where $C_k$ depends only on $k$. Both conditions are linear in $\epsilon^{-1}$ and, for fixed $k$, cannot be improved in general beyond the constant; they follow from finite-sample bounds for arbitrary designs. We bound the mean squared error of the MLE over a parameter box by the smallest eigenvalue of the correlation matrix of the design, and show that the rate is minimax optimal for balanced complete designs. Among $k$-uniform designs with a given number of experiments, designs in which every pair of vertices takes part together in the same number of experiments maximize this eigenvalue, and uniform random designs nearly do so with high probability. Finally, the design matrix of a uniform random design has full column rank with high probability if $m\geq\max\{n,c(n/k)\log n\}$ for fixed $c>2$ and $k=o(n/\log n)$, or $m\geq n$ when $k\geq(1+\xi)\log n$. For fixed $k$, the threshold $(n/k)\log n$ is sharp, and above it rank deficiency is asymptotically as likely as some vertex taking part in no experiment.
\end{abstract}
\section{Introduction}
\input{intro}
\section{Problem formulation and preliminary results}\label{sec:model}
\input{model}
\section{MLE existence and uniqueness}\label{sec:existence}
\input{existence}
\section{Estimation error}\label{sec:accuracy}
\input{accuracy}
\section{Random designs}\label{sec:random}
\input{random}
\Needspace{8\baselineskip}
\section{Conclusion}
\input{conc}
\newpage

\bibliographystyle{abbrvnat}
\bibliography{references}
\clearpage
\begin{center}\textbf{\large Supplementary Material}\end{center}
\appendix
\input{appendix}
\section{Use of AI tools}
In 2026, the authors used Claude Opus 5.5 (Anthropic) and GPT-6 (OpenAI) to assist with revising the text, searching the literature, and checking proofs. Claude also assisted with developing the separating-vector approach of Section~\ref{sec:separation}, its application to random designs, and Proposition~\ref{pro:lambda-max}, and with writing the code for the numerical experiments. GPT-6 also assisted with completing the rank results of Section~\ref{sec:random-rank}, including the proof of Theorem~\ref{thm:rank-growing} based on \cite{FKS22}. The authors verified all results and take full responsibility for the content of the paper.

\end{document}

%% file: intro.tex
Many statistical problems ask for the strengths of individuals when only the outcomes of groups are observed. On a collaboration platform, the success or failure of a project reflects the skills of its team members \citep{KR18,SVY21,Mehta}; similar data arise in team sports, crowdsourcing, multiplayer games, and in group interactions studied in network science, neuroscience, and ecology \citep{diffusion,giusti,ecology}. A natural model for such data is the $\beta$-model for random hypergraphs, in which each tested group succeeds with a probability that depends on the sum of the strengths of its members. This is a logistic regression model whose design matrix is a sparse $0$--$1$ incidence matrix, with one row per experiment and one column per individual. This paper studies two basic questions about maximum likelihood estimation in this model: when the maximum likelihood estimator (MLE) exists and is unique, and how accurate it is. Both questions are governed by the design, that is, by which groups are tested and how many times.

The design is rarely under the control of the analyst, and it is usually sparse. The number of groups of size $k$ among $n$ individuals is $\binom nk$, so testing every group is infeasible even for moderate $n$ and $k$, and which groups are tested is often determined by the platform, by fixtures or matchmaking, or by a budget. Outcomes can also be sparse, when success or failure is rare. Existing results for the $\beta$-model mostly concern complete designs, in which every group of a given size is tested. Results that allow edge probabilities to approach zero or one either let them do so only slowly \citep{chatterjee2011,Yan2013} or rely on regularization or additional structure on the parameters \citep{sparsebeta,Shao2021L2}. We give finite-sample conditions for existence and accuracy that hold for arbitrary designs, evaluate them for complete designs and for designs in which the tested groups are sampled at random, and show that their dependence on the number of individuals, the probability margin, and replication cannot be improved in general.

\paragraph{The model}
The model is defined for $n\geq2$ vertices and a parameter vector $\bm\beta\in\reals^n$ such that a nonempty set $S\subseteq V:=\{1,\ldots,n\}$ is an edge with probability
\[
p_S(\bm\beta):=\sigma\left(\sum_{v\in S}\beta_v\right),
\]
where $\sigma(x):=1/(1+\exp(-x))$ is the logistic function and $\bm\beta$ is the unknown parameter. We interpret $\beta_v$ as the strength of vertex $v$. These edge probabilities were introduced by \cite{stasi}, who required each edge to contain at least two vertices; we also allow edges consisting of a single vertex, which represent individual outcomes. The model accommodates $k$-uniform hypergraphs, in which each edge has $k$ vertices. In particular, the case $k=2$ is the $\beta$-model for random graphs studied by \cite{chatterjee2011}, where $\{u,v\}$ is an edge with probability $\sigma(\beta_u+\beta_v)$. This is an undirected version of the $p_1$ model of \cite{Holland1981} and a generalization of the random graph model of \cite{ER59}.

\paragraph{The data}
An \emph{experiment} tests a group $S\subseteq V$ of vertices and has a binary outcome: it succeeds with probability $p_S(\bm\beta)$, independently of all other experiments. In the application above, an experiment is a project, the group is its team, and the outcome is the success or failure of the project. The \emph{design} specifies which groups are tested and how many times. We call the distinct tested groups \emph{candidate edges}, and the number of experiments on a candidate edge its \emph{replication number}. The candidate edges form a hypergraph on $V$ that describes the design, and the successful experiments form the observed random hypergraph, in which a candidate edge can appear several times. With one experiment on every $k$-subset of $V$, the observed hypergraph is a $k$-uniform random hypergraph of the kind considered by \cite{stasi}. A design is \emph{complete} if its candidate edges are all the $k$-subsets of $V$, for some $k$, and \emph{incomplete} otherwise; it is \emph{balanced} if all replication numbers are equal. Figure~\ref{fig:hypergraph} shows an incomplete design: panel~(a) shows its candidate edges and their replication numbers, and panel~(b) lists the resulting experiments and their outcomes.

The outcomes of the experiments are always random, while the design may be fixed or random. A fixed design is chosen before the outcomes are observed, and probabilities are then over the outcomes alone; this is the setting of Sections~\ref{sec:existence} and~\ref{sec:accuracy}. In a random design, studied in Section~\ref{sec:random}, the groups $S_1,\ldots,S_m$ are $k$-subsets of $V$ sampled independently and uniformly at random, and, conditional on the sampled design, the outcomes are independent, with experiment $j$ succeeding with probability $p_{S_j}(\bm\beta)$. There are then two sources of randomness, the draw of the groups and the outcomes given the design, and our results hold with high probability over both.

\begin{figure}[t]
\centering
\input{fig-design-outcomes}
\caption{An incomplete $3$-uniform design. (a)~The candidate edges are the four groups shown, and each is tested the indicated number of times. (b)~The resulting $m=8$ experiments, each consisting of a tested group $S_j$ and its outcome $Y_j$, with $Y_j=1$ for success.}
\label{fig:hypergraph}
\end{figure}

\paragraph{The role of the design}
A unique MLE exists if and only if the observed degrees, the numbers of successful experiments involving each vertex, lie in the interior of a polytope determined by the design. For example, the MLE does not exist if some vertex succeeds in all or none of its experiments, as the likelihood then increases strictly as the parameter of that vertex tends to $+\infty$ or $-\infty$, approaching a supremum that is not attained. The estimation accuracy is controlled by the correlation matrix of the design, whose entries count the experiments involving each pair of vertices; its smallest eigenvalue measures how well the tested groups distinguish the individual vertex parameters.

\subsection{Main results}

Throughout, $m$ denotes the total number of experiments, and $r$ denotes the least replication number of a complete design, so that a balanced complete $k$-uniform design has $m=r\binom nk$. Edge probabilities are assumed to lie in $[\epsilon,1-\epsilon]$ for some $\epsilon\in(0,1/2]$. A subscript on a constant indicates the parameters on which it may depend, and our order comparisons are for fixed $k$.
\begin{itemize}
\item \emph{MLE existence.} For complete $k$-uniform designs, a unique MLE exists with probability at least $1-1/n$ if $r\geq C_k\epsilon^{-1}\log n/n^{k-1}$ (Theorem~\ref{thm:hyper-sharp}). For uniform random designs, the same holds if $m\geq C_k\epsilon^{-1}n\log n$ (Theorem~\ref{thm:suffrnd}), which is the same condition as for balanced complete designs. Both conditions are necessary up to the value of the constant, by~\eqref{equ:complete-lower} and~\eqref{equ:rndlb}. These results follow from a bound for arbitrary designs (Proposition~\ref{pro:master}). We also give conditions through degree polytopes, for complete graph designs with unequal replication (Theorem~\ref{thm:mlesuff}), and through interior representations of the degree sequence, for all $2\leq k<n$ (Proposition~\ref{pro:projection-complete}).
\item \emph{Estimation error.} For any design whose groups have at most $k$ vertices, and the MLE $\hat{\bm\beta}_b$ over a box $[-b,b]^n$ containing the true parameter, the mean squared error per parameter is at most $1/(4\sigma'(kb)^2\lambda_1(\bm M))$, where $\lambda_1(\bm M)$ is the smallest eigenvalue of the correlation matrix (Theorem~\ref{lem:mse1}). For balanced complete designs this is of order $n/m$, which is minimax optimal (Proposition~\ref{pro:minimax}), and the same order holds with high probability for uniform random designs.
\item \emph{Rank of random designs.} For uniform random designs, the design matrix has full column rank with high probability if $m\geq\max\{n,c(n/k)\log n\}$, for $2\leq k=o(n/\log n)$ and any fixed $c>2$, and if $m\geq n$ when $k\geq(1+\xi)\log n$ (Theorem~\ref{thm:rank-growing}). For fixed $k$, the threshold is sharp at $m=(n/k)\log n$, and above it the probability of rank deficiency is asymptotically the probability that some vertex is in no experiment (Theorem~\ref{thm:rank-fixed}).
\end{itemize}

Two of these results deserve emphasis. First, to our knowledge, Theorems~\ref{thm:hyper-sharp} and~\ref{thm:suffrnd} give the first conditions on the parameters and the design under which a unique MLE exists with high probability in the $\beta$-model, for graphs or hypergraphs, that are linear in $\epsilon^{-1}$. For fixed $k$, they are matched, up to the value of the constant, by necessary conditions for a guarantee that holds for all edge probabilities in $[\epsilon,1-\epsilon]$. Such conditions differ from criteria that are checked on the observed degrees \citep{rinaldo2013,KarwaSlavkovic2016}. Earlier finite-sample conditions of this kind rely on concentration and, like our Corollary~\ref{thm:eps}, require a number of experiments quadratic in $\epsilon^{-1}$ \citep{rinaldo2013}, while asymptotic results take the margin $\epsilon$ fixed or let it tend to zero only slowly \citep{chatterjee2011,Yan2013,NandyBhattacharya2024}. Second, the same sufficient condition for MLE existence holds for uniform random designs as for balanced complete designs with the same number of experiments, and the estimation error is of the same order for both. Moreover, among all $k$-uniform designs with $m$ experiments, $\lambda_1(\bm M)$ is maximized by the designs in which every pair of vertices takes part together in the same number of experiments (Proposition~\ref{pro:lambda-max}), and uniform random designs come within a factor $1-\delta$ of the maximum with high probability once $m$ is of order at least $n\log n$ (Proposition~\ref{lem:lambdaconc}). In this sense, testing every group is unnecessary.

\subsection{Related work}

\paragraph{The $\beta$-model}
For random graphs with complete designs, \cite{chatterjee2011} showed uniform consistency of the MLE, and \cite{Yan2013} established asymptotic normality. \cite{rinaldo2013} characterized MLE existence through the polytope of graph degree sequences, allowing unequal replication numbers, and gave finite-sample probability bounds for a common replication number. \cite{KarwaSlavkovic2016} gave easily checked conditions for the existence of the MLE, and \cite{Hillar2013} extended the $\beta$-model to weighted graphs and established consistency of the MLE. \cite{Yan2016} studied moment estimation for a class of models containing the $\beta$-model, \cite{Mukherjee2016} identified detection thresholds on sparse graphs, and sparse networks were also studied through a sparse variant of the model \citep{sparsebeta} and through regularized estimators \citep{Shao2021L2}. The hypergraph $\beta$-model was introduced by \cite{stasi}, who provided iterative proportional scaling and fixed-point algorithms for the MLE and observed in simulations a transition from non-existence to existence as the edge density increases, without giving conditions for existence or bounds on the error. \cite{NandyBhattacharya2024} studied the layered hypergraph $\beta$-model, which allows edges of several sizes, and established existence, estimation rates, minimax optimality, asymptotic normality, and goodness-of-fit tests for complete designs with bounded parameters. In comparison, our existence bounds give explicit finite-sample probability estimates, quantify replication, and cover incomplete designs and edge probabilities approaching zero or one, and our estimation bounds express the effect of the design through its correlation matrix. Other statistical models for hypergraphs include models for folksonomies \citep{randhyper}, protein interactions \citep{murgas}, configuration and generative models \citep{bala,10.1093/comnet/cnaa018,marios,Yuan2022}, community detection \citep{ZhenWang2023,Ruggeri2023}, and latent space models \citep{YuZhu2025}; see \cite{Battiston2020} and \cite{Bick2023} for surveys.

\paragraph{Logistic regression and paired comparisons}
The separation criterion for the existence of the MLE in logistic regression is due to \cite{AA84}, and the convex-support formulation follows from the theory of exponential families \citep{bn78}. \cite{Candes-Sur-18} studied the existence threshold for logistic regression with Gaussian covariates, and \cite{Sur2019,Salehi2019} studied estimation in related high-dimensional regimes; the covariates here are instead sparse incidence vectors, which leads to different conditions. The closest analog is the Bradley--Terry model of paired comparisons, whose design vectors have the form $\bm e_u-\bm e_v$. There, the MLE exists if and only if the directed graph of outcomes is strongly connected \citep{Ford1957}, and estimation bounds involve the spectral gap of the Laplacian of the comparison graph \citep{HOX14,VY2016,SBBPRW16,HanLuoXu2026}. For the graph $\beta$-model, the design vectors are $\bm e_u+\bm e_v$, the corresponding matrix is the signless Laplacian, and its smallest eigenvalue is related to graph non-bipartiteness \citep{DR94,FF12}; for hypergraphs, the correlation matrix is not a Laplacian.

\paragraph{Ranks of random matrices}
Ranks of sparse random matrices have been studied by \cite{Costello2008,Costello2010,CFP19}. For fixed $k\geq3$, \cite{CFP19} established the threshold $(n/k)\log n$ for the maximum possible rank over the binary field. Identifiability requires instead full column rank over the real numbers, which is a different property: for even $k$, the all-ones vector is in the kernel over the binary field but not over the real numbers. Our proof for fixed $k$ works over a finite field whose characteristic exceeds $k$, which suffices because the minors of a $0$--$1$ matrix are integers, and a minor that vanishes over the real numbers also vanishes modulo the characteristic. For growing $k$, we use the square-matrix result of \cite{FKS22} together with a reduction based on sums of disjoint blocks of experiments.

\paragraph{Earlier versions of the results}
Some of the results in this paper appeared, in a different form, in the PhD thesis of the second author \citep{Wang2022Thesis}. Proposition~\ref{pro:projection-complete}, the separating-vector bounds of Section~\ref{sec:separation}, Propositions~\ref{pro:minimax} and~\ref{pro:lambda-max}, and Theorems~\ref{thm:rank-growing} and~\ref{thm:rank-fixed} are new; Theorem~\ref{thm:rank-growing} proves Conjecture~4.2.3 of \cite{Wang2022Thesis}. Corollary~\ref{cor:hyper}, Theorem~\ref{lem:mse1}, and Theorem~\ref{thm:suffrnd} sharpen results of the thesis, and the necessary conditions~\eqref{equ:complete-lower} and~\eqref{equ:rndlb} extend those of the thesis for graphs to $k$-uniform designs.

\subsection{Organization of the paper}

Section~\ref{sec:model} provides the problem formulation and preliminary results. Sections~\ref{sec:existence} and~\ref{sec:accuracy} concern fixed designs. Section~\ref{sec:existence} gives conditions for MLE existence and uniqueness, through degree polytopes, interior representations, and separating vectors, and a matching necessary condition. Section~\ref{sec:accuracy} bounds the estimation error in terms of the smallest eigenvalue of the correlation matrix and relates this eigenvalue to the structure of the design. Section~\ref{sec:random} turns to random designs and studies the rank of the design matrix, MLE existence, and estimation error. The proofs are in the Appendix.

%% file: fig-design-outcomes.tex
\begin{tikzpicture}[x=0.085mm,y=-0.085mm,
  edge/.style={draw=black!65,line width=0.35pt,fill=black,fill opacity=0.07},
  vtx/.style={circle,draw=black,fill=white,line width=0.4pt,inner sep=0pt,minimum size=4.5pt},
  rep/.style={font=\footnotesize,inner sep=1pt}]
\path[edge] (168,290) ellipse [x radius=163,y radius=192];
\path[edge] (322,182) [rotate around={-22:(322,182)}] ellipse [x radius=185,y radius=138];
\path[edge] (337,385) [rotate around={-15:(337,385)}] ellipse [x radius=143,y radius=158];
\path[edge] (563,150) [rotate around={10:(563,150)}] ellipse [x radius=175,y radius=140];
\foreach \v/\x/\y/\a in {1/90/365/-35,2/203/158/-35,3/272/288/0,4/298/485/0,5/434/382/180,6/423/113/-90,7/626/78/-35,8/598/222/0}
  \node[vtx,label={[label distance=0.5pt]\a:\small$\v$}] at (\x,\y) {};
\node[rep,anchor=east] at (-12,250) {twice};
\node[rep] at (300,5) {once};
\node[rep,anchor=west] at (485,470) {three times};
\node[rep,anchor=west] at (745,200) {twice};
\node[font=\small] at (370,620) {(a) Candidate edges};
\node[anchor=west,font=\small] at (930,300) {%
\renewcommand{\arraystretch}{1.05}%
\begin{tabular}{@{}ccc@{}}
\multicolumn{3}{c}{(b) Experiments}\\[3pt]
$j$ & $S_j$ & $Y_j$\\ \hline
1 & $\{1,2,3\}$ & 1\\
2 & $\{1,2,3\}$ & 0\\
3 & $\{2,3,6\}$ & 1\\
4 & $\{3,4,5\}$ & 0\\
5 & $\{3,4,5\}$ & 1\\
6 & $\{3,4,5\}$ & 1\\
7 & $\{6,7,8\}$ & 0\\
8 & $\{6,7,8\}$ & 1
\end{tabular}};
\end{tikzpicture}

%% file: model.tex
\paragraph{Problem formulation}
Let $V=\{1,\ldots,n\}$, $n\geq2$, and let $S_1,\ldots,S_m$ be a collection of nonempty subsets of $V$. We consider independent Bernoulli outcomes $Y_1,\ldots,Y_m$ with
\[
\P[Y_j=1]=\sigma\left(\sum_{v\in S_j}\beta_v\right),
\qquad \sigma(x)=\frac{1}{1+e^{-x}},
\]
where $\bm\beta\in\reals^n$ is the unknown parameter. We call $(S_j,Y_j)$ an experiment on the group $S_j$, and the list $S_1,\ldots,S_m$ the design. The same group may be tested in several experiments. A design is $k$-uniform if every candidate edge contains $k$ vertices; the graph case is $k=2$.

Let $E$ be the set of distinct groups among $S_1,\ldots,S_m$, which we call candidate edges, and let $M_S\geq1$ denote the number of experiments on $S\in E$. We call $M_S$ the replication number of $S$. A design is complete if $E$ contains all $k$-subsets of $V$, and balanced if every candidate edge has the same replication number $r$. When the replication numbers can differ, we write $r=\min_{S\in E}M_S$ for the least replication number. Thus a balanced complete design has $m=r\binom nk$ experiments. Designs that omit some candidate edges are called incomplete. In Sections~\ref{sec:existence} and~\ref{sec:accuracy}, the design is fixed and probabilities are over the outcomes. In Section~\ref{sec:random}, the design is random, and probabilities are over both.

For $\bm x_j=\ind_{S_j}\in\{0,1\}^n$, let $\bm X=(\bm x_1,\ldots,\bm x_m)^\top$ be the design matrix. The $\beta$-model is then logistic regression with $\P[Y_j=1]=\sigma(\bm x_j^\top\bm\beta)$. We define the unnormalized correlation (Gram) matrix
\[
\bm M=\bm X^\top\bm X=\sum_{j=1}^m\bm x_j\bm x_j^\top.
\]
Its entry $M_{u,v}$ is the number of experiments involving both vertices $u$ and $v$, and $M_{u,u}$ is the number involving $u$. For graph designs, $M_{u,v}$ is the replication number of the candidate edge $\{u,v\}$. We use $\lambda_1(\bm M)$ for its smallest eigenvalue, and $\norm{\cdot}$ for the Euclidean norm.

For observed outcomes $\bm y\in\{0,1\}^m$, the log-likelihood is
\begin{align}
\ell(\bm\beta)
&=\sum_{j=1}^m\bigl[y_j\log\sigma(\bm x_j^\top\bm\beta)
 +(1-y_j)\log(1-\sigma(\bm x_j^\top\bm\beta))\bigr] \notag\\
&=\bm d(\bm y)^\top\bm\beta-\kappa(\bm\beta),\label{equ:loglik}
\end{align}
where $\bm d(\bm y)=\bm X^\top\bm y$ is the sufficient statistic and $\kappa(\bm\beta)=\sum_j\log(1+e^{\bm x_j^\top\bm\beta})$ is the log-partition function. The coordinate $d_v(\bm y)=\sum_{j:v\in S_j}y_j$ is the number of successful experiments involving vertex $v$, that is, the degree of $v$ in the observed hypergraph formed by the successful experiments. We call $\bm d(\bm y)$ the \emph{degree sequence} of the outcomes. An MLE over a convex set $\Theta$ is a maximizer of $\ell$ on $\Theta$. Our existence results concern $\Theta=\reals^n$. For estimation bounds we also consider the compact parameter box $\Theta_b=\{\bm\beta':\norm{\bm\beta'}_\infty\leq b\}$.

\paragraph{Convexity and overlap}
\begin{lm}\label{lem:sc}
The negative log-likelihood is convex, and it is strictly convex if and only if $\rank(\bm X)=n$, equivalently $\lambda_1(\bm M)>0$.
\end{lm}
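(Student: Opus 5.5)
The plan is to compute the Hessian of the negative log-likelihood explicitly and read off both claims from its structure. From \eqref{equ:loglik}, $-\ell(\bm\beta)=\kappa(\bm\beta)-\bm d(\bm y)^\top\bm\beta$. The linear term does not affect convexity, so everything reduces to $\kappa$. Differentiating twice gives
\[
\nabla^2\kappa(\bm\beta)=\sum_{j=1}^m\sigma'(\bm x_j^\top\bm\beta)\,\bm x_j\bm x_j^\top=\bm X^\top\bm D(\bm\beta)\bm X ,
\]
where $\bm D(\bm\beta)=\mathrm{diag}(\sigma'(\bm x_j^\top\bm\beta))$ and $\sigma'(x)=\sigma(x)(1-\sigma(x))\in(0,1/4]$. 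For any $\bm z$ we then have $\bm z^\top\nabla^2\kappa(\bm\beta)\bm z=\sum_j\sigma'(\bm x_j^\top\bm\beta)(\bm x_j^\top\bm z)^2\geq0$. This proves convexity.

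For the ``if'' direction of strict convexity, suppose $\rank(\bm X)=n$. Then for $\bm z\neq\bm 0$ some $\bm x_j^\top\bm z\neq0$. Every weight $\sigma'(\cdot)$ is strictly positive, so the Hessian is positive definite at every $\bm\beta$. Positive definiteness everywhere implies strict convexity. To see this, restrict to a segment $t\mapsto\bm\beta+t\bm z$; the second derivative is strictly positive, so the restriction is strictly convex.

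For the ``only if'' direction, suppose $\rank(\bm X)<n$ and pick $\bm z\neq\bm 0$ with $\bm X\bm z=\bm 0$. Then $\bm x_j^\top(\bm\beta+t\bm z)=\bm x_j^\top\bm\beta$ for all $j$, so $\kappa$ is constant along the line. We also have $\bm d(\bm y)^\top\bm z=\bm y^\top\bm X\bm z=0$. Hence $-\ell$ is constant along a nondegenerate line, so it is not strictly convex.

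Finally, the equivalence $\rank(\bm X)=n\iff\lambda_1(\bm M)>0$ follows from $\bm z^\top\bm M\bm z=\norm{\bm X\bm z}^2$. This identity shows $\ker\bm M=\ker\bm X$ and that $\bm M$ is positive semidefinite. So $\bm M$ is positive definite, i.e.\ $\lambda_1(\bm M)>0$, exactly when $\ker\bm X=\{\bm 0\}$.

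There is no real obstacle here. The only point needing care is that strict convexity is a pointwise property along lines. Positive definiteness of the Hessian gives the sufficiency, while the necessity requires exhibiting an actual line of constancy rather than merely a singular Hessian. The kernel argument handles that.
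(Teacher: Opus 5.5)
Your proof is correct and follows the paper's own argument. The paper reads strict convexity off the Hessian $\sum_j\sigma'(\bm x_j^\top\bm\beta)\bm x_j\bm x_j^\top$ with $\sigma'>0$, which is positive definite exactly when $\bm X$ has full column rank, and refutes it otherwise through a kernel vector $\bm\alpha$ along whose line $\bm\beta+t\bm\alpha$ the likelihood is constant. Your version only makes explicit two points the paper leaves implicit: that $\bm d(\bm y)^\top\bm\alpha=\bm y^\top\bm X\bm\alpha=0$, and that the identity $\bm z^\top\bm M\bm z=\norm{\bm X\bm z}^2$ gives $\ker\bm M=\ker\bm X$.
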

Indeed, its Hessian is
\begin{equation}
\nabla^2(-\ell)(\bm\beta)=\sum_{j=1}^m\sigma'(\bm x_j^\top\bm\beta)\bm x_j\bm x_j^\top,
\qquad \sigma'(z)>0.\label{equ:hessian}
\end{equation}
It is positive definite precisely when the columns of $\bm X$ are independent. If $\bm X\bm\alpha=0$, the likelihood is constant along $\bm\beta+t\bm\alpha$. Full rank ensures uniqueness when a maximizer exists; existence over $\reals^n$ also depends on the outcomes.

\begin{definition}\label{def:overlap}
A vector $\bm\alpha\in\reals^n\setminus\{\bm0\}$ is a separating vector for $(\bm X,\bm y)$ if, for every $j$,
\[
\bm x_j^\top\bm\alpha\geq0\quad\hbox{if }y_j=1,
\qquad
\bm x_j^\top\bm\alpha\leq0\quad\hbox{if }y_j=0.
\]
The pair $(\bm X,\bm y)$ satisfies the overlapping condition if there is no separating vector for $(\bm X,\bm y)$.
\end{definition}

\begin{pro}\label{thm:mle-overlap}
There exists a unique finite MLE over $\reals^n$ if and only if $(\bm X,\bm y)$ satisfies the overlapping condition.
\end{pro}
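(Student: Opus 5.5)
The plan is the standard Albert--Anderson argument, adapted to our definition of separating vector. Here a separating vector uses weak inequalities and allows the degenerate case $\bm X\bm\alpha=\bm0$. For the \emph{necessity} direction, suppose a separating vector $\bm\alpha$ exists; I will show that either uniqueness or existence fails. If $\bm X\bm\alpha=\bm0$, then by Lemma~\ref{lem:sc} the likelihood is constant along $\bm\beta+t\bm\alpha$, so any maximizer is non-unique. Otherwise some $j$ has $\bm x_j^\top\bm\alpha\neq0$, with sign agreeing with $y_j$. Each term of $\ell$ is $y_j\log\sigma(z_j)+(1-y_j)\log\sigma(-z_j)$ with $z_j=\bm x_j^\top\bm\beta$. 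Along $\bm\beta+t\bm\alpha$, every term is nondecreasing in $t$, and the term for that $j$ is strictly increasing. Hence for any candidate maximizer $\hat{\bm\beta}$, the point $\hat{\bm\beta}+\bm\alpha$ has strictly larger likelihood, so no finite MLE exists.

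For \emph{sufficiency}, assume no separating vector exists. First, full rank: if $\bm X\bm\alpha=\bm0$ with $\bm\alpha\neq\bm0$, then $\bm\alpha$ trivially satisfies all the inequalities and is separating, a contradiction. So $\rank(\bm X)=n$, and by Lemma~\ref{lem:sc} $-\ell$ is strictly convex, which gives uniqueness once a maximizer exists. For existence, I will show that $-\ell$ is coercive. Define the recession function
\[
g(\bm\alpha)=\lim_{t\to\infty}\frac{-\ell(t\bm\alpha)}{t}
=\sum_{j:y_j=1}(-\bm x_j^\top\bm\alpha)_+ +\sum_{j:y_j=0}(\bm x_j^\top\bm\alpha)_+,
\]
using $-\log\sigma(tz)/t\to(-z)_+$.

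The overlapping condition says exactly that $g(\bm\alpha)>0$ for every $\bm\alpha\neq\bm0$. Since $g$ is continuous, it attains a minimum $c>0$ on the unit sphere. A convex function whose recession function is positive on the sphere has bounded level sets (Rockafellar, Theorem~27.1). More directly, convexity of $-\ell$ together with $-\ell\geq0$ gives, for $\|\bm\beta\|=t$, a bound $-\ell(\bm\beta)\geq$ a linear function of $t$ with slope close to $c$ for large $t$, uniformly in direction.

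The main obstacle is making this uniformity precise. For that, I would use an elementary inequality instead of the limit: $-\log\sigma(z)\geq(-z)_+$ for all $z$. This yields
\[
-\ell(\bm\beta)\geq g(\bm\beta)=\|\bm\beta\|\,g(\bm\beta/\|\bm\beta\|)\geq c\|\bm\beta\|,
\]
because $g$ is positively homogeneous. So the sublevel sets of $-\ell$ are bounded, and since $-\ell$ is continuous a minimizer exists. Combined with strict convexity, the minimizer is unique, and this gives the stated equivalence.
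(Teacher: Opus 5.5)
Your proof is correct and follows essentially the same route as the paper's. Necessity splits into the strictly improving case and the kernel (constant-likelihood) case, and sufficiency combines two facts: a kernel vector would itself be separating, which gives full rank and strict convexity, and $\log(1+e^z)\geq\max\{0,z\}$ gives coercivity. The only differences are cosmetic: the paper lower-bounds $-\ell$ by $\max_j$ of the signed margins rather than by your sum of positive parts $g$, and your detour through the recession function and Rockafellar's theorem is unnecessary once you use that elementary inequality.
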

This is the separation criterion of \cite{AA84}. In particular, overlap implies full column rank: every nonzero vector in the kernel of $\bm X$ would be a separating vector. The direct proof in Appendix~\ref{app:prelim} shows that the negative log-likelihood does not increase along a separating vector, so that it has no unique minimizer, and that under overlap it grows at least linearly in $\norm{\bm\beta}$, so that it attains its minimum.

\paragraph{Degree polytopes}
The convex support of $\bm d(\bm Y)$ is
\begin{equation}
C_{\bm X}=\conv\{\bm X^\top\bm z:\bm z\in\{0,1\}^m\}
=\bm X^\top[0,1]^m
=\sum_{S\in E}[\bm0,M_S\ind_S].\label{equ:support}
\end{equation}
Here $\bm X^\top[0,1]^m=\{\bm X^\top\bm z:\bm z\in[0,1]^m\}$ is the image of the unit cube $[0,1]^m$, $[\bm0,\bm a]=\{t\bm a:t\in[0,1]\}$ is the line segment from $\bm0$ to $\bm a$, and a sum of sets is the Minkowski sum $A+B=\{\bm a+\bm b:\bm a\in A,\ \bm b\in B\}$. The first equality holds because a linear map commutes with taking convex hulls. The second follows by grouping the experiments by candidate edge: the $M_S$ coordinates of $\bm z$ for experiments on $S$ contribute $(\sum_{j:S_j=S}z_j)\ind_S$, and this sum of coordinates ranges over $[0,M_S]$. Thus $C_{\bm X}$ is a sum of line segments, one for each candidate edge.
The exponential-family criterion states that a finite MLE exists if and only if $\bm d(\bm y)\in\ri C_{\bm X}$ \citep{bn78}. It is unique if in addition $\rank(\bm X)=n$, in which case relative interior is ordinary interior.

Let $\tilde y_S=\sum_{j:S_j=S}y_j$ be the number of successful experiments on $S\in E$, so that $\bm d(\bm y)=\sum_{S\in E}\tilde y_S\ind_S$. Define the normalized degree sequence and the corresponding polytope by
\begin{equation}
\tilde{\bm d}(\bm y)=\sum_{S\in E}\frac{\tilde y_S}{M_S}\ind_S,
\qquad Z(E)=\sum_{S\in E}[\bm0,\ind_S].\label{equ:normdegree}
\end{equation}
The polytope $Z(E)$ is obtained from $C_{\bm X}$ by rescaling the segment of each candidate edge $S$ to unit length, and it does not depend on the replication numbers. The normalized degree sequence lies in the relative interior of $Z(E)$ exactly when $\bm d(\bm y)$ lies in the relative interior of $C_{\bm X}$. Indeed, for any $\bm\alpha\in\reals^n$, the point $\bm d(\bm y)$ maximizes $\bm\alpha^\top\bm z$ over $\bm z\in C_{\bm X}$, and the point $\tilde{\bm d}(\bm y)$ maximizes $\bm\alpha^\top\bm z$ over $\bm z\in Z(E)$, in both cases if and only if $\tilde y_S=M_S$ for every $S$ with $\bm\alpha^\top\ind_S>0$ and $\tilde y_S=0$ for every $S$ with $\bm\alpha^\top\ind_S<0$. Hence $\bm d(\bm y)$ lies on a proper face of $C_{\bm X}$ if and only if $\tilde{\bm d}(\bm y)$ lies on a proper face of $Z(E)$. The replication numbers therefore change the shape of $C_{\bm X}$ but not this interior criterion. For complete graph designs, the normalized degree sequence and this criterion are due to \citet[Theorem~3.1]{rinaldo2013}, who prove it through the same representation of the two polytopes as sums of line segments; the argument applies unchanged to arbitrary designs.

For complete $k$-uniform designs, $Z(E)$ is the polytope $P_{n,k}$ of degree sequences of simple $k$-uniform hypergraphs. This follows by taking the convex hull of the incidence sums with coefficients in $\{0,1\}$. For graphs we write $P_n=P_{n,2}$. For incomplete designs the corresponding polytope is $Z(E)$, determined by the tested candidate edges.

\paragraph{Erd\H{o}s--Gallai inequalities}
A nonnegative integer sequence $d_1,\ldots,d_n$ is a simple graph degree sequence if and only if its sum is even and, for every nonempty $S\subseteq V$,
\begin{equation}
\sum_{v\in S}d_v\leq |S|(|S|-1)+\sum_{v\notin S}\min\{d_v,|S|\}.\label{equ:egcond}
\end{equation}
This is the Erd\H{o}s--Gallai characterization \citep{erdosgallai}. The polytope $P_n$ of graph degree sequences admits the linear inequality description \citep{PS89}
\begin{equation}
P_n=\bigl\{\bm d\in\reals^n:\ 0\leq d_v\leq n-1\hbox{ for all }v\in V,\ \ f(S,T,\bm d,n)\geq0\hbox{ for all }(S,T)\in\Omega\bigr\},\label{equ:eglinear}
\end{equation}
where
\begin{align}
f(S,T,\bm d,n)&=|S|(n-1-|T|)-\sum_{v\in S}d_v+\sum_{v\in T}d_v,\label{equ:fst}\\
\Omega&=\{(S,T):S,T\subseteq V,\ S\cap T=\emptyset,\ S\cup T\ne\emptyset\}.\label{equ:omega}
\end{align}
We use the full inequality family, which can include redundant inequalities. For $n\geq3$, membership in $\Int P_n$ is equivalent to strict satisfaction of these inequalities. This gives an explicit degree-based existence criterion for complete graph designs. The next section develops its finite-sample consequences and extends the degree-polytope approach to hypergraphs.

%% file: existence.tex
In this section, we provide sufficient conditions for MLE existence and uniqueness for designs that are fixed before the outcomes are observed. We first consider complete graph designs and then complete $k$-uniform hypergraph designs. Our results use the degree-polytope formulation introduced in Section~\ref{sec:model}; a further bound based on separating vectors quantifies the effect of sparse edge probabilities and applies to arbitrary designs.

\subsection{A degree-polytope condition for complete graph designs}\label{sec:existence-main}
We use two parameters: one describes the distance of the expected normalized degree sequence from the inequalities defining $P_n$, and the other describes the replication of experiments.
For $\bm d\in[0,n-1]^n$, let
\begin{equation}
\mathcal E(\bm d)=\frac{1}{n-1}\min\left\{
\min_v d_v,\ \min_v(n-1-d_v),\
\min_{(S,T)\in\Omega}\frac{f(S,T,\bm d,n)}{|S\cup T|}\right\}.\label{equ:margin}
\end{equation}
Thus $(n-1)\mathcal E(\bm d)$ is a normalized inequality margin, and $\bm d\in\Int P_n$ if and only if $\mathcal E(\bm d)>0$. For complete graph designs, define
\[
H_u(\bm M)=\frac{n-1}{\sum_{v\ne u}1/M_{u,v}},\qquad
H(\bm M)=\min_u H_u(\bm M).
\]
This is the least harmonic mean of the incident replication numbers. The normalized degree sequence has mean $\E[\tilde d_u]=\sum_{v\ne u}p_{u,v}(\bm\beta)$.

\begin{thm}\label{thm:mlesuff}
For a complete graph design with $n\geq3$, for any $c>1/2$, a unique MLE exists with probability at least $1-2/n^{2c-1}$ provided that
\begin{equation}
\mathcal E(\E[\tilde{\bm d}])\geq
\sqrt{\frac{c}{H(\bm M)}\frac{\log n}{n-1}}.\label{equ:edcond}
\end{equation}
For a balanced complete design, $H(\bm M)=r$ and $m=r\binom n2$, so the condition is equivalent to
\[
m\geq\frac{c}{2}\,\frac{1}{\mathcal E(\E[\tilde{\bm d}])^2}\,n\log n.
\]
\end{thm}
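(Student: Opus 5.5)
The plan is to combine the interior criterion from Section~\ref{sec:model} with a union bound over the inequalities describing $P_n$. By that criterion and the linear description~\eqref{equ:eglinear}, a unique MLE exists as soon as the normalized degree sequence $\tilde{\bm d}=\tilde{\bm d}(\bm Y)$ satisfies every inequality strictly. Here $Z(E)=P_n$ for a complete graph design, $n\geq3$, and full rank holds automatically at an interior point, since an interior point of $Z(E)$ requires full dimension. So it suffices to bound the probability that some inequality fails.

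To make that reduction concrete, note that $\tilde d_u=\sum_{v\neq u}\tilde Y_{uv}/M_{u,v}$, where $\tilde Y_{uv}\sim\mathrm{Bin}(M_{u,v},p_{uv})$ are independent across edges. Each constraint is linear in $\tilde{\bm d}$, so its deviation can be written as
\[
g(\tilde{\bm d})-g(\E\tilde{\bm d})=\sum_{\{u,v\}}a_{uv}\,\frac{\tilde Y_{uv}-M_{u,v}p_{uv}}{M_{u,v}}.
\]
Each term has bounded range, so Hoeffding's inequality applies.

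\textbf{Box constraints.} For $d_u\ge0$, the variable $\tilde d_u$ is a sum of $\sum_{v}M_{u,v}$ independent Bernoulli variables with weights $1/M_{u,v}$. Hoeffding gives
\[
\P\bigl[\tilde d_u\le \E\tilde d_u-t\bigr]\le\exp\Bigl(-2t^2\Big/\textstyle\sum_{v\ne u}1/M_{u,v}\Bigr)=\exp\bigl(-2t^2H_u/(n-1)\bigr).
\]
We take $t=(n-1)\mathcal E$. The condition~\eqref{equ:edcond} then gives an exponent of at least $2c\log n$, so each box constraint fails with probability at most $n^{-2c}$. A union bound over the $2n$ box constraints yields $2n^{1-2c}$, which matches the claimed probability.

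\textbf{Erd\H{o}s--Gallai constraints.} For $f(S,T,\cdot,n)$ the margin is at least $(n-1)\mathcal E\,|S\cup T|$. The key reduction is that if every box constraint holds with slack $(n-1)\mathcal E$ in the sense $|\tilde d_v-\E\tilde d_v|<(n-1)\mathcal E$ for all $v$, then
\[
|f(S,T,\tilde{\bm d})-f(S,T,\E\tilde{\bm d})|\le\sum_{v\in S\cup T}|\tilde d_v-\E\tilde d_v|<(n-1)\mathcal E\,|S\cup T|.
\]
This shows $f>0$ on the same event. The normalization by $|S\cup T|$ in~\eqref{equ:margin} is precisely what makes this work. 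Hence the whole event is controlled by the $2n$ one-sided per-vertex deviations $\tilde d_v-\E\tilde d_v\ge\pm(n-1)\mathcal E$, and no union over the exponentially many $(S,T)$ is needed. Each deviation is bounded by $n^{-2c}$ as above, for a total of $2n^{1-2c}$.

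\textbf{Balanced case.} This is direct: $H_u=r$, and the condition becomes $\mathcal E^2\ge c\log n/(r(n-1))$, that is $r\ge c\log n/((n-1)\mathcal E^2)$. Multiplying by $\binom n2=n(n-1)/2$ gives $m\ge \frac c2 \mathcal E^{-2}n\log n$.

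The main obstacle is the Erd\H{o}s--Gallai family, since a naive union bound over $\Omega$ would cost a factor $3^n$. The per-vertex triangle-inequality reduction above avoids this, and I expect it to be the crux. It also requires checking that a strict interior point of $P_n$ (for $n\ge3$) is equivalent to strict satisfaction of all inequalities, which is stated in Section~\ref{sec:model}.
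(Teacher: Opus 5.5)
Your proposal is correct and is essentially the paper's proof. The paper likewise uses that each $f(S,T,\cdot,n)$ is affine with coefficients $\pm1$ on $S\cup T$, so any inequality tight at $\tilde{\bm d}$ forces $\norm{\tilde{\bm d}-\E\tilde{\bm d}}_\infty\geq(n-1)\mathcal E(\E\tilde{\bm d})$; it then bounds this event by Hoeffding's inequality with $\sum_j c_j^2=(n-1)/H_u(\bm M)$ and a union bound over the $n$ vertices, which gives $2n^{1-2c}$. Your direct use of the good event rather than the contrapositive, and your remark that an interior point forces full column rank, are both fine.
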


Theorem~\ref{thm:mlesuff}, like Theorem~5.1 of \cite{rinaldo2013}, concerns complete graph designs. It extends their result, which assumes a common replication number, to unequal replication numbers, through the harmonic mean $H(\bm M)$; it does not extend to hypergraph designs, for the reason explained in Section~\ref{sec:complete-hyper}. The proof in Appendix~\ref{app:mlesuff} follows theirs and uses the Erd\H{o}s--Gallai degree-polytope inequalities. If any inequality is tight at $\tilde{\bm d}$, then $\norm{\tilde{\bm d}-\E\tilde{\bm d}}_\infty$ must be at least $(n-1)\mathcal E(\E\tilde{\bm d})$. Hoeffding's inequality bounds this deviation in terms of $H(\bm M)$.

\begin{cor}\label{thm:eps}
Assume $n\geq3$, a complete graph design with $r=\min_{u\ne v}M_{u,v}$, and $\epsilon\leq p_{u,v}(\bm\beta)\leq1-\epsilon$ for some $\epsilon\in(0,1/2]$. For any $c>1/2$, a unique MLE exists with probability at least $1-2/n^{2c-1}$ if
\[
r\geq16c\,\frac{1}{\epsilon^2}\,\frac{\log n}{n-1}.
\]
For balanced designs this is $m\geq8c\epsilon^{-2}n\log n$.
\end{cor}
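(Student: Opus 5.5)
We derive Corollary~\ref{thm:eps} from Theorem~\ref{thm:mlesuff}. Two things are needed: a lower bound on the margin $\mathcal E(\E[\tilde{\bm d}])$ in terms of $\epsilon$ alone, and a lower bound $H(\bm M)\geq r$. The second is immediate, since a harmonic mean is at least the smallest of its terms and every $M_{u,v}\geq r$. It therefore suffices to show that $\mathcal E(\bm\mu)\geq\epsilon/2$ for $\bm\mu=\E[\tilde{\bm d}]$, where $\mu_u=\sum_{v\ne u}p_{u,v}$. Indeed, condition~\eqref{equ:edcond} then reads $\epsilon^2/4\geq c\log n/(r(n-1))$, which is exactly $r\geq 4c\epsilon^{-2}\log n/(n-1)$. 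This is stronger than the stated bound with constant $16c$, so a margin bound as weak as $\epsilon/4$ would still suffice. For the balanced case, $m=r\binom n2=r\,n(n-1)/2$, and the required bound on $r$ becomes $m\geq 8c\epsilon^{-2}n\log n$.

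For the margin, the box constraints are direct. We have $\mu_u\geq(n-1)\epsilon$ and $n-1-\mu_u=\sum_{v\ne u}(1-p_{u,v})\geq(n-1)\epsilon$, so these terms contribute at least $\epsilon$ after dividing by $n-1$. The main step is the Erd\H{o}s--Gallai terms. We will show $f(S,T,\bm\mu,n)\geq \epsilon(n-1)|S\cup T|/2$ for every $(S,T)\in\Omega$. Write $s=|S|$ and $t=|T|$, and expand $\sum_{v\in S}\mu_v$ and $\sum_{v\in T}\mu_v$ pair by pair. Each pair inside $S$ is counted twice in $\sum_{v\in S}\mu_v$, and each pair inside $T$ twice in $\sum_{v\in T}\mu_v$. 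Pairs between $S$ and $T$ cancel between the two sums. Collecting terms,
\[
f=s(n-1-t)-2\sum_{\{u,v\}\subseteq S}p_{u,v}-\sum_{u\in S,w\notin S\cup T}p_{u,w}+2\sum_{\{u,v\}\subseteq T}p_{u,v}+\sum_{v\in T,w\notin S\cup T}p_{v,w}.
\]
Now split $s(n-1-t)=s(s-1)+s(n-s-t)$. The first piece pairs with the within-$S$ sum to give $2\sum_{\{u,v\}\subseteq S}(1-p_{u,v})$. The second pairs with the $S$-to-outside sum to give $\sum(1-p_{u,w})$. Hence $f$ is a sum of nonnegative terms, each at least $\epsilon$ times a count:
\[
f\geq\epsilon\bigl[s(s-1)+s(n-s-t)+t(t-1)+t(n-s-t)\bigr]=\epsilon\bigl[s(n-1-t)+t(n-1-s)\bigr].
\]

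It remains to show $s(n-1-t)+t(n-1-s)\geq (n-1)(s+t)/2$, that is, $(n-1)(s+t)/2\geq 2st$. Since $s+t\leq n$, we have $2st\leq (s+t)^2/2\leq n(s+t)/2$. This is off by replacing $n-1$ with $n$, which is the one spot needing care. If $s+t\leq n-1$, the bound holds directly. If $s+t=n$, then $st\leq n^2/4$, while we need $st\leq n(n-1)/4$. This fails only when $s=t=n/2$; in that case the left side equals $2s(s-1)=n(n-2)/2$, which is at least $(n-1)n/4$ when $n\geq3$. Thus the weaker bound $f\geq\epsilon(n-1)|S\cup T|/4$ holds in all cases, giving $\mathcal E(\bm\mu)\geq\epsilon/4$. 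Substituting into~\eqref{equ:edcond} with $H(\bm M)\geq r$ yields exactly $r\geq16c\epsilon^{-2}\log n/(n-1)$, which matches the stated constant. The probability bound $1-2/n^{2c-1}$ is inherited directly from Theorem~\ref{thm:mlesuff}.
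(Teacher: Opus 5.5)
Your route is the paper's: the same pair-by-pair expansion of $f(S,T,\bm\mu,n)$ into terms $1-p_{u,v}$ and $p_{u,v}$, the same count $f\geq\epsilon\bigl((s+t)(n-1)-2st\bigr)$, the same $\epsilon/4$ margin, and $H(\bm M)\geq r$. The one faulty step is the case analysis at the end.

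For $s+t=n$, the inequality $2st\leq n(n-1)/2$ that you need for the $\epsilon/2$ margin is equivalent to $(s-t)^2\geq n$. It therefore fails for every near-balanced split, not only for $s=t=n/2$. For example, $n=10$, $s=4$, $t=6$ gives $2st=48>45$. For odd $n$, where $s=t=n/2$ is impossible, $n=9$, $s=4$, $t=5$ gives $40>36$. As written, your argument never checks the $\epsilon/4$ bound for these splits, and for odd $n$ it wrongly concludes that the $\epsilon/2$ bound always holds. The $\epsilon/2$ margin is in fact false in general. Take $\beta_v=\tfrac12\log\frac{1-\epsilon}{\epsilon}$ on $S$ and the negative value on $T$, with $|S|=|T|=n/2$. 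Then $\mathcal E(\bm\mu)\leq\epsilon(n-2)/(2(n-1))<\epsilon/2$. So the opening claim $f\geq\epsilon(n-1)|S\cup T|/2$ cannot be proved.

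The repair is immediate, and your computation at $s=t=n/2$ is already the worst case. For $s+t=n$ and any split, $2st\leq n^2/2$. Hence $(s+t)(n-1)-2st\geq n(n-2)/2\geq n(n-1)/4$ for $n\geq3$. Alternatively, as in the paper, use $2st/(s+t)\leq(s+t)/2\leq n/2$ uniformly. This gives $f/(s+t)\geq\epsilon(n/2-1)\geq\epsilon(n-1)/4$ with no case split. With that change the proof is complete and yields exactly the stated constant $16c$ and the balanced form $m\geq8c\epsilon^{-2}n\log n$.
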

The proof in Appendix~\ref{app:mlesuff} shows that $\mathcal E(\E\tilde{\bm d})\geq\epsilon/4$. Since $H(\bm M)\geq r$, the stated condition on $r$ then implies~\eqref{equ:edcond}, and Theorem~\ref{thm:mlesuff} applies. The bound on $\mathcal E(\E\tilde{\bm d})$ holds because every pair $\{u,v\}$ contributes at least $\epsilon$ to each inequality margin. The coordinate margins $\E \tilde d_v$ and $n-1-\E \tilde d_v$ are sums of $n-1$ terms $p_{u,v}$ and $1-p_{u,v}$, respectively. The mixed margin $f(S,T,\E\tilde{\bm d},n)$ is a sum of terms $1-p_{u,v}$ over pairs meeting $S$ but not $T$, and terms $p_{u,v}$ over pairs meeting $T$ but not $S$, with pairs inside $S$ or inside $T$ counted twice. Counting these pairs gives $f(S,T,\E\tilde{\bm d},n)\geq\epsilon((n-1)|S\cup T|-2|S||T|)$, which is at least $\epsilon(n-1)|S\cup T|/4$ for $n\geq3$.

\subsection{Interior representations for complete hypergraph designs}\label{sec:complete-hyper}
We consider complete $k$-uniform hypergraph experiments. The degree-polytope formulation continues to hold, with $P_n$ replaced by $P_{n,k}$. For $k\geq3$, however, no complete linear description of $P_{n,k}$ is known. \cite{MurthySrinivasan2002} give partial information on its facets. In particular, by their Corollary~3.5 and Example~3.3(ii), every $\bm d\in P_{n,k}$ satisfies, for all disjoint $S,T\subseteq V$,
\begin{equation}
\sum_{v\in S}d_v-(k-1)\sum_{v\in T}d_v\leq|S|\binom{n-1-|T|}{k-1},\label{equ:ms-ineq}
\end{equation}
and the right-hand side is the maximum of the left-hand side over $P_{n,k}$. For $k=2$, \eqref{equ:ms-ineq} is the Erd\H{o}s--Gallai inequality $f(S,T,\bm d,n)\geq0$ in~\eqref{equ:fst}, and \citet[Theorem~3.12]{MurthySrinivasan2002} show that, for $k\geq3$, many of the inequalities~\eqref{equ:ms-ineq} define facets of $P_{n,k}$. Since these inequalities do not describe $P_{n,k}$, they do not yield an analog of Theorem~\ref{thm:mlesuff}, and we use a different argument.

To obtain sufficient existence conditions, we use a description of the interior of the convex support. Since $C_{\bm X}=\bm X^\top[0,1]^m$ by~\eqref{equ:support}, and a linear map takes the relative interior of a convex set onto the relative interior of its image, $\bm d(\bm y)\in\ri C_{\bm X}$ if and only if
\[
\bm d(\bm y)=\bm X^\top\bm w\qquad\hbox{for some }\bm w\in(0,1)^m.
\]
We call such $\bm w$ an interior representation of $\bm d(\bm y)$. For complete graph designs, this description of the interior is given by \citet[Lemma~3.2]{rinaldo2013}. It expresses the observed degrees as the degrees of a hypergraph whose edge weights all lie strictly between $0$ and $1$, whereas the outcome vector $\bm y$ itself gives a representation with weights in $\{0,1\}$. By the criterion of Section~\ref{sec:model}, a finite MLE exists if and only if $\bm d(\bm y)\in\ri C_{\bm X}$, that is, if and only if an interior representation exists. By Lemma~\ref{lem:sc}, it is unique if, in addition, $\bm X$ has full column rank. For complete designs, this rank condition always holds.
\begin{lm}\label{lem:complete-rank}
For $2\leq k<n$, every complete $k$-uniform design has full column rank.
\end{lm}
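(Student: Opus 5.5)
Since the rows of a complete $k$-uniform design include every $k$-subset of $V$ (replications only repeat rows), it suffices to show that the $\binom nk\times n$ incidence matrix of all $k$-subsets has rank $n$. By Lemma~\ref{lem:sc}, this is equivalent to $\lambda_1(\bm M)>0$, so I would compute $\bm M$ explicitly for the balanced design with one experiment per $k$-subset. For general replication numbers, $\bm M$ is bounded below in the positive semidefinite order by $r$ times this matrix, with $r\geq1$, so positivity transfers.

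For one experiment per $k$-subset, $M_{u,u}=\binom{n-1}{k-1}$ and $M_{u,v}=\binom{n-2}{k-2}$ for $u\neq v$. Hence
\[
\bm M=\left(\binom{n-1}{k-1}-\binom{n-2}{k-2}\right)\bm I+\binom{n-2}{k-2}\ind\ind^\top
=\binom{n-2}{k-1}\bm I+\binom{n-2}{k-2}\ind\ind^\top,
\]
using Pascal's rule. The eigenvalues of $\bm M$ are therefore $\binom{n-2}{k-1}$, with multiplicity $n-1$ on the vectors orthogonal to $\ind$, and $\binom{n-2}{k-1}+n\binom{n-2}{k-2}$ on $\ind$. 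Since $k<n$ gives $k-1\leq n-2$, we have $\binom{n-2}{k-1}\geq1>0$, so $\lambda_1(\bm M)>0$ and $\bm X$ has full column rank.

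There is no real obstacle. The only points to check are the Pascal identity and that the condition $k<n$ is exactly what keeps $\binom{n-2}{k-1}$ positive. For $k=n$ the design has a single candidate edge, and the rank drops to $1$. As an alternative check, the differences of the rows for $S\cup\{u\}$ and $S\cup\{v\}$, for a $(k-1)$-set $S$ avoiding $u$ and $v$, give $\bm e_u-\bm e_v$. Together with any single row, which has nonzero coordinate sum $k$, these span $\reals^n$.
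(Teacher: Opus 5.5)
Your proof is correct and follows the paper's argument. You reduce to one experiment per $k$-set because replication only repeats rows, write the correlation matrix as $\binom{n-2}{k-1}\bm I+\binom{n-2}{k-2}\bm J$ via Pascal's rule, and conclude from $\binom{n-2}{k-1}>0$ for $k\leq n-1$; the paper reads off positive definiteness from the quadratic form $\binom{n-2}{k-1}\norm{\bm v}^2+\binom{n-2}{k-2}(\bm 1^\top\bm v)^2$ rather than from the eigenvalues, which amounts to the same thing.
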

The proof in Appendix~\ref{app:hyper} computes the correlation matrix of one experiment on every $k$-set and shows that it is positive definite. Hence, for a complete design, an interior representation of $\bm d(\bm y)$ implies that a unique MLE exists. Below, $\binom{n-1}{k-1}$ is the number of $k$-subsets of $V$ that contain a given vertex, and $\binom nk=\frac nk\binom{n-1}{k-1}$.

\begin{pro}\label{pro:projection-complete}
For a complete $k$-uniform design with $2\leq k<n$ and replication number at least $r$ for every candidate edge, suppose $\epsilon\leq p_S(\bm\beta)\leq1-\epsilon$ for all $k$-sets $S$, where $\epsilon\in(0,1/2]$. Then
\begin{equation}
\P[\text{no unique finite MLE}]
\leq2\binom nk\exp\left(-\frac{2r\epsilon^2}{k}\binom{n-1}{k-1}\right).\label{equ:projection-bound}
\end{equation}
In particular, for any $c>0$, success probability at least $1-n^{-c}$ is ensured by
\begin{equation}
r\geq\frac{1}{2}\,\frac{1}{\epsilon^2}\,\frac{k\bigl(\log\bigl(2\binom nk\bigr)+c\log n\bigr)}{\binom{n-1}{k-1}}.\label{equ:projection-condition}
\end{equation}
\end{pro}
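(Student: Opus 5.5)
The plan is to build an interior representation directly. I will work with the normalized degree sequence and the polytope $Z(E)$ of~\eqref{equ:normdegree} rather than with $C_{\bm X}$. By the discussion in Section~\ref{sec:model}, $\tilde{\bm d}(\bm y)\in\ri Z(E)$ if and only if $\bm d(\bm y)\in\ri C_{\bm X}$. Since $Z(E)=\bm X_0^\top[0,1]^{E}$, where $\bm X_0$ is the design matrix with one experiment on every $k$-set, it suffices to find weights $w_S\in(0,1)$, $S\in E$, with $\tilde{\bm d}(\bm y)=\sum_S w_S\ind_S$. Uniqueness then follows from Lemma~\ref{lem:complete-rank}.

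Write $e_S=\tilde y_S/M_S-p_S$ and $\bm\delta=\tilde{\bm d}(\bm y)-\E\tilde{\bm d}=\bm X_0^\top\bm e$. I will take the least-norm correction
\[
w_S=p_S+c_S,\qquad \bm c=\bm X_0\bm M_0^{-1}\bm\delta,\qquad \bm M_0=\bm X_0^\top\bm X_0 .
\]
Then $\bm X_0^\top\bm w=\E\tilde{\bm d}+\bm\delta=\tilde{\bm d}(\bm y)$. Since $p_S\in[\epsilon,1-\epsilon]$, we get $w_S\in(0,1)$ whenever $|c_S|<\epsilon$ for all $S$. A union bound over the $\binom nk$ sets $S$ then explains the prefactor $2\binom nk$ in~\eqref{equ:projection-bound}.

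To bound a single $c_S$, I will use the explicit form of $\bm M_0$. It equals $A\bm I+B\bm J$ with $A=\binom{n-2}{k-1}$ and $B=\binom{n-2}{k-2}$, so
\[
\bm M_0^{-1}=A^{-1}\bigl(\bm I-\tfrac{B}{A+nB}\bm J\bigr),\qquad A+nB=k\binom{n-1}{k-1}.
\]
Now $c_S=\bm\gamma^\top\bm e$ with $\bm\gamma=\bm X_0\bm M_0^{-1}\ind_S$, a linear combination of the independent centred averages $e_T$. Each $e_T$ is a mean of $M_T\geq r$ Bernoulli variables, so Hoeffding's inequality gives
\[
\P[|c_S|\geq\epsilon]\leq2\exp\bigl(-2\epsilon^2r/\norm{\bm\gamma}^2\bigr).
\]
The key computation is
\[
\norm{\bm\gamma}^2=\ind_S^\top\bm M_0^{-1}\ind_S=\frac{k}{A}\Bigl(1-\frac{kB}{A+nB}\Bigr)=\frac kA\cdot\frac{n-k}{n-1}=\frac{k}{\binom{n-1}{k-1}} .
\]
Substituting this gives~\eqref{equ:projection-bound}, and~\eqref{equ:projection-condition} follows by solving for $r$.

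I expect the main obstacle to be doing the Hoeffding step correctly with unequal replication numbers. The $M_T$ Bernoulli terms in $e_T$ each have range $\gamma_T/M_T$, so the sum of squared ranges is $\sum_T\gamma_T^2/M_T\leq\norm{\bm\gamma}^2/r$, and this is what gives the bound above. Beyond that, the rest is binomial identity bookkeeping, namely $\binom{n-2}{k-1}(n-1)=\binom{n-1}{k-1}(n-k)$ and $kB/(A+nB)=(k-1)/(n-1)$.
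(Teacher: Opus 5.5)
Your proof is correct and follows essentially the paper's argument: correct the vector of success probabilities by the least-norm (projection) term, bound each coordinate by Hoeffding's inequality with variance proxy given by the leverage $k/\binom{n-1}{k-1}$, and take a union bound over the $\binom nk$ candidate edges. The differences are only in execution: you work with candidate edges, the normalized degrees and the unreplicated Gram matrix $\bm G_{n,k}$, so replication enters through the Hoeffding ranges $\gamma_T/M_T$ rather than through $\bm M\succeq r\bm G_{n,k}$ and the monotonicity of the inverse, and you compute the leverage by inverting $A\bm I+B\bm J$ explicitly rather than by symmetry and the trace identity (for balanced designs the two corrections coincide).
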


The proof, given in Appendix~\ref{app:hyper}, constructs an interior representation explicitly. Let $p_j=p_{S_j}(\bm\beta)$ be the success probability of experiment $j$, and let $\bm p=(p_1,\ldots,p_m)$. Since $\bm X^\top\bm p=\E[\bm d(\bm Y)]$, the vector $\bm p$ represents the expected degrees, with weights in $[\epsilon,1-\epsilon]$. To represent the observed degrees instead, we add the correction
\[
\bm q=\bm X\bm M^{-1}\bm X^\top(\bm y-\bm p),
\]
which is the vector of least Euclidean norm satisfying $\bm X^\top\bm q=\bm d(\bm y)-\bm X^\top\bm p$. Then $\bm w=\bm p+\bm q$ satisfies $\bm X^\top\bm w=\bm d(\bm y)$. If $\norm{\bm q}_\infty<\epsilon$, every coefficient of $\bm w$ lies in $(0,1)$, so $\bm w$ is an interior representation of $\bm d(\bm y)$ and a unique MLE exists. Appendix~\ref{app:hyper} also verifies this directly by showing that the negative log-likelihood is coercive, that is, tends to infinity as $\norm{\bm\beta}\to\infty$.

It remains to bound the probability that $|q_j|\geq\epsilon$ for some $j$. Each $q_j$ is a linear combination of the independent centered outcomes $y_l-p_l$, and its squared coefficients sum to $h_j=\bm x_j^\top\bm M^{-1}\bm x_j$. Hoeffding's inequality therefore gives $\P[|q_j|\geq\epsilon]\leq2\exp(-2\epsilon^2/h_j)$. For a complete design with one experiment on every $k$-set, the values $h_j$ are equal by symmetry and sum to $n$, so each equals $n/\binom nk=k/\binom{n-1}{k-1}$. Replication number at least $r$ for every candidate edge reduces them to at most $k/(r\binom{n-1}{k-1})$. Experiments on the same candidate edge have the same value of $q_j$, and a union bound over the $\binom nk$ candidate edges gives~\eqref{equ:projection-bound}.

We now discuss the right-hand side of~\eqref{equ:projection-condition}, starting with graphs. For $k=2$, we have $\binom{n-1}{k-1}=\binom{n-1}{1}=n-1$ and $\log(2\binom n2)\leq2\log n$, so~\eqref{equ:projection-condition} is implied by
\[
r\geq(c+2)\,\frac{1}{\epsilon^2}\,\frac{\log n}{n-1}.
\]
This matches the following heuristic. In a balanced design, each vertex takes part in $r(n-1)$ experiments, and its expected degree is at distance at least $\epsilon r(n-1)$ from the bounds $0$ and $r(n-1)$. Uniformly over the vertices, the observed degrees deviate from their means by order $\sqrt{r(n-1)\log n}$. The condition requires these deviations to be small compared with the distance $\epsilon r(n-1)$. Corollary~\ref{thm:eps} gives the same form with a larger constant: $r\geq8(c+1)\epsilon^{-2}\log n/(n-1)$ for success probability at least $1-2n^{-c}$. Corollary~\ref{cor:graph-sharp} below improves the factor $\epsilon^{-2}$ to $\epsilon^{-1}$.

For fixed $k\geq2$, we have $\binom{n-1}{k-1}=n^{k-1}(1+o(1))/(k-1)!$ and $\log(2\binom nk)=k\log n(1+o(1))$. The right-hand side of~\eqref{equ:projection-condition} is therefore
\[
\frac{1}{2}\,\frac{1}{\epsilon^2}\,\frac{(k+c)\,k!\,\log n}{n^{k-1}}(1+o(1)).
\]
The heuristic is the same as for graphs, with each vertex taking part in $r\binom{n-1}{k-1}$ experiments. The right-hand side decreases with $n$ and falls below one once $\epsilon^2n^{k-1}/\log n$ is large enough. In that regime, a single experiment on every candidate edge suffices. Bounding the binomial coefficients explicitly gives the following condition for this case.
\Needspace{6\baselineskip}
\begin{cor}\label{cor:hyper}
For a complete $k$-uniform design with $k\geq2$ and $n>4k$, and parameter $\bm\beta$ such that $\epsilon\leq p_S(\bm\beta)\leq1-\epsilon$ for every $k$-set $S$, for some $\epsilon\in(0,1/2]$, a unique MLE exists with probability at least $1-n^{-c}$, for any $c>0$, provided that
\begin{equation}
\epsilon\geq\sqrt{2\bigl((c+1)\log n+\log4\bigr)}\left(\frac{k}{n}\right)^{(k-1)/2}.\label{equ:epscond}
\end{equation}
\end{cor}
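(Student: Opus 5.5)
We apply Proposition~\ref{pro:projection-complete} with $r=1$: every candidate edge has replication number at least one, so
\[
\P[\text{no unique finite MLE}]\leq 2\binom nk\exp\left(-\frac{2\epsilon^2}{k}\binom{n-1}{k-1}\right).
\]
It then suffices to show that~\eqref{equ:epscond} makes the right-hand side at most $n^{-c}$. Taking logarithms, this amounts to
\[
\frac{2\epsilon^2}{k}\binom{n-1}{k-1}\geq \log 2+\log\binom nk+c\log n .
\]
We reduce this to a clean sufficient condition by bounding the binomial coefficients explicitly.

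\emph{Upper bound on the count.} We use $\binom nk\leq n^k/k!$, so that $\log(2\binom nk)\leq \log 2+k\log n-\log k!$. We want this bounded by a multiple of $\log n$ plus a constant compatible with the right-hand side $(c+1)\log n+\log 4$ in~\eqref{equ:epscond}, after dividing by the lower bound on $\binom{n-1}{k-1}/k$. The natural pairing is to combine the factor $k$ in $\log\binom nk$ with the factor $1/k$ in front of the exponent.

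\emph{Lower bound on the degree count.} We use $\binom{n-1}{k-1}\geq\bigl((n-1)/(k-1)\bigr)^{k-1}$, or more conveniently
\[
\frac{1}{k}\binom{n-1}{k-1}=\frac{1}{n}\binom nk\geq \frac1n\left(\frac nk\right)^k=\frac1k\left(\frac nk\right)^{k-1}.
\]
With this, the sufficient condition becomes
\[
2\epsilon^2\frac{1}{k}\left(\frac nk\right)^{k-1}\geq \log 2+\log\binom nk+c\log n .
\]
Condition~\eqref{equ:epscond} gives $2\epsilon^2(n/k)^{k-1}\geq 4\bigl((c+1)\log n+\log 4\bigr)$. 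The extra factor $1/k$ must therefore be absorbed, which means showing
\[
\log\binom nk+\log2+c\log n\leq \frac{4}{k}\bigl((c+1)\log n+\log4\bigr).
\]
This fails for large $k$, so the crude bound loses too much and a sharper lower bound is needed.

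\emph{Main obstacle.} The plan is to compare $\binom{n-1}{k-1}$ with $\binom nk$ itself, since $\log\binom nk$ is of order $k\log(ne/k)$. Concretely, we would use
\[
\binom{n-1}{k-1}\geq\left(\frac{n-1}{k-1}\right)^{k-1}\geq \left(\frac nk\right)^{k-1}
\]
(valid since $k<n$) and require
\[
\frac{2\epsilon^2}{k}\left(\frac nk\right)^{k-1}\geq k\log n+\log 2+c\log n .
\]
The hypothesis $n>4k$ supplies the slack: it gives $(n/k)^{k-1}\geq 4^{k-1}$. This should let us trade a factor $k$ for the geometric growth. Specifically, writing $(n/k)^{k-1}=(n/k)^{(k-1)/2}(n/k)^{(k-1)/2}$, we would use~\eqref{equ:epscond} in the equivalent form
\[
\epsilon^2\left(\frac nk\right)^{k-1}\geq 2\bigl((c+1)\log n+\log 4\bigr).
\]
We would then need an additional factor of about $k^2/4$, which should follow from $k$-dependence hidden in the exact normalization. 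I expect the bulk of the work to be choosing the right pairing of binomial estimates (possibly $\binom nk\leq (en/k)^k$ and $\binom{n-1}{k-1}\geq (n/k)^{k-1}$, with $n>4k$ used to control $\log(e n/k)$ against $(n/k)^{(k-1)/2}$) so that the stated constants $(c+1)$ and $\log 4$ come out exactly. If the constants do not close, I would revisit whether the intended use is instead the per-vertex degree union bound, i.e.\ $n$ events rather than $\binom nk$, which gives precisely $\log(2n)+c\log n\leq(c+1)\log n+\log 2$.
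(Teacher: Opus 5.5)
Your reduction is the same as the paper's: Proposition~\ref{pro:projection-complete} with $r=1$ leaves you to show $\frac{2\epsilon^2}{k}\binom{n-1}{k-1}\geq\log\bigl(2\binom nk\bigr)+c\log n$. Since \eqref{equ:epscond} says exactly $\epsilon^2\geq2A(k/n)^{k-1}$ with $A=(c+1)\log n+\log4$, you also diagnose correctly that an extra factor $k^2/4$ is missing, that is,
\[
\binom{n-1}{k-1}\geq\frac{k^2}{4}\left(\frac nk\right)^{k-1}\qquad(k\geq2,\ n>4k).
\]
This inequality is the whole content of the corollary beyond the proposition, and your proposal does not prove it. ``Should follow from $k$-dependence hidden in the exact normalization'' is not an argument, and you end by doubting whether the constants close.

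None of the tools you list can supply this factor:
\begin{itemize}
\item The bound $\bigl((n-1)/(k-1)\bigr)^{k-1}$ exceeds $(n/k)^{k-1}$ by the factor $\bigl(k/(k-1)\bigr)^{k-1}\bigl((n-1)/n\bigr)^{k-1}<e$, which cannot produce $k^2/4$.
\item The remark $(n/k)^{k-1}\geq4^{k-1}$ gives no slack, because \eqref{equ:epscond} cancels the factor $(n/k)^{k-1}$ exactly.
\item A sharper upper bound on $\binom nk$ does not help. We have $\log\binom nk\geq k\log(n/k)$, which is about $k\log n$ when $n$ is large compared with $k$, so the left side must be of order $(k+c)\log n$.
\item The fallback to $n$ per-vertex events is not available. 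The interior representation $\bm w=\bm p+\bm q$ requires $|q_j|<\epsilon$ for every candidate edge, so the union bound in Proposition~\ref{pro:projection-complete} necessarily runs over $\binom nk$ events.
\end{itemize}

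The missing step is where $n>4k$ actually enters. Every factor in $\binom{n-1}{k-1}=\prod_{i=1}^{k-1}(n-i)/(k-1)!$ is at least $n-k+1\geq3n/4$. Hence $\binom{n-1}{k-1}\geq R_k(n/k)^{k-1}$ with $R_k=(3k/4)^{k-1}/(k-1)!$. The gain over $(n/k)^{k-1}$ is $(3/4)^{k-1}k^{k-1}/(k-1)!$, which grows exponentially in $k$.

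One then checks $R_k\geq k^2/4$. Directly, $R_2=3/2$ and $R_3=81/32$. For $k\geq3$, $R_{k+1}/R_k=\frac34(1+1/k)^k\geq(1+1/k)^2$, by Bernoulli's inequality $(1+1/k)^{k-2}\geq2-2/k$; induction from $k=3$ gives the claim.

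With this bound, $\frac{2\epsilon^2}{k}\binom{n-1}{k-1}\geq kA\geq(k+c)\log n\geq\log\bigl(2\binom nk\bigr)+c\log n$. The last step uses $\binom nk\leq n^k/k!\leq n^k/2$. The corollary follows.
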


Corollary~\ref{cor:hyper} follows from Proposition~\ref{pro:projection-complete} with $r=1$, as shown in Appendix~\ref{app:hyper-constants}. It holds for every replication number, but it does not benefit from replication. For fixed $k$, the right-hand side of~\eqref{equ:epscond} is of order $\sqrt{\log n/n^{k-1}}$, the same order as obtained by solving~\eqref{equ:projection-condition} for $\epsilon$ with $r=1$. Under $n\geq2k$, the following separating-vector argument improves the factor $\epsilon^{-2}$ in~\eqref{equ:projection-condition} to $\epsilon^{-1}$. Proposition~\ref{pro:projection-complete} also applies when $n/2<k<n$.

\subsection{Sharper conditions via separating vectors}\label{sec:separation}
The conditions of Sections~\ref{sec:existence-main} and~\ref{sec:complete-hyper} are obtained by concentration. They require the observed degrees to stay within a margin proportional to $\epsilon$ of their means, and they lead to conditions of the form $r\geq C_k\epsilon^{-2}\log n/n^{k-1}$ for complete designs. In this section, we derive conditions that are sharper in their dependence on the edge-probability margin $\epsilon$. For complete designs with $n\geq2k$, they require $r\geq C_k\epsilon^{-1}\log n/n^{k-1}$, with $\epsilon^{-1}$ in place of $\epsilon^{-2}$. The improvement matters when some edge probabilities are close to zero or one. It comes from computing the probabilities of the events on which the MLE fails to exist, rather than bounding deviations of the degrees from their means; we explain this after Theorem~\ref{thm:hyper-sharp}. We also give a necessary condition showing that, for fixed $k$, this dependence on $\epsilon$, $n$, and $r$ cannot be improved in general. The argument works directly with the separating vectors of Definition~\ref{def:overlap}, and its first step applies to arbitrary designs.

For disjoint $P,N\subseteq V$, not both empty, put $Z_0=V\setminus(P\cup N)$ and define
\begin{equation}
W(P,N)=\sum_{\substack{S\in E:S\subseteq P\cup Z_0\\S\cap P\ne\emptyset}}M_S
+\sum_{\substack{S\in E:S\subseteq N\cup Z_0\\S\cap N\ne\emptyset}}M_S.\label{equ:W}
\end{equation}
This counts the experiments on groups that meet one of the two sets and avoid the other. Figure~\ref{fig:separation} illustrates the three sets and the classification of the groups for a $3$-uniform design.
\begin{figure}[t]
\centering
\input{fig-separation}
\caption{The sign pattern of a separating vector $\bm\alpha$ for a $3$-uniform design, with groups drawn as ellipses. The vertices are split into $P$, $Z_0$, and $N$ according to the sign of $\alpha_v$. An experiment on a group that meets $P$ and avoids $N$ must succeed (solid outline), and an experiment on a group that meets $N$ and avoids $P$ must fail (dashed outline); these are the experiments counted by $W(P,N)$. For the other groups (dotted outline), the outcome is not determined by the signs alone: for a group that meets both $P$ and $N$, it depends on the values of $\bm\alpha$, and a group inside $Z_0$ imposes no constraint.}
\label{fig:separation}
\end{figure}

\begin{lm}\label{lm:certificate}
If a separating vector $\bm\alpha\ne\bm0$ exists, let $P=\{v:\alpha_v>0\}$ and $N=\{v:\alpha_v<0\}$. Every experiment in the first sum of~\eqref{equ:W} has outcome $1$, and every experiment in the second sum has outcome $0$.
\end{lm}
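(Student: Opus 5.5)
The argument is a direct sign computation, using only Definition~\ref{def:overlap} and the definition of $W(P,N)$ in~\eqref{equ:W}. First, a word on indexing. The sums in~\eqref{equ:W} run over candidate edges $S\in E$ weighted by $M_S$. I read ``every experiment in the first sum'' as every experiment $j$ with $S_j=S$ for some $S$ counted in that sum, and similarly for the second sum. It therefore suffices to fix an experiment $j$ and a group $S=S_j$ of the relevant type, and to determine the sign of $\bm x_j^\top\bm\alpha$.

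For the first sum, take $S\subseteq P\cup Z_0$ with $S\cap P\ne\emptyset$. Then
\[
\bm x_j^\top\bm\alpha=\sum_{v\in S}\alpha_v=\sum_{v\in S\cap P}\alpha_v+\sum_{v\in S\cap Z_0}\alpha_v .
\]
Every $\alpha_v$ with $v\in Z_0$ is zero, and every $\alpha_v$ with $v\in P$ is strictly positive. Since $S\cap P$ is nonempty, this gives $\bm x_j^\top\bm\alpha>0$.

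The separating condition says $\bm x_j^\top\bm\alpha\le0$ whenever $y_j=0$. Taking the contrapositive, a strictly positive inner product forces $y_j=1$.

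The second sum is symmetric. For $S\subseteq N\cup Z_0$ with $S\cap N\ne\emptyset$, the same computation gives $\bm x_j^\top\bm\alpha<0$. The requirement $\bm x_j^\top\bm\alpha\ge0$ whenever $y_j=1$ then forces $y_j=0$.

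The two sums in~\eqref{equ:W} never count the same group. A group in both would have to meet $P$ while lying in $N\cup Z_0$, which is impossible because $P$ and $N$ are disjoint. The conclusions are therefore consistent.

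I should also check that $P$ and $N$ are not both empty, so that $W(P,N)$ is defined as in~\eqref{equ:W}. This follows from $\bm\alpha\ne\bm0$.

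I expect no real obstacle. The only point needing care is strictness: a group with $S\cap P\ne\emptyset$ gives a strict inequality, whereas groups inside $Z_0$ or groups meeting both $P$ and $N$ give no strict inequality. This is exactly why those groups are left out of $W(P,N)$, as Figure~\ref{fig:separation} indicates.
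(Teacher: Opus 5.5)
Your proposal is correct and follows the paper's proof: for a group in the first family, the inner product $\bm x_j^\top\bm\alpha$ reduces to $\sum_{v\in S_j\cap P}\alpha_v>0$, which forces $y_j=1$ under the separating condition, and symmetrically for the second family. Your extra checks, that the two families are disjoint and that $P\cup N\ne\emptyset$, are correct and are also used in the paper's proof of Proposition~\ref{pro:master}.
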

The proof is in Appendix~\ref{app:separation}. The lemma reduces a continuum of separating vectors to finitely many outcome events, one for each pair $(P,N)$.

\begin{pro}\label{pro:master}
For any fixed design with $\epsilon\leq p_S(\bm\beta)\leq1-\epsilon$ on every tested group,
\begin{equation}
\P[\text{no unique finite MLE}]
\leq\sum_{\substack{P,N\subseteq V:P\cap N=\emptyset\\P\cup N\ne\emptyset}}
(1-\epsilon)^{W(P,N)}.\label{equ:master}
\end{equation}
For a random design such that, conditional on the design, the outcomes are independent with success probabilities $p_{S_j}(\bm\beta)$, the same bound holds with each summand replaced by $\E[(1-\epsilon)^{W(P,N)}]$.
\end{pro}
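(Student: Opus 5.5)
The plan is a union bound over the sign patterns of separating vectors. By Proposition~\ref{thm:mle-overlap}, if there is no unique finite MLE, then $(\bm X,\bm y)$ has a separating vector $\bm\alpha\ne\bm0$. Let $P=\{v:\alpha_v>0\}$ and $N=\{v:\alpha_v<0\}$. These sets are disjoint, and not both empty because $\bm\alpha\ne\bm0$. So the event $\{\text{no unique finite MLE}\}$ is contained in the union of the events $A(P,N)$, over all such pairs. Here $A(P,N)$ is the event that a separating vector with sign pattern $(P,N)$ exists. By Lemma~\ref{lm:certificate}, $A(P,N)$ is contained in the event $B(P,N)$ defined as follows: every experiment on a group $S$ with $S\subseteq P\cup Z_0$ and $S\cap P\ne\emptyset$ has outcome $1$, and every experiment on a group with $S\subseteq N\cup Z_0$ and $S\cap N\ne\emptyset$ has outcome $0$.

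Next, for fixed $(P,N)$, I bound $\P[B(P,N)]$. The two families of groups in~\eqref{equ:W} are disjoint. A group in both would lie in $Z_0$ while also meeting $P$, which is impossible. So $B(P,N)$ prescribes the outcomes of exactly $W(P,N)$ distinct experiments, counted with replication. By independence of the outcomes, $\P[B(P,N)]$ is a product of $W(P,N)$ factors. Each factor is either $p_{S_j}(\bm\beta)\leq1-\epsilon$ or $1-p_{S_j}(\bm\beta)\leq1-\epsilon$, by the margin assumption on every tested group. Hence $\P[B(P,N)]\leq(1-\epsilon)^{W(P,N)}$, and the union bound gives~\eqref{equ:master}.

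For random designs, I condition on the design $S_1,\ldots,S_m$. The argument above then applies verbatim, because it uses only conditional independence and the margin. It gives the conditional bound $\sum_{(P,N)}(1-\epsilon)^{W(P,N)}$, where $W$ is now a random variable determined by the design. Taking expectations and using linearity yields the stated bound with $\E[(1-\epsilon)^{W(P,N)}]$.

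The only step needing care is Lemma~\ref{lm:certificate}, which is assumed here. Its proof is the observation that for $S$ meeting $P$ and avoiding $N$, we have $\bm x_j^\top\bm\alpha>0$, which forces $y_j=1$; the case for $N$ is symmetric. The rest is bookkeeping, namely checking that the counted experiments are distinct so that the exponent is exactly $W(P,N)$.
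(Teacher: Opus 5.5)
Your proposal is correct and follows the paper's proof essentially step for step. Both arguments contain the failure event in the union of prescribed-outcome events via Proposition~\ref{thm:mle-overlap} and Lemma~\ref{lm:certificate}, check that the two families in~\eqref{equ:W} are disjoint so that each event fixes exactly $W(P,N)$ independent outcomes with factors at most $1-\epsilon$, and handle random designs by conditioning on the design and then taking expectations.
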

The proof in Appendix~\ref{app:separation} combines Proposition~\ref{thm:mle-overlap} and Lemma~\ref{lm:certificate} with a union bound over sign patterns. It applies to complete and incomplete designs.

\Needspace{9\baselineskip}
\begin{thm}\label{thm:hyper-sharp}
Let $n\geq2k$, $k\geq2$, and consider complete $k$-uniform experiments with replication number at least $r$ for every candidate edge. Suppose $\epsilon\leq p_S(\bm\beta)\leq1-\epsilon$ for all $k$-sets $S$. For any $a>0$, a unique MLE exists with probability at least $1-n^{-a}$ if
\begin{equation}
r\geq\frac{1}{\epsilon}\,\frac{k\bigl((1+a)\log n+3\bigr)}{\binom{\lfloor n/2\rfloor-1}{k-1}}.\label{equ:hyper-sharp}
\end{equation}
\end{thm}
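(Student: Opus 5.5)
The plan is to apply Proposition~\ref{pro:master} and show that every summand $(1-\epsilon)^{W(P,N)}$ is small, with a lower bound on $W(P,N)$ that grows linearly in the number $s=|P\cup N|$ of vertices of nonzero sign. Using $(1-\epsilon)^W\leq e^{-\epsilon W}$, it suffices to show $\epsilon W(P,N)\geq s\bigl((1+a)\log n+3\bigr)$ in the main case, and a comparable bound of order $n\log n$ in the remaining case, and then to sum over sign patterns.

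\emph{Lower bound on $W$.} Consider the first sum in~\eqref{equ:W}, over $k$-sets inside $P\cup Z_0=V\setminus N$ that meet $P$. Each $v\in P$ lies in $\binom{n-|N|-1}{k-1}$ such $k$-sets, and a $k$-set is counted at most $k$ times in this way. Each candidate edge has replication number at least $r$. Hence the first sum is at least $(r/k)|P|\binom{n-|N|-1}{k-1}$, and symmetrically the second sum is at least $(r/k)|N|\binom{n-|P|-1}{k-1}$. Write $B=\binom{\lfloor n/2\rfloor-1}{k-1}$; this is positive since $n\geq2k$.

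\emph{Case split.} Since $P$ and $N$ are disjoint, at most one of them has more than $n/2$ elements.
\begin{itemize}
\item[(i)] If $|P|,|N|\leq n/2$, then $n-|N|-1\geq\lceil n/2\rceil-1\geq\lfloor n/2\rfloor-1$, and likewise for $P$. Hence $W\geq rBs/k$, and by~\eqref{equ:hyper-sharp}, $\epsilon W\geq s\bigl((1+a)\log n+3\bigr)$.
\item[(ii)] If, say, $|P|>n/2$, then $|N|<n/2$. Keeping only the first term gives $\epsilon W\geq\epsilon(r/k)|P|B>(n/2)\bigl((1+a)\log n+3\bigr)$.
\end{itemize}

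\emph{Summation.} In case (i), the number of pairs $(P,N)$ with $|P\cup N|=s$ is $\binom ns2^s\leq(2n)^s$. The contribution is therefore at most
\[
\sum_{s\geq1}\bigl(2n\cdot n^{-1-a}e^{-3}\bigr)^s=\sum_{s\geq1}\bigl(2e^{-3}n^{-a}\bigr)^s\leq\frac{2e^{-3}n^{-a}}{1-2e^{-3}}.
\]
In case (ii), there are at most $3^n$ sign patterns in total, each contributing at most $\exp\bigl(-(n/2)((1+a)\log n+3)\bigr)$. This total is at most $n^{-a}\cdot\bigl(3e^{-3/2}n^{-1/2}\bigr)^n$, which is negligible, since $n^{-(1+a)n/2}\leq n^{-a}n^{-n/2}$ for $n\geq2$. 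Adding the two bounds should give a total of at most $n^{-a}$.

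The main obstacle I expect is the constant bookkeeping. First, the two cases together must fit under $n^{-a}$ with exactly the ``$+3$'' in~\eqref{equ:hyper-sharp}; the slack $2e^{-3}/(1-2e^{-3})\approx0.11$ in case (i) leaves room for case (ii). Second, I need to check that the floor and ceiling handling is correct for odd $n$, in particular that in case (ii) $|N|<n/2$ still gives $n-|N|-1\geq\lfloor n/2\rfloor-1$.
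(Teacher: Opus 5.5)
Your proof is correct. Its skeleton matches the paper's: Proposition~\ref{pro:master}, the incidence count \eqref{equ:count-incidences}, at most $(2n)^s$ patterns of support $s$, and a geometric series.

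The difference lies in the patterns where one side is large. The paper proves the single bound $W(P,N)\geq rsB_{n,k}/k$ for every pattern. It uses that $j\mapsto\binom{j}{k-1}$ has nondecreasing increments: for $|P|\geq|N|$ it replaces the two binomial coefficients by their average, then compares that average with the value at $j_0=\lfloor n-1-s/2\rfloor\geq\lfloor n/2\rfloor-1$, as in \eqref{equ:three-steps}. You use only monotonicity instead. If both sides have at most $n/2$ vertices, each coefficient is at least $B_{n,k}$. If one side exceeds $n/2$, you discard the other term and obtain $\epsilon W>(n/2)((1+a)\log n+3)$, which beats even the trivial count of $3^n$ patterns.

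Your bookkeeping closes. Since $n\geq2k\geq4$, the second case contributes at most $n^{-a}(3e^{-3/2}/2)^4<0.013\,n^{-a}$, and the first at most $0.111\,n^{-a}$. The floor issue you flagged is harmless: $|N|<n/2$ forces $|N|\leq\lfloor n/2\rfloor$, hence $n-|N|-1\geq\lceil n/2\rceil-1\geq\lfloor n/2\rfloor-1$.

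The paper's route yields one inequality valid for all sign patterns, so a single series suffices, and the same inequality is reused as $\pi(P,N)\geq sB_{n,k}/(k\binom nk)$ in the proof of Theorem~\ref{thm:suffrnd}. Your route avoids the discrete-convexity step at the price of a second, crude count. It would carry over to the random design with the same case split.
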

For fixed $k$, keeping only the leading term in $n$ of its right-hand side, condition~\eqref{equ:hyper-sharp} reads
\[
r\geq(1+a)\,\frac{1}{\epsilon}\,\frac{2^{k-1}k!\,\log n}{n^{k-1}}(1+o(1)).
\]
For graphs, the constant is $4(1+a)$, and Corollary~\ref{cor:graph-sharp} below reduces it to $2(1+a)$. With one experiment on every candidate edge, it suffices that $\epsilon$ is of order $\log n/n^{k-1}$, compared with $\sqrt{\log n/n^{k-1}}$ in Corollary~\ref{cor:hyper}.

The proof in Appendix~\ref{app:separation} lower bounds $W(P,N)$ by
\[
\frac{r(|P|+|N|)}{k}\binom{\lfloor n/2\rfloor-1}{k-1}.
\]
It then sums over at most $\binom ns2^s$ choices with $|P|+|N|=s$.

A factor $\epsilon^{-2}$ is what one expects from concentration inequalities, and it is less obvious what allows $\epsilon^{-1}$. Two facts are used. First, the failure event is contained in a finite union of prescribed-outcome events. By Proposition~\ref{thm:mle-overlap} and Lemma~\ref{lm:certificate}, a unique MLE fails to exist only if, for some sign pattern $(P,N)$, all $W(P,N)$ experiments counted in~\eqref{equ:W} have prescribed outcomes. Second, the probability of such an event is a product over independent experiments. Every experiment has each of its two outcomes with probability at least $\epsilon$, so each factor is at most $1-\epsilon\leq e^{-\epsilon}$, and the event has probability at most $e^{-\epsilon W(P,N)}$. No deviation from a mean has to be controlled, and each experiment with a prescribed outcome contributes a factor $e^{-\epsilon}$, compared with $e^{-2\epsilon^2}$ in Hoeffding's inequality.

The simplest case shows the difference. Suppose every edge probability equals $\epsilon$, and consider the separating vector $-\bm e_v$. It occurs exactly when all $w\geq r\binom{n-1}{k-1}$ experiments involving $v$ fail, that is, when the degree of $v$ is zero, a deviation of $\epsilon w$ below its mean. Hoeffding's inequality bounds the probability of this deviation by $\exp(-2\epsilon^2w)$, because it uses only that the outcomes lie in $[0,1]$, which allows a variance as large as $1/4$. The actual probability is $(1-\epsilon)^w\leq\exp(-\epsilon w)$. The degree of $v$ has standard deviation $\sqrt{\epsilon(1-\epsilon)w}$, so a deviation of $\epsilon w$ is only about $\sqrt{\epsilon w}$ standard deviations, and the exponent is linear in $\epsilon$.

The remaining step is the lower bound on $W(P,N)$, which grows linearly in $|P|+|N|$. This makes the union bound over the at most $\binom ns2^s$ sign patterns with support $s$ a convergent geometric series. The term $\log n$ in~\eqref{equ:hyper-sharp} is the cost of this union bound per vertex of the support.

For graphs, the exact count of constrained edges gives a smaller constant.
\begin{cor}\label{cor:graph-sharp}
For complete graph experiments with $n\geq40$, $r=\min_{u\ne v}M_{u,v}$, and $\epsilon\leq p_{u,v}(\bm\beta)\leq1-\epsilon$, for any $a>0$, a unique MLE exists with probability at least $1-n^{-a}$ if
\[
r\geq2\,\frac{1}{\epsilon}\,\frac{(1+a)\log n+3}{n-1}.
\]
For balanced experiments this is $m\geq\epsilon^{-1}n\bigl((1+a)\log n+3\bigr)$.
\end{cor}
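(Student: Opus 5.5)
We apply Proposition~\ref{pro:master} to the complete graph design. Since $1-\epsilon\leq e^{-\epsilon}$, it suffices to show that
\[
\sum_{(P,N)}e^{-\epsilon W(P,N)}\leq n^{-a}.
\]
The key step is an exact count of the edges counted by $W(P,N)$ in the graph case. Write $p=|P|$, $q=|N|$, $s=p+q$, and $z=n-s=|Z_0|$. An edge $\{u,v\}$ meets $P$ and avoids $N$ exactly when it lies inside $P$ or joins $P$ to $Z_0$. There are $\binom p2+pz$ such edges, and symmetrically $\binom q2+qz$ edges meet $N$ and avoid $P$. Every edge has replication number at least $r$, so
\[
W(P,N)\geq r\Bigl(\tbinom p2+\tbinom q2+sz\Bigr)=r\Bigl(\tbinom p2+\tbinom q2+s(n-s)\Bigr).
\]
For fixed $s$, the quantity $\binom p2+\binom q2$ is at least $s(s-2)/4$, with near-equality at $p=q$. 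This gives
\[
W\geq r\,g(s),\qquad g(s)=s(n-s)+\frac{s(s-2)}{4}.
\]
For small $s$, $g(s)\approx s(n-1)$, which is where the constant $2$ should come from. The number of pairs $(P,N)$ with $|P\cup N|=s$ is $\binom ns2^s$, so the quantity to bound is
\[
\sum_{s=1}^n\binom ns2^se^{-\epsilon r\,g(s)}.
\]

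We split the sum into two ranges of $s$.

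\emph{Small $s$.} For $s\leq n/2$, say, we aim for $g(s)\geq s(n-1)/2$, which is comfortably true. Then each term is at most
\[
\Bigl(2n\,e^{-\epsilon r(n-1)/2}\Bigr)^s.
\]
Under the hypothesis $\epsilon r(n-1)/2\geq(1+a)\log n+3$, the base is at most $2e^{-3}n^{-a}$. The geometric series is then bounded by $2e^{-3}n^{-a}/(1-2e^{-3})<n^{-a}/2$.

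\emph{Large $s$.} For $s>n/2$, the factor $(n-s)$ shrinks, but $g$ is quadratic and behaves like $s^2/4$. For $s=n$, $g(n)=n(n-2)/4$, which is still of order $n(n-1)/4$. So $g(s)\geq c\,n\,s$ for a constant $c$ near $1/4$ in this range. Using $\binom ns2^s\leq 3^n$ in total, each term is at most
\[
3^n\exp\Bigl(-\tfrac{c'}{\,}\cdot\tfrac{n}{2}\,\bigl((1+a)\log n+3\bigr)\Bigr),
\]
since $\epsilon r n s$ is of order $n((1+a)\log n+3)$. This beats $3^n$ by a factor $n^{-\Omega(n)}$. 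Summing over at most $n$ values of $s$ gives at most $n^{-a}/2$ once $n\geq40$.

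The main obstacle is matching the minimum of $g(s)$ over the whole range to the $s(n-1)/2$-type linear rate while keeping the leading constant $2$. We have to check that the crossover region near $s\approx n$ still leaves enough room against the $\binom ns2^s$ count with the additive $+3$. The condition $n\geq40$ is presumably what makes these explicit numeric inequalities close. We would first try the cleaner uniform bound $g(s)\geq s(n-1)/2\cdot\min\{1,\,\cdot\}$, check it numerically, and fall back on the two-range split if it fails.
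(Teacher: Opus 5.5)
Your argument is correct and shares the paper's first step. The exact count $W(P,N)\geq r\bigl(\binom p2+\binom q2+s(n-s)\bigr)\geq r\,\omega(s)$, with $\omega(s)=s(n-3s/4-1/2)$, is your $g(s)$.

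Where you differ is in controlling the union bound. The paper bounds the number of patterns by $\binom ns2^s\leq(2en/s)^s$. It then reduces everything to one per-size inequality, $\epsilon r\omega(s)\geq s\log(2en/s)+a\log n+s$, and verifies it for all real $s\in[1,n]$ by concavity in $s$, checking only $s=1$ and $s=n$; the hypothesis $n\geq40$ is used at $s=n$.

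You instead split the range.

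\emph{Small supports.} For $s\leq n/2$ (in fact for all $s\leq 2n/3$) one has $\omega(s)\geq s(n-1)/2$. The crude count $(2n)^s$ then gives a geometric series with ratio at most $2e^{-3}n^{-a}$ and total below $0.12\,n^{-a}$. This is the mechanism of Theorem~\ref{thm:hyper-sharp}, and it is where the constant $2$ is used.

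\emph{Large supports.} For $s>n/2$ you replace the per-size count by the total count $3^n$ and use that $\omega$ is quadratic. This part is only sketched, and the constant in your display is garbled. It needs to be pinned down, because at $n=40$ the exponent must beat $n\log 3$. It does so with room to spare:
\begin{itemize}
\item $\omega$ is concave, so on $(n/2,n]$ it is minimized at $s=n$, where $\omega(n)=n(n-2)/4$.
\item The hypothesis then gives $\epsilon r\omega(s)\geq K\bigl((1+a)\log n+3\bigr)$ with $K=n(n-2)/(2(n-1))$, and $K\geq19n/39\geq1$ for $n\geq40$.
\item Hence the large-$s$ contribution is at most $3^ne^{-K(\log n+3)}n^{-Ka}\leq n^{-a}\exp\bigl(-n\bigl[\tfrac{19}{39}(\log n+3)-\log3\bigr]\bigr)\leq e^{-86}n^{-a}$ for $n\geq40$.
\end{itemize}
So the crossover near $s\approx n$ is not delicate in your route, and your fallback plan is unnecessary. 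Your argument in fact needs far less than $n\geq40$.

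What each route buys: your split avoids the $\log(2en/s)$ bookkeeping and the concavity analysis of $g_a$. The paper's route gives a single per-$s$ condition that treats all support sizes uniformly. The balanced-design reformulation, which you do not state, follows directly from $m=r\binom n2$.
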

The proof in Appendix~\ref{app:separation} follows that of Theorem~\ref{thm:hyper-sharp}, with the incidence bound on $W(P,N)$ replaced by the exact count $W(P,N)\geq r\bigl(\binom{|P|}2+\binom{|N|}2+s(n-s)\bigr)$, where $s=|P|+|N|$. The resulting union bound is controlled by a concavity argument in $s$.

\paragraph{A necessary condition}
We now show that the dependence of Theorem~\ref{thm:hyper-sharp} on $n$, $\epsilon$, and $r$ cannot be improved in general. To this end, we give a condition that any guarantee of the form of Theorem~\ref{thm:hyper-sharp}, holding for all parameters with edge probabilities in $[\epsilon,1-\epsilon]$, must satisfy. Consider a balanced complete $k$-uniform design with replication number $r$, and take $\beta_v=k^{-1}\log(\epsilon/(1-\epsilon))$ for every vertex, so that every edge has probability $\epsilon$. This parameter belongs to the class of Theorem~\ref{thm:hyper-sharp}. A fixed vertex $v$ takes part in $r\binom{n-1}{k-1}$ experiments. If all of them fail, then $-\bm e_v$ is a separating vector, and no unique finite MLE exists. This event has probability $(1-\epsilon)^{r\binom{n-1}{k-1}}$. Hence, if a unique MLE exists with probability at least $1-n^{-a}$ for this parameter, then $(1-\epsilon)^{r\binom{n-1}{k-1}}\leq n^{-a}$, which is equivalent to the necessary condition
\begin{equation}
r\geq a\,\frac{1}{\log(1/(1-\epsilon))}\,\frac{\log n}{\binom{n-1}{k-1}}.\label{equ:complete-lower}
\end{equation}
When $\epsilon\to0$, this becomes
\[
r\geq a\,\frac{1}{\epsilon}\,\frac{\log n}{\binom{n-1}{k-1}}(1+o(1)).
\]
For fixed $k$, the right-hand side is of order $\log n/(\epsilon n^{k-1})$. The sufficient condition~\eqref{equ:hyper-sharp} is of the same order, so the two conditions differ only by a factor depending on $k$ and $a$. For complete graph designs, this necessary condition appears in \citet[Section~3.3.3]{Wang2022Thesis}, which noted that it is smaller by a factor of order $\epsilon^{-1}$ than the concentration-based sufficient condition of Corollary~\ref{thm:eps}; Theorem~\ref{thm:hyper-sharp} and Corollary~\ref{cor:graph-sharp} close this gap.

\paragraph{An incomplete design}
Proposition~\ref{pro:master} applies to any design, while Theorem~\ref{thm:hyper-sharp} and Corollary~\ref{cor:graph-sharp} evaluate it for complete designs. At the other extreme are designs with the smallest possible number of candidate edges. Full column rank requires at least $n$ distinct candidate edges, and for graphs with $n$ odd, this minimum is attained by a cycle through all vertices. Suppose the candidate edges form such an odd cycle, and each candidate edge has replication number $r$. Each vertex then takes part in only $2r$ experiments, compared with $r(n-1)$ in a complete graph design. The incidence matrix of the odd cycle is invertible, so $C_{\bm X}$ is the image of the box $[0,r]^n$ under an invertible linear map, and $\bm d(\bm y)$ lies in its interior exactly when each aggregated count $\tilde y_S$ lies strictly between $0$ and $r$. If the edge probabilities belong to $[\epsilon,1-\epsilon]$, each candidate edge has an extreme count with probability at most $2(1-\epsilon)^r$, and a union bound over the $n$ candidate edges gives failure probability at most $2n(1-\epsilon)^r$. Hence, for any $a>0$, a unique MLE exists with probability at least $1-n^{-a}$ if
\[
r\geq\frac{1}{\log(1/(1-\epsilon))}\,\bigl((1+a)\log n+\log2\bigr).
\]
The required replication grows logarithmically in $n$, whereas for complete graph designs Corollary~\ref{cor:graph-sharp} requires $r$ only of order $\epsilon^{-1}\log n/n$. For an even cycle, the design is bipartite and the vertex parameters are not identifiable. The example illustrates the importance of replication when only a small collection of groups is tested.

The results of this section concern the existence of the MLE. Existence alone does not show that the MLE is close to the true parameter. In the next section, we bound the estimation error for the same fixed designs, in terms of the correlation matrix of the design.

%% file: fig-separation.tex
\begin{tikzpicture}[x=1cm,y=1cm,
  region/.style={draw=black!45,line width=0.4pt},
  vtx/.style={circle,draw=black,fill=white,line width=0.4pt,inner sep=0pt,minimum size=4.2pt},
  succ/.style={draw=black!80,line width=0.55pt,fill=black!12,fill opacity=0.55},
  fail/.style={draw=black!80,line width=0.55pt,dash pattern=on 2.6pt off 1.6pt,fill=black!12,fill opacity=0.55},
  free/.style={draw=black!55,line width=0.45pt,dash pattern=on 0.7pt off 1.3pt}]
\draw[region] (1.6,1.6) ellipse [x radius=1.55, y radius=1.5];
\draw[region] (4.6,1.5) ellipse [x radius=0.85, y radius=1.45];
\draw[region] (7.7,1.6) ellipse [x radius=1.45, y radius=1.5];
\node at (1.6,3.4) {\small $P$: $\alpha_v>0$};
\node at (4.6,3.4) {\small $Z_0$: $\alpha_v=0$};
\node at (7.7,3.4) {\small $N$: $\alpha_v<0$};
\coordinate (a) at (0.8,2.2); \coordinate (b) at (1.6,2.6); \coordinate (c) at (2.3,2.0);
\coordinate (d) at (1.0,0.9); \coordinate (e) at (2.2,0.7);
\coordinate (f) at (4.3,2.4); \coordinate (g) at (4.9,1.5); \coordinate (h) at (4.4,0.6);
\coordinate (i) at (6.9,2.3); \coordinate (j) at (7.7,2.6); \coordinate (k) at (8.3,1.85);
\coordinate (l) at (7.2,1.0); \coordinate (m) at (8.3,0.8);
\draw[succ] (1.55,2.3) ellipse [x radius=1.15, y radius=0.6];
\draw[succ,rotate around={-2:(2.7,0.75)}] (2.7,0.75) ellipse [x radius=2.15, y radius=0.4];
\draw[fail] (6.0,2.45) ellipse [x radius=2.2, y radius=0.42];
\draw[fail] (7.9,1.35) ellipse [x radius=0.95, y radius=0.9];
\draw[free,rotate around={-11.5:(4.75,1.5)}] (4.75,1.5) ellipse [x radius=2.8, y radius=0.4];
\draw[free] (4.55,1.5) ellipse [x radius=0.5, y radius=1.15];
\foreach \v in {a,b,c,d,e,f,g,h,i,j,k,l,m} \node[vtx] at (\v) {};
\begin{scope}[shift={(0.4,-0.75)}]
\draw[succ] (0.0,0) ellipse [x radius=0.3, y radius=0.13]; \node[anchor=west] at (0.4,0) {\small must succeed};
\draw[fail] (3.2,0) ellipse [x radius=0.3, y radius=0.13]; \node[anchor=west] at (3.6,0) {\small must fail};
\draw[free] (5.8,0) ellipse [x radius=0.3, y radius=0.13]; \node[anchor=west] at (6.2,0) {\small not determined by signs alone};
\end{scope}
\end{tikzpicture}

%% file: accuracy.tex
In this section, we examine how the estimation error depends on the design of experiments. A key quantity is the smallest eigenvalue $\lambda_1(\bm M)$ of the correlation matrix. We first give finite-sample bounds for arbitrary designs, then evaluate them for complete designs and relate the matrix condition to graph non-bipartiteness.

\subsection{Error bounds for arbitrary designs}
Let $\Theta_b=\{\bm\beta':\norm{\bm\beta'}_\infty\leq b\}$, where $b>0$, contain the true parameter, and let $\hat{\bm\beta}_b$ maximize the likelihood over $\Theta_b$. For a design with full column rank, this estimate exists and is unique. Whenever the unconstrained MLE exists and belongs to $\Theta_b$, it equals $\hat{\bm\beta}_b$. Write
\[
q_{kb}=\sigma'(kb)=\frac{e^{kb}}{(1+e^{kb})^2}.
\]

\begin{thm}\label{lem:mse1}
Suppose $\norm{\bm x_j}_1\leq k$ for all experiments, $\lambda_1(\bm M)>0$, and $\norm{\bm\beta}_\infty\leq b$. Then $\hat{\bm\beta}_b$ exists and is unique, and
\begin{equation}
\frac1n\E_{\bm\beta}\norm{\hat{\bm\beta}_b-\bm\beta}^2
\leq\frac{1}{4q_{kb}^2\lambda_1(\bm M)}.\label{equ:expected-error}
\end{equation}
For every $\eta\in(0,1)$, with probability at least $1-\eta$,
\begin{equation}
\frac{1}{\sqrt n}\norm{\hat{\bm\beta}_b-\bm\beta}
\leq\frac{1}{q_{kb}\sqrt{\lambda_1(\bm M)}}
\left(\frac12+\sqrt{\frac{\log(1/\eta)}2}\right).\label{equ:l2bound}
\end{equation}
Let $B_b$ be the event that the unconstrained MLE exists and belongs to $\Theta_b$. On $B_b$, $\hat{\bm\beta}=\hat{\bm\beta}_b$, and hence
\[
\P\left[B_b\cap\left\{\frac{1}{\sqrt n}\norm{\hat{\bm\beta}-\bm\beta}
>\hbox{right-hand side of }\eqref{equ:l2bound}\right\}\right]\leq\eta.
\]
\end{thm}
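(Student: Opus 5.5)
Existence and uniqueness of $\hat{\bm\beta}_b$ are immediate. The function $-\ell$ is continuous on the compact convex set $\Theta_b$, so a minimizer exists. By Lemma~\ref{lem:sc} it is strictly convex when $\lambda_1(\bm M)>0$, so the minimizer is unique.

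For the error bound I would use the standard strong-convexity argument combined with optimality of $\hat{\bm\beta}_b$. Let $\bm\Delta=\hat{\bm\beta}_b-\bm\beta$. For every $\bm\beta'\in\Theta_b$ and every $j$ we have $|\bm x_j^\top\bm\beta'|\leq\norm{\bm x_j}_1 b\leq kb$. Since $\sigma'$ is even and decreasing on $[0,\infty)$, this gives $\sigma'(\bm x_j^\top\bm\beta')\geq q_{kb}$. By~\eqref{equ:hessian}, the Hessian of $-\ell$ is therefore at least $q_{kb}\bm M$ on the segment between $\bm\beta$ and $\hat{\bm\beta}_b$, because $\Theta_b$ is convex. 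The mean value form of the gradient then gives
\[
\bigl(\nabla(-\ell)(\hat{\bm\beta}_b)-\nabla(-\ell)(\bm\beta)\bigr)^\top\bm\Delta\geq q_{kb}\,\bm\Delta^\top\bm M\bm\Delta .
\]
Optimality over the convex set $\Theta_b$, which contains $\bm\beta$, gives $\nabla(-\ell)(\hat{\bm\beta}_b)^\top\bm\Delta\leq0$. Combining the two inequalities,
\[
q_{kb}\,\bm\Delta^\top\bm M\bm\Delta\leq\nabla\ell(\bm\beta)^\top\bm\Delta=(\bm y-\bm p)^\top\bm X\bm\Delta .
\]

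Put $\bm u=\bm X\bm\Delta$. The last display reads $q_{kb}\norm{\bm u}^2\leq(\bm y-\bm p)^\top\bm u$. This step is the main point: I want the projection of the noise, not its full norm. Let $\Pi=\bm X\bm M^{-1}\bm X^\top$ be the orthogonal projection onto the column space of $\bm X$, which has rank $n$. Since $\bm u$ lies in that space, Cauchy--Schwarz gives $q_{kb}\norm{\bm u}\leq\norm{\Pi(\bm y-\bm p)}$. Also $\norm{\bm u}^2=\bm\Delta^\top\bm M\bm\Delta\geq\lambda_1(\bm M)\norm{\bm\Delta}^2$. Together,
\[
\norm{\bm\Delta}^2\leq\frac{\norm{\Pi(\bm y-\bm p)}^2}{q_{kb}^2\lambda_1(\bm M)} .
\]
For the expectation, the entries of $\bm y-\bm p$ are independent, centered, and have variance at most $1/4$. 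Hence $\E\norm{\Pi(\bm y-\bm p)}^2=\tr(\Pi\,\mathrm{diag}(p_j(1-p_j)))\leq\tr(\Pi)/4=n/4$, which yields~\eqref{equ:expected-error}.

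For the high-probability bound~\eqref{equ:l2bound}, I would use concentration of $g(\bm y)=\norm{\Pi(\bm y-\bm p)}$. This function is convex and $1$-Lipschitz in $\bm y\in[0,1]^m$, because $\Pi$ is a projection. One route is McDiarmid-type concentration with bounded differences, but the sum of squared differences would involve $\sum_j\Pi_{jj}^2$, which is awkward. A cleaner route is Talagrand's convex distance inequality. Even simpler, and the one I would try first, is the bounded-differences inequality applied via the Efron--Stein/Lipschitz form: for convex $1$-Lipschitz functions of independent variables in $[0,1]$ one has $\P[g\geq\E g+t]\leq e^{-2t^2}$. Combining this with $\E g\leq\sqrt{\E g^2}\leq\sqrt n/2$ gives, with probability at least $1-\eta$,
\[
g\leq\frac{\sqrt n}{2}+\sqrt{\frac{\log(1/\eta)}{2}}\leq\sqrt n\left(\frac12+\sqrt{\frac{\log(1/\eta)}{2}}\right).
\]
This matches~\eqref{equ:l2bound}. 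The main obstacle is securing exactly this constant in the concentration step. If the convex-Lipschitz inequality is only available with a worse constant, I would instead write $g=\sup_{\norm{\bm a}\leq1,\,\bm a\in\mathrm{col}(\bm X)}\bm a^\top(\bm y-\bm p)$ and apply a bounded-differences argument for suprema of linear forms.

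The final claim is immediate. On $B_b$, the unconstrained maximizer lies in $\Theta_b$ and therefore maximizes over $\Theta_b$. By uniqueness it equals $\hat{\bm\beta}_b$, so the event in question is contained in the failure event of~\eqref{equ:l2bound}.
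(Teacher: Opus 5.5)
Everything up to and including the bound on the expected error is correct and is the paper's argument. Your $\norm{\Pi(\bm y-\bm p)}$ equals the paper's $\norm{\bm M^{-1/2}\bm g}$, since $\bm g=\bm X^\top(\bm y-\bm p)$ and $\bm g^\top\bm M^{-1}\bm g=(\bm y-\bm p)^\top\Pi(\bm y-\bm p)$.

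The gap is in the high-probability step. You rely on $\P[g\geq\E g+t]\leq e^{-2t^2}$ for every convex $1$-Lipschitz function of independent $[0,1]$-valued variables, and this is not a result you can cite. It does not follow from the bounded differences inequality. A function that is merely $1$-Lipschitz has coordinate differences up to $1$, which gives $e^{-2t^2/m}$. Efron--Stein gives only a variance bound. The known dimension-free inequalities for convex Lipschitz functions have worse constants: Talagrand's convex distance inequality, or the entropy-method bound $\P[g>\E g+t]\leq e^{-t^2/2}$ of Boucheron, Lugosi and Massart. With the latter you get $g\leq\sqrt n/2+\sqrt{2\log(1/\eta)}$ with probability at least $1-\eta$. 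This implies \eqref{equ:l2bound} only when $n\geq4$. It is sharper than \eqref{equ:l2bound} for larger $n$, since its deviation term does not grow with $n$, but your claim that the constant ``matches \eqref{equ:l2bound}'' is not justified as written.

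The route you dismissed closes the gap with exactly the stated constant, and it is what the paper does. Your bounded-differences computation is off. Changing $y_j$ moves $\Pi(\bm y-\bm p)$ by $\pm\Pi\bm e_j$. Since $\Pi$ is symmetric and idempotent, $\norm{\Pi\bm e_j}^2=\bm e_j^\top\Pi\bm e_j=\Pi_{jj}$, so the coordinate-$j$ difference of $g$ is at most $\sqrt{\Pi_{jj}}$. The sum of squared differences is therefore $\sum_j\Pi_{jj}=\tr\Pi=n$, not $\sum_j\Pi_{jj}^2$. McDiarmid's inequality then gives $\P[g>\E g+t]\leq e^{-2t^2/n}$. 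With $\E g\leq\sqrt n/2$ and $t=\sqrt{n\log(1/\eta)/2}$, you obtain exactly \eqref{equ:l2bound}. Here $\Pi_{jj}=\bm x_j^\top\bm M^{-1}\bm x_j$ is the leverage $h_j$ of the paper's Step~7. Your fallback via suprema of linear forms gives the same differences, because over unit vectors $\bm a$ in the column space of $\bm X$ one has $\sup_{\bm a}|a_j|=\norm{\Pi\bm e_j}=\sqrt{\Pi_{jj}}$.
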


The proof in Appendix~\ref{app:mse1} uses strong convexity in the norm induced by $\bm M$. The score vector, after multiplication by $\bm M^{-1/2}$, has expected squared norm at most $n/4$, yielding~\eqref{equ:expected-error}. A bounded differences inequality gives~\eqref{equ:l2bound}. Bounds of this kind are known for models of paired comparisons and partial rankings, where the role of $\bm M$ is played by a weighted graph Laplacian and that of $\lambda_1(\bm M)$ by its spectral gap \citep{HOX14,SBBPRW16,VY2016}. Theorem~\ref{lem:mse1} gives the analog for the $\beta$-model, whose correlation matrix is not a Laplacian.

\paragraph{Complete designs}
For $2\leq k<n$, let $\bm G_{n,k}$ denote the correlation matrix of the complete $k$-uniform design with one experiment on every $k$-set. Each vertex belongs to $\binom{n-1}{k-1}$ of the $k$-sets and each pair of vertices to $\binom{n-2}{k-2}$ of them. Since $\binom{n-1}{k-1}=\binom{n-2}{k-1}+\binom{n-2}{k-2}$,
\begin{equation}
\bm G_{n,k}=\binom{n-2}{k-1}\bm I+\binom{n-2}{k-2}\bm J,\label{equ:complete-Gram}
\end{equation}
where $\bm J$ is the all-ones matrix. A balanced complete design with replication number $r$ has $\bm M=r\bm G_{n,k}$. The eigenvalues of $\bm G_{n,k}$ are $\binom{n-2}{k-1}$, with multiplicity $n-1$, and $k\binom{n-1}{k-1}$, with multiplicity one. Thus for balanced complete experiments,
\begin{equation}
\lambda_1(\bm M)=r\binom{n-2}{k-1}
=m\frac{k(n-k)}{n(n-1)}.\label{equ:complete-eigenvalue}
\end{equation}
It follows that
\begin{equation}
\frac1n\E\norm{\hat{\bm\beta}_b-\bm\beta}^2
\leq\frac{1}{4q_{kb}^2r\binom{n-2}{k-1}}.\label{equ:complete-error}
\end{equation}
For fixed $k$ and $b$, the expected mean squared error therefore has order at most $1/(rn^{k-1})$. Taking $\eta=1/n$ in~\eqref{equ:l2bound} gives a root mean squared error bound of order $\sqrt{\log n/(rn^{k-1})}$ with probability at least $1-1/n$.

The value~\eqref{equ:complete-eigenvalue} is the largest possible for a given number of experiments.
\begin{pro}\label{pro:lambda-max}
For every $k$-uniform design with $m$ experiments,
\[
\lambda_1(\bm M)\leq m\frac{k(n-k)}{n(n-1)},
\]
with equality if and only if every vertex takes part in the same number of experiments and every pair of distinct vertices takes part together in the same number of experiments.
\end{pro}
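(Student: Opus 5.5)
We bound $\lambda_1(\bm M)$ through a single linear functional of $\bm M$. Since $\bm M$ is symmetric, the Rayleigh principle gives $\lambda_1(\bm M)\leq \bm u^\top\bm M\bm u$ for every unit vector $\bm u$. Averaging over an orthonormal basis $\bm u_1,\ldots,\bm u_{n-1}$ of the hyperplane $\ind^\perp$ then yields
\[
\lambda_1(\bm M)\leq\frac{1}{n-1}\tr\bigl(\bm M\bm\Pi\bigr),\qquad \bm\Pi=\bm I-\tfrac1n\bm J.
\]
By linearity, $\tr(\bm M\bm\Pi)=\sum_j\bm x_j^\top\bm\Pi\bm x_j$. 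For a $k$-uniform experiment, $\bm x_j^\top\bm\Pi\bm x_j=k-k^2/n=k(n-k)/n$. Summing over the $m$ experiments gives $\lambda_1(\bm M)\leq m\,k(n-k)/(n(n-1))$, which is the bound.

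For the equality case, note that equality in the averaged Rayleigh bound holds if and only if $\bm u^\top\bm M\bm u=\lambda_1(\bm M)$ for every unit $\bm u\perp\ind$. This means $\ind^\perp$ lies in the $\lambda_1$-eigenspace of $\bm M$. Since $\bm M$ is symmetric, $\ind$ is then also an eigenvector. Hence $\bm M$ acts as $\lambda_1\bm I$ on $\ind^\perp$ and as $\mu$ on $\ind$, so $\bm M=\lambda_1\bm I+\frac{\mu-\lambda_1}{n}\bm J$. In this form all diagonal entries are equal and all off-diagonal entries are equal, which is exactly the stated balance condition. The one step needing care is justifying that equality in the trace average forces every Rayleigh quotient on $\ind^\perp$ to equal $\lambda_1$. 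This holds because each term is at least $\lambda_1$, so equality of the average forces equality termwise for every orthonormal basis, and hence for every unit vector in $\ind^\perp$.

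Conversely, suppose every vertex lies in $a$ experiments and every pair in $c$ experiments. Then $\bm M=(a-c)\bm I+c\bm J$, with eigenvalues $a-c$ on $\ind^\perp$ and $a-c+nc$ on $\ind$. Double counting gives $na=mk$ and $n(n-1)c=mk(k-1)$, so
\[
a-c=\frac{mk}{n}\Bigl(1-\frac{k-1}{n-1}\Bigr)=m\,\frac{k(n-k)}{n(n-1)}.
\]
Since $c\geq0$, the smallest eigenvalue is $a-c$, and equality holds. This matches~\eqref{equ:complete-eigenvalue} and completes the plan. The main obstacle is only the equality characterization. The forward direction needs the termwise argument above, and the converse needs the check that $a-c\leq a-c+nc$, which holds because $c\geq0$.
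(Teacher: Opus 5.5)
Your proof is correct and is essentially the paper's argument: both rest on $\tr(\bm M)=mk$ and $\ind^\top\bm M\ind=mk^2$. The paper phrases the bound as ``the largest eigenvalue is at least $mk^2/n$, so the other $n-1$ eigenvalues sum to at most $mk-mk^2/n$,'' and you average Rayleigh quotients over $\ind^\perp$, which is the same quantity $\tr(\bm M\bm\Pi)$; your equality analysis (forcing $\bm M=a\bm I+b\bm J$) and your double-counting converse also match the paper's.
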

Indeed, $\operatorname{tr}(\bm M)=\sum_j\norm{\bm x_j}^2=mk$ and $\bm 1^\top\bm M\bm 1=\sum_j(\bm 1^\top\bm x_j)^2=mk^2$, so the largest eigenvalue of $\bm M$ is at least $mk^2/n$. The remaining $n-1$ eigenvalues sum to at most $mk-mk^2/n$, which gives the bound. Equality requires all these eigenvalues to be equal and $\bm 1$ to be an eigenvector, which means that $\bm M=a\bm I+b\bm J$ for some $a$ and $b$; conversely, every such $\bm M$ attains the bound. For $k\geq2$, the second condition in Proposition~\ref{pro:lambda-max} implies the first: each experiment involving $v$ contains $k-1$ other vertices, so $M_{v,v}=\sum_{u\ne v}M_{u,v}/(k-1)$. Equal replication numbers alone do not suffice. Thus balanced complete designs, and balanced incomplete block designs in which every pair of vertices lies in the same number of candidate edges and all replication numbers are equal, maximize $\lambda_1(\bm M)$, and by Proposition~\ref{lem:lambdaconc}, uniform random designs come within a factor $1-\delta$ of the maximum with high probability.

We complement the upper bound with a lower bound valid for every estimator.
\begin{pro}\label{pro:minimax}
For any design, let $d_{\max}=\max_v M_{v,v}$. Then
\begin{equation}
\inf_{\tilde{\bm\beta}}\sup_{\bm\beta\in\Theta_b}
\frac1n\E_{\bm\beta}\norm{\tilde{\bm\beta}-\bm\beta}^2
\geq\frac14\min\{b^2,1/d_{\max}\},\label{equ:minimax}
\end{equation}
where the infimum is over all estimators based on the outcomes and $1/0$ is interpreted as $+\infty$.
\end{pro}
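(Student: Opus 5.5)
The bound $\frac14\min\{b^2,1/d_{\max}\}$ is the form a two-point (Le Cam) argument produces, applied one coordinate at a time and then summed (Assouad style). Since the loss decomposes as $\frac1n\sum_v(\tilde\beta_v-\beta_v)^2$, it suffices to show that for each vertex $v$ there is a pair of parameters in $\Theta_b$, differing only in coordinate $v$, for which any estimator incurs expected squared error at least $\frac14\min\{b^2,1/d_{\max}\}$ in coordinate $v$. To make a single estimator pay on all coordinates at once, I will use a hypercube prior. Put $\bm\beta^{\bm\tau}=\delta\bm\tau$ for $\bm\tau\in\{-1,+1\}^n$, with $\delta\le b$ so that $\bm\beta^{\bm\tau}\in\Theta_b$, and lower bound the supremum by the average over $\bm\tau$ uniform.

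For a fixed $v$, condition on $\bm\tau_{-v}$. The two hypotheses $\bm\tau$ and $\bm\tau^{(v)}$ (coordinate $v$ flipped) induce product Bernoulli laws $P_+,P_-$ that differ only on the $M_{v,v}$ experiments containing $v$. In each such experiment the logit shifts by $2\delta$. The standard two-point bound gives
\[
\tfrac12\bigl(\E_+(\tilde\beta_v-\delta)^2+\E_-(\tilde\beta_v+\delta)^2\bigr)\ge\tfrac{\delta^2}{2}\bigl(1-\mathrm{TV}(P_+,P_-)\bigr),
\]
using $(x-\delta)^2+(x+\delta)^2\ge2\delta^2$ together with $\int\min(p_+,p_-)=1-\mathrm{TV}$. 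By Pinsker, $\mathrm{TV}\le\sqrt{\KL/2}$. For Bernoulli logits the KL divergence is at most $\frac18(\text{logit difference})^2$, because $\sigma'\le1/4$ and KL is a Bregman divergence of $\log(1+e^x)$. Each affected experiment therefore contributes at most $(2\delta)^2/8=\delta^2/2$, so $\KL\le M_{v,v}\delta^2/2\le d_{\max}\delta^2/2$, which gives $\mathrm{TV}\le\delta\sqrt{d_{\max}}/2$.

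Now choose $\delta=\min\{b,1/\sqrt{d_{\max}}\}$ (with $\delta=b$ when $d_{\max}=0$, in which case $\mathrm{TV}=0$). Then $\mathrm{TV}\le1/2$, and the coordinate-$v$ Bayes risk is at least $\delta^2/4=\frac14\min\{b^2,1/d_{\max}\}$. Averaging over $\bm\tau$ and summing over $v$, then dividing by $n$, gives \eqref{equ:minimax}, since the Bayes risk lower bounds the minimax risk.

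The main obstacle is getting the constant to come out as exactly $\frac14$. With Pinsker and the $\sigma'\le1/4$ bound it works out, as computed above. If I have misjudged a factor, I would switch to the Bretagnolle–Huber bound $1-\mathrm{TV}\ge\frac12e^{-\KL}$ or rescale $\delta$. The other point to check is that the zero-degree case and the $b^2$ regime are handled by the same choice of $\delta$, which they are.
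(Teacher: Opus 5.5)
Your proof is correct and is essentially the paper's argument: the same hypercube $\rho\bm\tau$ with $\rho^2=\min\{b^2,1/d_{\max}\}$, the same Bregman bound $\KL\le\rho^2M_{v,v}/2$ from $\sigma'\le1/4$, and Pinsker to get $\operatorname{TV}\le1/2$. The only difference is that you bound the two-point squared loss directly via $\int\min(p_+,p_-)$ instead of passing through sign errors, and that direct route actually yields $\delta^2(1-\operatorname{TV})$ for the averaged loss rather than the $\tfrac{\delta^2}{2}(1-\operatorname{TV})$ you wrote (the sum of the two losses is already at least $2\delta^2(1-\operatorname{TV})$), so your inequality holds with a factor of two to spare and the constant $\tfrac14$ is not in danger.
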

The proof in Appendix~\ref{app:mse1} is a hypercube argument in the style of Assouad's lemma \citep{Tsybakov2009}. It considers the parameters $\rho\bm\tau$, for $\bm\tau\in\{-1,1\}^n$ and $\rho^2=\min\{b^2,1/d_{\max}\}$. Two such parameters that differ in a single coordinate $v$ have Kullback--Leibler divergence at most $\rho^2M_{v,v}/2\leq1/2$. Hence no estimator can reliably determine the sign of every coordinate, and each coordinate contributes an expected squared error of at least $\rho^2/4$.
For balanced complete experiments, $d_{\max}=r\binom{n-1}{k-1}$. Once $r\geq b^{-2}/\binom{n-1}{k-1}$, the expected mean squared error is bounded below by $1/(4r\binom{n-1}{k-1})$. Since $\binom{n-2}{k-1}/\binom{n-1}{k-1}=(n-k)/(n-1)$, this matches~\eqref{equ:complete-error} in its dependence on $n$ and $r$ for fixed $k$ and $b$. The rate is therefore minimax optimal up to constants, in expected risk. Its dependence on $n$ agrees with the complete-design estimation order of \cite{NandyBhattacharya2024}; the design-matrix bound also quantifies replication through~\eqref{equ:complete-error}.

\begin{ex}\label{ex:rmse-complete}
This example illustrates how the estimation error of complete designs decreases with $n$, at the rate given by Theorem~\ref{lem:mse1} and Proposition~\ref{pro:minimax}. We take one experiment on every $k$-set, for $k=2,3,5$ and $n$ between $k+2$ and $60$. For each pair $(k,n)$, we draw $20$ parameter vectors uniformly from $[-1,1]^n$, each with one sample of the outcomes. The estimate maximizes the likelihood over $\Theta_b$ with $b=2$ and is computed by a quasi-Newton method with box constraints.

Figure~\ref{fig:rmse-c} shows the root mean square error $(n^{-1}\norm{\hat{\bm\beta}_b-\bm\beta}^2)^{1/2}$, averaged over the $20$ samples, against $n$, with $95\%$ confidence intervals. For large $n$, the slopes on the logarithmic scale are close to $-(k-1)/2$, which is the order of $(n/m)^{1/2}$ for $m=\binom nk$. For small $n$, the MLE often fails to exist, and the box constraint keeps the estimate bounded.
\end{ex}
\begin{figure}[t]
\centering
\includegraphics[width=0.5\linewidth]{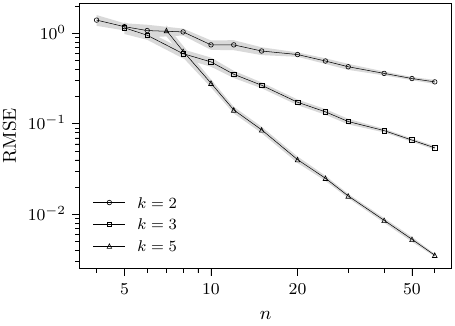}
\caption{Root mean square error versus the number of vertices $n$ for complete $k$-uniform designs, with $95\%$ confidence intervals.}
\label{fig:rmse-c}
\end{figure}

\begin{ex}
This example examines whether $\lambda_1(\bm M)$ governs the estimation error across designs of different structure, as Theorem~\ref{lem:mse1} suggests. We take $n=30$ vertices and $k=3$, draw the parameters uniformly from $[-1/2,1/2]$, and compute $\hat{\bm\beta}_b$ with $b=2$ as in Example~\ref{ex:rmse-complete}. We consider three families of designs:
\begin{itemize}
\item uniform random designs, with $m$ between $150$ and $2400$;
\item designs with few distinct groups: $m=900$ experiments on $G\in\{30,60,150,450\}$ randomly chosen groups, each tested $900/G$ times;
\item skewed designs with $m=900$, in which each group has two vertices from $\{1,\ldots,15\}$ and one from $\{16,\ldots,30\}$, except that, with probability $\delta\in[0.005,0.5]$, it is a uniformly random $3$-set.
\end{itemize}
In a skewed design with small $\delta$, the vector equal to $1$ on $\{1,\ldots,15\}$ and $-2$ on $\{16,\ldots,30\}$ sums to zero on almost every group, so $\lambda_1(\bm M)$ is small.

For each design, Figure~\ref{fig:error-lambda}(a) shows the mean squared error per parameter over $200$ samples of the outcomes against $1/\lambda_1(\bm M)$. Three observations stand out. First, all points lie below $4/\lambda_1(\bm M)$, which is the bound~\eqref{equ:expected-error} with $\sigma'(kb)$ replaced by $\sigma'(0)=1/4$; the bound~\eqref{equ:expected-error} itself is larger by a factor of about $10^4$, because it holds uniformly over $\Theta_b$. Second, for the uniform designs, the error is roughly proportional to $1/\lambda_1(\bm M)$. Third, for the other two families, a single direction is poorly determined, and the error grows more slowly than $1/\lambda_1(\bm M)$: a single small eigenvalue contributes about $1/(n\lambda_1(\bm M))$ to the error per parameter, not $1/\lambda_1(\bm M)$. The error is instead governed by the whole spectrum. Panel~(b) shows the same errors against $n^{-1}\operatorname{tr}(\mathcal I(\bm\beta)^{-1})$, where
\[
\mathcal I(\bm\beta)=\nabla^2(-\ell)(\bm\beta)=\sum_j\sigma'(\bm x_j^\top\bm\beta)\bm x_j\bm x_j^\top
\]
is the Fisher information at the true parameter, by~\eqref{equ:hessian}. In classical asymptotics, $n^{-1}\operatorname{tr}(\mathcal I(\bm\beta)^{-1})$ is the variance per parameter of the MLE; we use it here as a heuristic and do not prove such a statement. On $\Theta_b$, $\mathcal I(\bm\beta)\succeq\sigma'(kb)\bm M$, so $n^{-1}\operatorname{tr}(\mathcal I(\bm\beta)^{-1})\leq n^{-1}\operatorname{tr}(\bm M^{-1})/\sigma'(kb)\leq1/(\sigma'(kb)\lambda_1(\bm M))$; Theorem~\ref{lem:mse1} replaces the average of the inverse eigenvalues of $\bm M$ by the largest one. For all three families, the points lie close to the line $y=x$; the exception is the design with only $30$ distinct groups, where the box constraint limits the error.
\end{ex}
\begin{figure}[t]
\centering
\includegraphics[width=\linewidth]{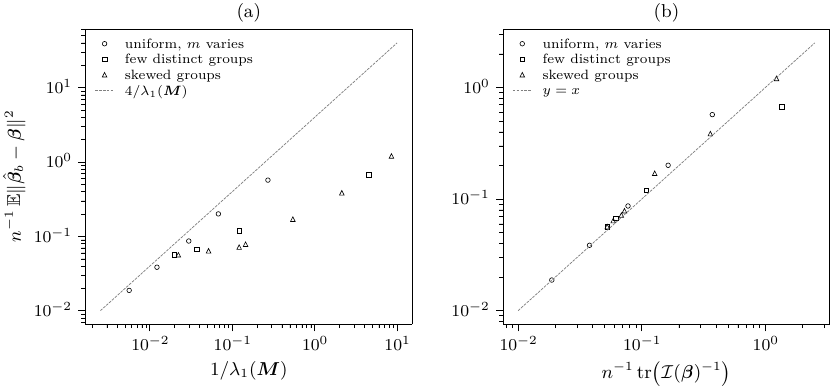}
\caption{Mean squared estimation error per parameter for designs of different structure, with $n=30$ and $k=3$, against (a)~$1/\lambda_1(\bm M)$ and (b)~$n^{-1}\operatorname{tr}(\mathcal I(\bm\beta)^{-1})$, where $\mathcal I(\bm\beta)$ is the Fisher information.}
\label{fig:error-lambda}
\end{figure}

\subsection{The smallest eigenvalue and graph non-bipartiteness}
For any design, $\bm\alpha^\top\bm M\bm\alpha=\norm{\bm X\bm\alpha}^2$, and the Rayleigh--Ritz characterization of the smallest eigenvalue of a symmetric matrix \citep{HornJohnson2013} gives
\begin{equation}
\lambda_1(\bm M)=\min_{\norm{\bm\alpha}=1}
\sum_{j=1}^m\left(\sum_{v\in S_j}\alpha_v\right)^2.\label{equ:variational}
\end{equation}
Consequently, the design has full rank if and only if it admits no nonzero vertex labeling whose labels sum to zero on every candidate edge.

For graph experiments, this condition has a simple combinatorial interpretation. Let $G$ be the multigraph on $V$ with one edge for each experiment. Then $\bm M=\bm D+\bm A$ is its signless Laplacian. On a connected component, the equations $\alpha_u+\alpha_v=0$ force labels to alternate along edges. A nonzero solution exists precisely for a bipartite component, including an isolated vertex. Hence $\lambda_1(\bm M)>0$ exactly when every component is non-bipartite. Along a bipartite component with parts $S,T$, the transformation $\bm\beta\mapsto\bm\beta+t(\ind_S-\ind_T)$ preserves every tested edge probability.

For nonempty $S\subseteq V$, let $\operatorname{cut}(S)$ count the edges between $S$ and its complement, and let $e_{\min}(S)$ be the least number of edges to remove from the induced multigraph on $S$ to make it bipartite. Define
\[
\psi=\min_{\emptyset\ne S\subseteq V}
\frac{|\operatorname{cut}(S)|+e_{\min}(S)}{|S|}.
\]
The parameter measures non-bipartiteness: $\psi=0$ exactly when a component is bipartite. With edge multiplicities counted as weights, \cite{DR94} give
\begin{equation}
\frac{\psi^2}{4d^*}\leq\lambda_1(\bm M)\leq4\psi,\label{equ:dr}
\end{equation}
where $d^*$ is the maximum weighted degree. Further relations are given by \cite{FF12}. Together with Theorem~\ref{lem:mse1}, these inequalities connect estimation accuracy to the extent to which the design is separated from bipartiteness.

For hypergraphs, the zero-sum labeling condition~\eqref{equ:variational} determines identifiability.

\begin{remark}
Its relation to the projected graph follows from the decomposition below. Put $A_{u,v}=M_{u,v}$ for $u\ne v$, $A_{u,u}=0$, $N_{u,u}=M_{u,u}$, and $D_{u,u}=\sum_{v\ne u}M_{u,v}$. Then
\[
\bm M=(\bm D+\bm A)-(\bm D-\bm N).
\]
Here $\bm D+\bm A$ is the signless Laplacian of the projected graph on $V$, in which the pair $\{u,v\}$ has weight $M_{u,v}$, and its maximum weighted degree is $\max_uD_{u,u}$. For $k$-uniform experiments with $k\geq2$, each experiment involving $u$ contributes $k-1$ to $D_{u,u}$, so $\bm D=(k-1)\bm N$ and $\bm M=(\bm D+\bm A)-(k-2)\bm N$. Let $\psi$ denote the weighted non-bipartiteness parameter of the projected graph. Applying~\eqref{equ:dr} to $\bm D+\bm A$, with $d^*=(k-1)\max_vM_{v,v}$, and using that adding the diagonal matrix $-(k-2)\bm N$ shifts the smallest eigenvalue by at least $-(k-2)\max_vM_{v,v}$ and at most $-(k-2)\min_vM_{v,v}$ \citep{HornJohnson2013}, we obtain
\begin{equation}
\frac{\psi^2}{4(k-1)\max_vM_{v,v}}-(k-2)\max_vM_{v,v}
\leq\lambda_1(\bm M)
\leq4\psi-(k-2)\min_vM_{v,v}.\label{equ:hyper-shift}
\end{equation}
Taking a singleton in the definition of $\psi$ gives $\psi\leq(k-1)\max_vM_{v,v}$. For $k\geq3$, the lower bound in~\eqref{equ:hyper-shift} is therefore nonpositive and does not establish full rank.
\end{remark}

For graph designs, \eqref{equ:dr} relates $\lambda_1(\bm M)$ to the structure of the design. For hypergraph designs with $k\geq3$, we have no comparable combinatorial lower bound. In the next section, we show that for random designs $\lambda_1(\bm M)$ is close to the smallest eigenvalue of its expectation, which gives explicit error bounds.

%% file: random.tex
In this section, the design is itself random. Random designs model situations in which the tested groups are not chosen by the analyst, as when groups are sampled at random in crowdsourcing or controlled experiments. They also serve as a benchmark for incomplete designs: how many experiments suffice for identifiability, existence, and accuracy? Each of the $m$ experiments tests a $k$-subset of $V$ drawn uniformly at random, with replacement, independently of the other experiments. Equivalently, the rows of the design matrix are drawn independently and uniformly from $\{\bm x\in\{0,1\}^n:\norm{\bm x}_1=k\}$. The candidate edges and their replication numbers are therefore random. Conditional on the design, the outcomes follow the $\beta$-model independently. There are thus two sources of randomness: the draw of the design and the outcomes given the design. Section~\ref{sec:random-rank} concerns the rank of the design matrix, which depends on the design alone. Sections~\ref{sec:random-existence} and~\ref{sec:random-error} concern MLE existence and estimation accuracy, and their probabilities are over both sources.

\subsection{Rank}\label{sec:random-rank}
By Lemma~\ref{lem:sc} and Proposition~\ref{thm:mle-overlap}, a unique MLE can exist only if the design matrix has full column rank. Equivalently, the vertex parameters must be identifiable from the success probabilities of the tested groups. Full rank is also needed for the error bound of Theorem~\ref{lem:mse1}, which requires $\lambda_1(\bm M)>0$. For a fixed design, full rank is a deterministic property, characterized in Section~\ref{sec:accuracy}. For a random design, it is a random event that depends only on the draw of the groups, not on the parameter or the outcomes. Its threshold is therefore the smallest number of experiments with which the parameters can be estimated at all, whatever the edge probabilities. In terms of hypergraphs, $\bm X$ is the incidence matrix of the random multi-hypergraph formed by the tested groups, and the question is when this matrix has full column rank.

Comparing the rank threshold with the existence threshold of Section~\ref{sec:random-existence} separates two costs. Full rank requires $m$ of order $\max\{n,(n/k)\log n\}$, so that, in particular, every vertex takes part in some experiment. For fixed $k$, a unique MLE requires $m$ of order $\epsilon^{-1}(n/k)\log n$, so that, in particular, every vertex has both a success and a failure. The factor $\epsilon^{-1}$ is thus the cost of the outcomes, beyond identifiability.

A necessary condition for full rank is that the number of experiments is at least the number of vertices. Another is that every vertex takes part in at least one experiment. For a fixed vertex, the probability of a null column is $(1-k/n)^m$. Thus
\begin{equation}
\P[\bm X\text{ has a null column}]\leq n e^{-km/n}.\label{equ:null-column-upper}
\end{equation}
In particular, $m\geq(1+a)(n/k)\log n$ ensures no null column with probability at least $1-n^{-a}$. This bound, and the fact that its order $(n/k)\log n$ cannot be lowered when $k=o(n/\log n)$, appear in \citet[Theorem~4.2.2]{Wang2022Thesis}.

We give two sufficient conditions for full rank, which differ in the range of $k$ and in their precision. Theorem~\ref{thm:rank-growing} allows the group size $k$ to grow with $n$, up to $o(n/\log n)$. For groups of size at least $(1+\xi)\log n$, for a fixed $\xi>0$, it shows that $m=n$ experiments suffice, which is the smallest possible number. For smaller groups, it shows that $m$ of order $(n/k)\log n$ suffices, which is the necessary order, but with a constant $c>2$ multiplying $(n/k)\log n$ in place of the threshold value $1$. In both cases the failure probability tends to zero slowly. Theorem~\ref{thm:rank-fixed} is restricted to fixed $k$ and is sharp. Full rank holds with high probability for $m=\lfloor c(n/k)\log n\rfloor$ with $c>1$ and fails with probability tending to one for $c<1$, and above the threshold the failure probability is asymptotically the probability of a null column. For fixed $k$, Theorem~\ref{thm:rank-fixed} therefore improves Theorem~\ref{thm:rank-growing}, while Theorem~\ref{thm:rank-growing} also covers groups whose size grows with $n$.

\Needspace{6\baselineskip}
\begin{thm}\label{thm:rank-growing}
Assume that $2\leq k=k(n)=o(n/\log n)$ and that, for a fixed constant $c>2$,
\begin{equation}
m\geq\max\{n,c(n/k)\log n\}.\label{equ:rank-growing}
\end{equation}
Then
\[
\P[\rank_{\reals}(\bm X)<n]
=O_c\left(\frac{(\log\log n)^{3/2}}{\sqrt{\log n}}\right).
\]
If, moreover, $k\geq(1+\xi)\log n$ for a fixed $\xi>0$, the same bound, with a constant depending on $\xi$, holds under $m\geq n$ alone. Conversely, $m\geq n$ is necessary for full column rank, and if $m\leq c'(n/k)\log n$ for a fixed $c'<1$, the probability of full column rank tends to zero.
\end{thm}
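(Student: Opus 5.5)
The plan is to reduce to a square-matrix singularity result. Specifically, we use the theorem of \cite{FKS22}: an $n\times n$ matrix whose rows are independent, uniformly random $0$--$1$ vectors with exactly $d$ ones is nonsingular over $\reals$ with probability $1-O\bigl((\log\log n)^{3/2}/\sqrt{\log n}\bigr)$, provided $d\geq(1+\xi)\log n$ and $d=o(n)$. I am assuming the cited result has essentially this form; checking its exact hypotheses is part of the plan.

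\emph{Large groups.} When $k\geq(1+\xi)\log n$ and $m\geq n$, keep only the first $n$ rows of $\bm X$. They form a square matrix of exactly this type, so its nonsingularity gives $\rank_{\reals}(\bm X)=n$.

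\emph{Small groups: the disjoint-block reduction.} When $k<(1+\xi)\log n$, fix $t=\lceil(1+\xi)\log n/k\rceil$ and partition the $m$ experiments into $\lfloor m/t\rfloor$ disjoint blocks of $t$ consecutive rows. Each block sum $\bm z_i=\sum_{j\in\text{block }i}\bm x_j$ lies in the row space of $\bm X$. Hence it suffices to find $n$ block sums that are linearly independent. Call a block \emph{good} if its $t$ groups are pairwise disjoint; then $\bm z_i$ is the indicator of a set of size $d=tk$, with $(1+\xi)\log n\leq d=O(\log n)$ (recall $k<(1+\xi)\log n$ in this case). Two facts about good blocks are needed.
\begin{itemize}
\item Blocks are independent, and conditional on being good, the union of a block's groups is a uniformly random $d$-set, by symmetry under permutations of $V$.
\item The events that blocks are good are independent across blocks, so conditional on which blocks are good, the good block sums are i.i.d.\ uniform $d$-sets.
\end{itemize}
A block is bad with probability at most $\binom t2k^2/n\leq d^2/n=O(\log^2n/n)=o(1)$, using $k=o(n/\log n)$ only to make this and the Chernoff bound below uniform. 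The number of blocks is $m/t\approx c\,n\log n/(k t)\geq c\,n/(1+\xi+o(1))$. Choosing $\xi$ small so that $c/(1+\xi)>2$ gives at least $2n$ blocks, hence at least $n$ good blocks, except with exponentially small probability by a Chernoff bound. Taking the first $n$ good block sums and applying \cite{FKS22} yields the stated bound. The slack $c>2$ (rather than $c>1$) is what pays for rounding $t$ up and for discarding bad blocks. If the cited theorem requires a density larger than $(1+\xi)\log n$, I would enlarge $t$ accordingly, using the same slack.

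\emph{Converse.} The condition $m\geq n$ is necessary for column rank $n$. For $m\leq c'(n/k)\log n$ with $c'<1$, let $I$ be the number of vertices in no experiment. Then $\E I=n(1-k/n)^m\geq n^{1-c'-o(1)}\to\infty$, where $k=o(n/\log n)$ makes the correction $(1-k/n)\approx e^{-k/n}$ negligible. For distinct vertices, $\P[u,v\text{ both uncovered}]=\bigl(\binom{n-2}{k}/\binom nk\bigr)^m\leq(1-k/n)^{2m}$, so indicator covariances are nonpositive and $\mathrm{Var}\,I\leq\E I$. Chebyshev then gives $\P[I=0]\to0$, so a null column appears with probability tending to one.

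\emph{Main obstacle.} The delicate step is applying \cite{FKS22} legitimately. We must verify that its row distribution (uniform fixed-weight rows versus i.i.d.\ Bernoulli entries) and its range of $d$ match the conditioned good-block sums. We must also verify that conditioning on goodness preserves independence and uniformity, so no dependence leaks through the selection of the first $n$ good blocks. If the theorem is only for Bernoulli rows, I would first try to transfer it to fixed-weight rows by conditioning on row sums, which costs a polynomial factor that its bound must absorb.
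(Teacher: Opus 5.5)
Your reduction is the paper's: the same case split at $k\approx(1+\xi)\log n$, the same disjoint-block construction with conditionally independent, uniformly distributed good block sums, and the same null-column second-moment converse. The gap is in the input you treat as a black box.

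What \cite{FKS22} provide is qualitative. A square matrix with independent uniform weight-$d$ rows is singular with probability tending to zero for $(1+\xi)\log n\leq d\leq n/2$. An explicit rate that is uniform over $d$ in this range is not available off the shelf. So, as written, your argument gives $\P[\rank_{\reals}(\bm X)<n]=o(1)$ but not the claimed $O\bigl((\log\log n)^{3/2}/\sqrt{\log n}\bigr)$.

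Supplying that rate is the substantive part of the paper's proof, Lemma~\ref{lm:quantitative-rank}. It reruns their argument with constants uniform in $d$:
\begin{itemize}
\item Left-kernel vectors of support below $t=an/\log d$ are excluded by a union bound over parity patterns modulo $2$.
\item Right-kernel vectors with a fiber larger than $n-t$ are excluded by reduction modulo $d-1$.
\item For all remaining vectors, their Lemma~2.1 and the atom bound $O(\sqrt{n/(sd)})$ of their Lemma~4.2 give the dominant term $(n/t)\sqrt{n/(td)}\asymp(\log d)^{3/2}/\sqrt d$.
\end{itemize}
This term is $O\bigl((\log\log n)^{3/2}/\sqrt{\log n}\bigr)$ whenever $d\geq(1+\xi)\log n$, because it decreases for $d>e^3$.

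Your fallback would not rescue this. Conditioning Bernoulli rows on all $n$ row sums equal to $d$ costs a factor of about $\P[\operatorname{Bin}(n,d/n)=d]^{-n}=\exp(\Theta(n\log d))$. That is exponential, not polynomial. The paper uses this comparison only for $s$ rows inside a union bound, where the cost $e^{O(s\log d)}$ is absorbed.

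There is also a smaller slip in the block count. Your bound $m/t\geq cn/(1+\xi+o(1))$ requires $kt=(1+\xi+o(1))\log n$, which holds only when $k=o(\log n)$. If $k$ is just below $(1+\xi)\log n$, then $t=2$ and $kt$ is nearly $2(1+\xi)\log n$. You then only get $m/t>cn/(2(1+\xi))$, not $2n$ blocks. This still suffices: the expected number of bad blocks is $O((\log n)^2)$, so you need only $(1+\zeta)n$ blocks. Taking $2(1+\xi)(1+\zeta)<c$ is exactly how the paper uses $c>2$. The factor $2$ pays for rounding $t$ up, not for discarding bad blocks.
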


Theorem~\ref{thm:rank-growing} proves Conjecture~4.2.3 in \citet{Wang2022Thesis} in a stronger form. The conjecture makes the same assumptions and asserts a failure probability of order $(\log\log n)^{-1/4}$, which is larger than the bound of Theorem~\ref{thm:rank-growing}. The thesis outlined an approach to the conjecture based on the vertex-exposure process of \cite{Costello2008}, which it reduced to a further unproved conjecture \citep[Conjecture~4.2.12]{Wang2022Thesis}. The proof here uses a different argument. The proof in Appendix~\ref{app:rank-growing} reduces the problem to square matrices whose rows are independent uniform incidence vectors of $d$-sets. \cite{FKS22} showed that such a matrix is nonsingular with high probability once $(1+\xi)\log n\leq d\leq n/2$, for a fixed $\xi>0$. Lemma~\ref{lm:quantitative-rank} is a quantitative version of their result: the singularity probability is of order $n^{-\gamma}\log d+(\log d)^{3/2}/\sqrt d$, for a constant $\gamma>0$, uniformly in $d$.

The proof considers two cases. If $k\geq(1+\xi)\log n$, the first $n$ rows of $\bm X$ already form such a matrix with $d=k$. If $k$ is smaller, the rows of $\bm X$ are too sparse, and the proof builds denser rows from sums of rows. It divides the experiments into $(1+\zeta)n$ disjoint blocks of $\nu =\lceil(1+\xi)\log n/k\rceil$ rows each, for a fixed $\zeta>0$. A block is successful when its $\nu $ candidate edges are pairwise disjoint. The sum of its rows is then the incidence vector of a uniformly random set of size $d=\nu k$, which is of order $\log n$. A block fails with probability $O((\log n)^2/n)$, so with high probability at least $n$ blocks succeed. The sums of $n$ successful blocks form an $n\times n$ matrix with independent uniform rows of weight $d$, to which Lemma~\ref{lm:quantitative-rank} applies. Each of these rows is a sum of rows of $\bm X$, so if this matrix is nonsingular, then $\bm X$ has full column rank. With $d$ of order $\log n$, the second term of the lemma gives the rate in Theorem~\ref{thm:rank-growing}. The blocks use at most $2(1+\xi)(1+\zeta)(n/k)\log n$ rows, because $\nu \leq2(1+\xi)\log n/k$, and this is where the constant $c>2$ comes from. The factor $2$ is caused by rounding $\nu $ up to an integer, and it matters only when $k$ is of the order $\log n$: for $k=o(\log n)$, we have $\nu =(1+o(1))(1+\xi)\log n/k$, and the same argument gives the conclusion for every fixed $c>1$. For the converse, the number of null columns has mean $n(1-k/n)^m$, which tends to infinity when $m\leq c'(n/k)\log n$ with $c'<1$. The null-column indicators are nonpositively correlated, so Chebyshev's inequality shows that a null column exists with high probability.

The sufficient condition~\eqref{equ:rank-growing} and the necessary conditions of Theorem~\ref{thm:rank-growing}, namely $m\geq n$ and $m>c'(n/k)\log n$ for every fixed $c'<1$, are of the same order $\max\{n,(n/k)\log n\}$. For $k\geq(1+\xi)\log n$ they coincide, and otherwise they differ only in the constant multiplying $(n/k)\log n$. For fixed $k$, the next theorem closes this gap.
\Needspace{12\baselineskip}
\begin{thm}\label{thm:rank-fixed}
Assume that $k\geq2$ and $c>0$ are fixed, and that $m=\lfloor c(n/k)\log n\rfloor$. If $c>1$, then
\begin{equation}
\P[\rank_{\reals}(\bm X)<n]=(1+o(1))n^{-(c-1)}.\label{equ:rank-fixed}
\end{equation}
If $c<1$, this probability tends to one. Consequently, for any fixed $a>0$ and $c>1+a$, $m\geq c(n/k)\log n$ ensures full rank with probability at least $1-n^{-a}$ for all sufficiently large $n$.
\end{thm}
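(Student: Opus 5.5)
Our plan is to show that, above the threshold, the only likely cause of rank deficiency is a null column. The first step computes the null-column probability exactly. With $m=\lfloor c(n/k)\log n\rfloor$, a fixed vertex misses all experiments with probability $(1-k/n)^m=\exp(-km/n+O(k^2m/n^2))=(1+o(1))n^{-c}$. The expected number of null columns is therefore $(1+o(1))n^{1-c}$. For $c>1$, the second moment, namely pairs of null columns with probability $(1-2k/n)^m\approx n^{-2c}$, shows that $\P[\text{some null column}]=(1+o(1))n^{1-c}$ by Bonferroni. For $c<1$, the mean tends to infinity, and the Chebyshev argument already given for Theorem~\ref{thm:rank-growing} shows that a null column exists with probability tending to one. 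This settles the case $c<1$ and gives the lower bound in~\eqref{equ:rank-fixed}. The final assertion then follows, since $(1+o(1))n^{-(c-1)}\leq n^{-a}$ once $c-1>a$.

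For the upper bound we must show that $\P[\rank_{\reals}(\bm X)<n,\ \text{no null column}]=o(n^{1-c})$. Following the remark in the introduction, we work over $\mathbb F_p$ for a fixed prime $p>k$. Real rank deficiency implies that every $n\times n$ minor vanishes over $\reals$, hence also modulo $p$, so $\rank_{\mathbb F_p}(\bm X)<n$. It thus suffices to bound the probability that some nonzero $\bm\alpha\in\mathbb F_p^n$ satisfies $\bm X\bm\alpha=0$. Since $p>k$, no nonzero constant vector lies in the kernel, and this removes the parity obstruction for even $k$. We classify kernel vectors by their support $T=\operatorname{supp}(\bm\alpha)$, with $|T|=t$. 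Every row must meet $T$ in zero vertices or in at least two, because a row meeting $T$ in exactly one vertex $v$ gives $\alpha_v=0$.

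The main steps are as follows.

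\emph{Small supports}, $t\leq\delta n$. Here we use a first-moment count. The number of pairs $(T,\bm\alpha)$ is at most $\binom nt(p-1)^t$. A single row meets $T$ in exactly one vertex with probability about $kt/n$, and we require that no row does so. We also require that every vertex of $T$ lies in some row, since there is no null column, and each such row must meet $T$ at least twice. A row meets $T$ at least twice with probability $O((kt/n)^2)$. Combining these, the probability is at most
\[
\exp(-(1-o(1))ktm/n)\cdot\P[\text{the }t\text{ vertices are covered by rows meeting }T\text{ twice}].
\]
The first factor is about $n^{-ct}$. The covering requirement needs about $t/k$ such rows and contributes roughly $\binom{m}{t/k}(kt/n)^{2t/k}$. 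For $t=1$ the event is impossible without a null column. For $t\geq2$ we expect the total to be $n^{-(c-1)t+O(1)}\cdot(\text{poly factors})$, and this is $o(n^{1-c})$ after summing over $t$. Making this estimate uniform for $2\leq t\leq\delta n$ is the delicate calculation.

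\emph{Large supports}, $t>\delta n$. We expose the rows and use a Littlewood--Offord anti-concentration bound over $\mathbb F_p$. For a fixed $\bm\alpha$ with large support that is not essentially constant, a random $k$-set $S$ gives $\P[\sum_{v\in S}\alpha_v=0]\leq1-\eta$ for some $\eta>0$, so the probability that all $m$ rows vanish is at most $(1-\eta)^m=e^{-\Omega(n\log n)}$. This beats the $p^n$ union bound. The vectors that are nearly constant on their support must be handled separately. For these, the row sums are forced to lie in $\{0,a,\ldots,ka\}$, and $p>k$ ensures that a row inside the support has a nonzero sum. Such vectors are then ruled out in the same way as constants.

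We expect the main obstacle to be the intermediate regime of supports, say $t$ between a large constant and $\delta n$, with structured $\bm\alpha$. There the first-moment count must be weighed carefully against $\binom nt(p-1)^t$. Our plan is to split this regime at $t=n/\log n$. Below the split, we use expansion of the random hypergraph: every set of $t$ vertices meets many rows exactly once with overwhelming probability. Above it, we use the anti-concentration bound, adapting the argument of \cite{CFP19} from $\mathbb F_2$ to $\mathbb F_p$.
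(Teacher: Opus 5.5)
Your skeleton is the paper's. It uses null columns with Bonferroni and Chebyshev for the lower bound and for $c<1$, and reduces to $\mathbb F_p$ with $p>k$. It classifies kernel vectors by support, kills a whole support with a row meeting it exactly once, and, for large supports, uses a row inside a class of equal nonzero labels, whose sum is $ka\neq0$. What is missing is the actual estimate for $2\leq t\leq\delta n$. You leave it as ``the delicate calculation'' and plan to handle it with expansion and an adaptation of \cite{CFP19}. No such machinery is needed. The bound $(1-a_t)^m$, with $a_t=t\binom{n-t}{k-1}/\binom nk$, depends only on the support, so you union bound over $\binom nt$ supports and the structure of $\bm\alpha$ never enters.

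You can drop the labels and the covering factor, which is a conditional probability and hence at most $1$. You also need not condition on the absence of null columns, since the $t=1$ term of the label-free bound is exactly $n(1-k/n)^m$. The one real issue is that $a_t=(1-o(1))kt/n$ fails for $t$ of order $n$, where a factor $(1-\delta)^{k-1}$ is lost. So your first factor $\exp(-(1-o(1))ktm/n)$ is not valid uniformly up to $\delta n$. The fix is to split, as the paper does, at $h_n=\lfloor n/(\log n)^2\rfloor$:
\begin{itemize}
\item For $t\leq h_n$, one has $ma_t=ct\log n+O(t/\log n)$ and $\binom nt e^{-ma_t}\leq(en^{-(c-1)}e^{O(1/\log n)}/t)^t$. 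This sums over $t\geq2$ to $O(n^{-2(c-1)})$.
\item For $h_n<t\leq\delta n$ with $(1-\delta)^{k-1}>1/2$, the crude bound $a_t\geq kt/(2n)$ suffices. There $\log\binom nt\leq t(2\log\log n+O(1))$ is negligible against $(c/2)t\log n$, giving $\exp\{-\Omega(n/\log n)\}$ in total.
\end{itemize}
Your heuristic $n^{-(c-1)t+O(1)}$ is too weak as written. With a constant in the exponent it does not give $o(n^{1-c})$ for $c$ near $1$; you need the form $(Cn^{1-c}/t)^t$.

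For $t>\delta n$, your Littlewood--Offord claim for vectors that are ``not essentially constant'' is asserted without proof, and the dichotomy is unnecessary. The argument you reserve for nearly constant vectors works for every vector with large support. By pigeonhole, some nonzero label $a$ occurs on at least $\delta n/(p-1)$ coordinates. A uniform $k$-set lies inside this class with probability at least $\rho=(\delta/(2(p-1)))^k$ for large $n$, and its sum is then $ka\neq0$ because $p>k$. The union bound over all $p^n$ vectors gives $p^n(1-\rho)^m\leq\exp\{n\log p-\rho m\}=\exp\{-\Omega(n\log n)\}$. With these pieces, the upper bound is $n(1-k/n)^m+O(n^{-2(c-1)})+\exp\{-\Omega(n/\log n)\}=(1+o(1))n^{-(c-1)}$. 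The final assertion of the theorem rests on this upper bound, not on the lower bound you cite it after. It also needs monotonicity in $m$: adding rows cannot lower the rank, which covers every $m\geq c(n/k)\log n$.
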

Like the arguments of \cite{CFP19} and \cite{FKS22}, the proof in Appendix~\ref{app:rank-fixed} works over a finite field. It bounds the probability that $\bm X$ has a nonzero kernel vector, with separate arguments for kernel vectors of small and of large support. The proof works over the field $\mathbb F_p$ of integers modulo a prime $p>k$. Full rank over $\mathbb F_p$ implies full rank over the real numbers, because the $n\times n$ minors of $\bm X$ are integers, and a minor that is nonzero modulo $p$ is nonzero. Working over a finite field makes the number of candidate kernel vectors finite, and $p>k$ ensures that $ka\ne0$ in $\mathbb F_p$ for every nonzero $a$, which is used for kernel vectors of large support.

Consider a kernel vector with support $S$ of size $s$. Every row of $\bm X$ that meets $S$ in exactly one vertex has a nonzero product with this vector. Such a row occurs with probability $a_s\approx ks/n$, so a kernel vector with support $S$ exists with probability at most $(1-a_s)^m$. This bound does not depend on the labels on $S$, and a union bound over the $\binom ns$ supports suffices. For $s=1$, the bound is the null-column probability $n(1-k/n)^m=(1+o(1))n^{-(c-1)}$. Supports with $2\leq s\leq n/(\log n)^2$ contribute $O(n^{-2(c-1)})$, and the remaining supports with $s\leq\delta n$, for a small fixed $\delta>0$, contribute $\exp\{-\Omega(n/\log n)\}$. For $s>\delta n$, some nonzero label appears on at least $\delta n/(p-1)$ vertices. A row whose $k$ vertices all carry this label $a$ has product $ka$, which is nonzero because $p>k$. Such a row occurs with probability bounded below by a positive constant, which outweighs the at most $p^n$ labeled vectors. Hence, above the threshold, null columns account for rank deficiency asymptotically. A second-moment argument for the number of null columns gives the matching lower bound and the behavior for $c<1$.

\begin{ex}
This example illustrates the rank threshold of Theorem~\ref{thm:rank-fixed} and the role of null columns. For $n=200$ and $k=2,3,5$, we sampled $250$ uniform random designs for each $m=\lfloor c(n/k)\log n\rfloor$, with $c$ between $0.5$ and $2.5$, and recorded whether $\bm X$ has full column rank and whether it has a null column.

Figure~\ref{fig:rank-threshold} shows the frequencies of the two events. They coincide, except for $k=5$ and $c\leq0.875$, where $m<n$ and $\bm X$ is rank deficient for that reason alone. Both follow $1-\exp(-n^{-(c-1)})$, the Poisson approximation to the probability of a null column, with the transition at $c=1$ predicted by Theorem~\ref{thm:rank-fixed}. For larger groups, $k=10,20,40$, which exceed $\log n\approx5.3$, every sampled design with $m=n$ had full rank, in agreement with the sufficiency statement of Theorem~\ref{thm:rank-growing}; these results are not shown in the figure. Similar experiments for $n=100$, for the null-column event and for full rank separately, are reported in \citet[Figures~4.1 and~4.2]{Wang2022Thesis}.
\end{ex}
\begin{figure}[t]
\centering
\includegraphics[width=\linewidth]{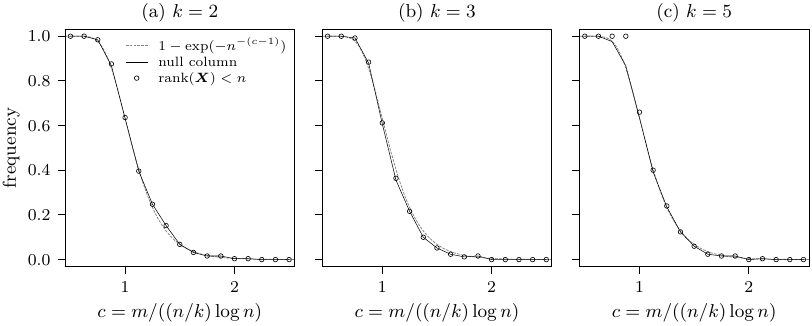}
\caption{Frequency of the event $\rank(\bm X)<n$ (markers) and of a null column (solid line) for uniform random designs with $n=200$ and (a)~$k=2$, (b)~$k=3$, (c)~$k=5$, versus $c=m/((n/k)\log n)$. The dashed curve is $1-\exp(-n^{-(c-1)})$.}
\label{fig:rank-threshold}
\end{figure}

Full rank ensures identifiability, but finite MLE existence also depends on the outcomes, which we consider next.

\subsection{MLE existence and uniqueness}\label{sec:random-existence}
We apply Proposition~\ref{pro:master} directly to the random experiments.
\begin{thm}\label{thm:suffrnd}
Assume $n\geq2k$ and $\epsilon\leq p_S(\bm\beta)\leq1-\epsilon$ for every $k$-set $S$, where $\epsilon\in(0,1/2]$. For any $a>0$, a unique MLE exists with probability at least $1-n^{-a}$ if
\begin{equation}
m\geq\frac{1}{\epsilon}\,\frac{k\binom nk\bigl((1+a)\log n+3\bigr)}{\binom{\lfloor n/2\rfloor-1}{k-1}}.\label{equ:suffrnd}
\end{equation}
For graphs with $n\geq40$, it suffices that
\begin{equation}
m\geq\frac{1}{\epsilon}\,n\bigl((1+a)\log n+3\bigr).\label{equ:suffrnd-graph}
\end{equation}
\end{thm}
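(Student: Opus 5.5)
The plan is to apply the random-design form of Proposition~\ref{pro:master}, computing each summand $\E[(1-\epsilon)^{W(P,N)}]$ exactly. The expectation is taken over the uniform draw of the $m$ groups. This reduces the problem to the deterministic counting estimates already used for Theorem~\ref{thm:hyper-sharp} and Corollary~\ref{cor:graph-sharp}.

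First, fix disjoint $P,N$, not both empty, and write $s=|P|+|N|$. Call a $k$-set \emph{constrained} if it meets one of $P,N$ and avoids the other. Then $W(P,N)=\sum_{j=1}^m\ind\{S_j\text{ constrained}\}$, and by~\eqref{equ:W} this holds with replication counted, since $W$ counts experiments. The groups $S_1,\ldots,S_m$ are i.i.d.\ uniform $k$-sets. Let $q(P,N)=K(P,N)/\binom nk$, where $K(P,N)$ is the number of constrained $k$-sets. Then
\[
\E\bigl[(1-\epsilon)^{W(P,N)}\bigr]=\bigl(1-\epsilon\,q(P,N)\bigr)^m\leq\exp\bigl(-\epsilon\,m\,q(P,N)\bigr).
\]
The factor $1-\epsilon q$ is the expected value of $(1-\epsilon)^{\ind}$ for one experiment. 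The outcome probabilities only enter through $\epsilon\leq p_S\leq1-\epsilon$, which was already used in Proposition~\ref{pro:master}.

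Second, $K(P,N)$ is exactly $W(P,N)$ for the balanced complete design with $r=1$. So the lower bound from the proof of Theorem~\ref{thm:hyper-sharp} applies and gives
\[
K(P,N)\geq\frac{s}{k}\binom{\lfloor n/2\rfloor-1}{k-1}.
\]
Under~\eqref{equ:suffrnd}, this yields $\epsilon m q(P,N)\geq s\bigl((1+a)\log n+3\bigr)$. This is the same exponent as in the complete-design case with $r$ at the threshold~\eqref{equ:hyper-sharp}. In effect, the random design behaves like a complete design with effective replication $m/\binom nk$.

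Third, I repeat the union bound of Theorem~\ref{thm:hyper-sharp}. There are at most $\binom ns2^s\leq(2n)^s$ sign patterns with support size $s$. Each contributes at most $n^{-(1+a)s}e^{-3s}$. Hence the total is at most
\[
\sum_{s\geq1}\bigl(2e^{-3}n^{-a}\bigr)^s\leq\frac{2e^{-3}n^{-a}}{1-2e^{-3}}\leq n^{-a}.
\]

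For graphs I use the exact count $K(P,N)\geq\binom{|P|}2+\binom{|N|}2+s(n-s)$ from Corollary~\ref{cor:graph-sharp}. Condition~\eqref{equ:suffrnd-graph} gives
\[
\epsilon\,m/\binom n2\geq2\bigl((1+a)\log n+3\bigr)/(n-1),
\]
which is exactly the threshold value of $\epsilon r$ in Corollary~\ref{cor:graph-sharp}. That proof bounds each term by $\exp(-\epsilon r K(P,N))$, so its concavity-in-$s$ estimate transfers verbatim with $r$ replaced by $m/\binom n2$, for $n\geq40$.

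The main point to check is that the earlier arguments depend on the design only through $\exp(-\epsilon rK(P,N))$. If they instead used $(1-\epsilon)^{rK}$ with integer $r$, I would rewrite them in the exponential form first, which only weakens nothing since $1-\epsilon\leq e^{-\epsilon}$.
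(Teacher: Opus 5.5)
Your proposal is correct and follows the paper's proof essentially step for step. The paper likewise treats $W(P,N)$ as a $\operatorname{Bin}(m,\pi(P,N))$ count, bounds $\E[(1-\epsilon)^{W(P,N)}]=(1-\epsilon\pi(P,N))^m\leq e^{-m\epsilon\pi(P,N)}$, lower bounds $\pi(P,N)$ by the $r=1$ case of the complete-design count, and reruns the geometric series of Theorem~\ref{thm:hyper-sharp} with $r$ replaced by $m/\binom nk$; for graphs it uses $\pi(P,N)\geq\omega(s)/\binom n2$ and substitutes $\epsilon m/\binom n2$ for $\epsilon r$ in the proof of Corollary~\ref{cor:graph-sharp}, exactly as you do.
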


The proof in Appendix~\ref{app:suffrnd} applies Proposition~\ref{pro:master} to the random design. For fixed disjoint $P,N$, the number of sampled experiments counted by $W(P,N)$ is binomial. Taking its probability generating function in~\eqref{equ:master} yields an exponential bound, followed by the same counting argument as for complete designs.

For fixed $k$, keeping only the leading term in $n$ of its right-hand side, condition~\eqref{equ:suffrnd} reads
\[
m\geq(1+a)\,\frac{1}{\epsilon}\,2^{k-1}n\log n(1+o(1)).
\]
For fixed $k$, this dependence on $n$ and $\epsilon$ cannot be improved, as the following necessary condition shows. For homogeneous edge probability $\epsilon$, the probability that every experiment involving a fixed vertex fails is $(1-k\epsilon/n)^m$. Since this event gives a separating vector, failure probability at most $n^{-a}$ requires the necessary condition
\begin{equation}
m\geq a\,\frac{1}{\log(1/(1-k\epsilon/n))}\,\log n
=a\,\frac{1}{\epsilon}\,\frac{n\log n}{k}(1+o(1))\label{equ:rndlb}
\end{equation}
when $k\epsilon/n\to0$. Thus for fixed $k$, $m$ of order $\epsilon^{-1}n\log n$ is both sufficient and necessary up to constants for a uniform guarantee over this parameter class. For graphs, this necessary condition appears in \citet[Section~4.3.1]{Wang2022Thesis}, where the corresponding sufficient condition, obtained through degree polytopes, requires $m$ of order $\epsilon^{-1}n^{5/4}(\log n)^{1/4}$.

\begin{ex}
This example illustrates the existence threshold for uniform random designs and its dependence on $\epsilon$, in relation to Theorem~\ref{thm:suffrnd} and the necessary condition~\eqref{equ:rndlb}. We take every edge probability equal to $\epsilon$, as in the derivation of~\eqref{equ:rndlb}, and measure the number of experiments by $x=mk\epsilon/(n\log n)$, in terms of which~\eqref{equ:rndlb} reads $x\geq a(1+o(1))$. We consider $k=2$ with $n=50,100,200$ and $k=3$ with $n=50,100$, each with $\epsilon=0.05,0.1,0.2,0.4$. For each configuration and each value of $x$, we sampled $200$ designs and outcomes and decided exactly whether a unique MLE exists: we checked that $\bm X$ has full rank and, by a linear program, that no separating vector exists.

In Figure~\ref{fig:exist-threshold}(a), for $k=2$ and $n=100$, the curves for the four values of $\epsilon$ coincide and follow $\exp(-n^{-(x-1)})$, the approximate probability that no vertex fails in all its experiments. Panel~(b) shows the same data against $mk\epsilon^2/(n\log n)$; there the curves separate, so the number of experiments needed scales as $\epsilon^{-1}$ rather than $\epsilon^{-2}$. Panel~(c) shows, for $\epsilon=0.1$, the same transition at $x\approx1$ for $k=2,3$ and $n$ between $50$ and $200$. The observed threshold therefore agrees with the necessary condition~\eqref{equ:rndlb}. The sufficient condition~\eqref{equ:suffrnd}, for failure probability at most $n^{-a}$, requires $x\geq(1+a)k2^{k-1}$ to leading order, so its constant is conservative.
\end{ex}
\begin{figure}[t]
\centering
\includegraphics[width=\linewidth]{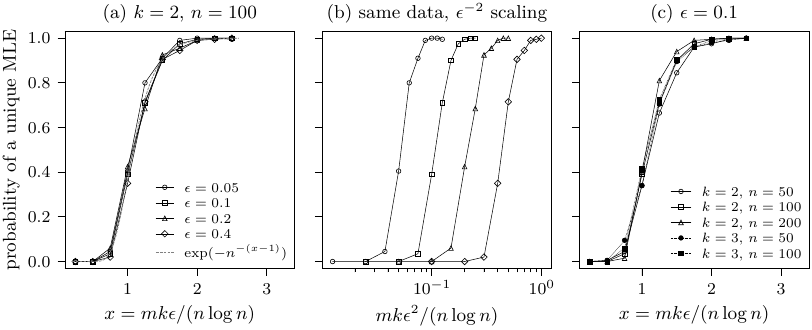}
\caption{Frequency of a unique MLE for uniform random designs with every edge probability equal to $\epsilon$. (a)~$k=2$, $n=100$, versus $x=mk\epsilon/(n\log n)$. (b)~The same data versus $mk\epsilon^2/(n\log n)$. (c)~$\epsilon=0.1$ and several $k$ and $n$, versus $x$.}
\label{fig:exist-threshold}
\end{figure}

\subsection{Estimation error}\label{sec:random-error}
By Theorem~\ref{lem:mse1}, the estimation error is controlled by the smallest eigenvalue $\lambda_1(\bm M)$ of the correlation matrix. Full rank, studied in Section~\ref{sec:random-rank}, ensures only that $\lambda_1(\bm M)>0$, and an error bound requires a lower bound on its magnitude. We therefore compare $\lambda_1(\bm M)$ with the smallest eigenvalue of its expectation. For $u\ne v$,
\[
\E[M_{u,v}]=\frac{k(k-1)}{n(n-1)}m,\qquad
\E[M_{u,u}]=\frac{k}{n}m.
\]
Therefore
\begin{equation}
\lambda_1(\E\bm M)=\frac{k(n-k)}{n(n-1)}m.\label{equ:lam1em}
\end{equation}
The following finite-sample bound, which appears as \citet[Proposition~4.2.1]{Wang2022Thesis}, shows that $\lambda_1(\bm M)$ concentrates around $\lambda_1(\E\bm M)$.
\begin{pro}\label{lem:lambdaconc}
For $2\leq k<n$ and $\delta\in(0,1)$,
\[
\P\left[\lambda_1(\bm M)\leq(1-\delta)\lambda_1(\E\bm M)\right]
\leq n\exp\left(-\frac{\delta^2m(n-k)}{2n(n-1)}\right).
\]
In particular, the failure probability is at most $n^{-a}$ if
\[
m\geq2(a+1)\,\frac{1}{\delta^2}\,\frac{n(n-1)\log n}{n-k}.
\]
\end{pro}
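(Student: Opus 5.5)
The natural tool here is the matrix Chernoff lower-tail bound \citep{Tropp} applied to $\bm M=\sum_{j=1}^m\bm x_j\bm x_j^\top$, a sum of independent positive semidefinite random matrices, each with largest eigenvalue $\norm{\bm x_j}^2=k$. Applied directly, however, the bound would have to be stated in terms of $\mu_{\min}=\lambda_1(\E\bm M)=mk(n-k)/(n(n-1))$ with per-summand norm bound $R=k$. This gives the exponent $\delta^2\mu_{\min}/(2R)=\delta^2m(n-k)/(2n(n-1))$, which is exactly the exponent in the statement, and the dimensional prefactor $n$ of the matrix Chernoff bound $\P[\lambda_{\min}\leq(1-\delta)\mu_{\min}]\leq n\bigl(e^{-\delta}/(1-\delta)^{1-\delta}\bigr)^{\mu_{\min}/R}\leq n\exp(-\delta^2\mu_{\min}/(2R))$ matches the factor $n$ in front. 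So the plan is simply to verify the hypotheses and substitute.

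I would carry out the steps in this order. First, each $\bm x_j\bm x_j^\top$ is PSD with $\lambda_{\max}=k$, and the summands are independent because the groups are drawn independently. Second, compute $\E[\bm x_j\bm x_j^\top]=\frac kn\bm I\cdot\frac{n-k}{n-1}+\frac{k(k-1)}{n(n-1)}\bm J$. This follows from the entries given before the statement: the diagonal is $k/n$ and the off-diagonal is $k(k-1)/(n(n-1))$, so the $\bm I$ coefficient is $k/n-k(k-1)/(n(n-1))=k(n-k)/(n(n-1))$. The smallest eigenvalue is attained on $\bm 1^\perp$, which yields \eqref{equ:lam1em}. Third, invoke the Chernoff lower tail together with the elementary inequality $-\delta-(1-\delta)\log(1-\delta)\leq-\delta^2/2$ for $\delta\in(0,1)$, which follows from the Taylor expansion $(1-\delta)\log(1-\delta)\geq-\delta+\delta^2/2$.

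For the ``in particular'' clause, I would set $n\exp(-\delta^2m(n-k)/(2n(n-1)))\leq n^{-a}$ and solve for $m$. This requires $\delta^2m(n-k)/(2n(n-1))\geq(a+1)\log n$, which is precisely the stated condition.

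The only real obstacle is using the correct normalization in the matrix Chernoff inequality, namely the ratio $\mu_{\min}/R$ with $R=k$ rather than $R=1$, so that the exponent comes out as $m(n-k)/(n(n-1))$ and not $mk(n-k)/(n(n-1))$. Here $R=k$ is tight, because $\norm{\bm x_j}^2=k$ exactly. Since the paper has not introduced the matrix Chernoff inequality, I would cite it as a standard result rather than reprove it.
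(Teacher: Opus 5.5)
Your proposal is correct and is essentially the paper's proof. The paper also applies the matrix Chernoff lower-tail inequality \citep[Theorem~5.1.1]{T15} to $\bm M=\sum_j\bm x_j\bm x_j^\top$, with per-summand norm bound $k$, and substitutes $\lambda_1(\E\bm M)=mk(n-k)/(n(n-1))$. Your computation of $\E[\bm x_j\bm x_j^\top]$ and your check that $e^{-\delta}(1-\delta)^{-(1-\delta)}\leq e^{-\delta^2/2}$ only fill in steps the paper leaves implicit.
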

The bound is in effect two-sided. By Proposition~\ref{pro:lambda-max}, every $k$-uniform design with $m$ experiments has $\lambda_1(\bm M)\leq mk(n-k)/(n(n-1))$, which equals $\lambda_1(\E\bm M)$ in~\eqref{equ:lam1em}. Hence $\lambda_1(\bm M)\leq\lambda_1(\E\bm M)$ for every realization of the random design, and, with the probability given in Proposition~\ref{lem:lambdaconc},
\[
(1-\delta)\lambda_1(\E\bm M)\leq\lambda_1(\bm M)\leq\lambda_1(\E\bm M).
\]
Thus uniform random designs nearly attain the largest value of $\lambda_1(\bm M)$ possible for a $k$-uniform design with $m$ experiments.

The proof in Appendix~\ref{app:suffrnd} applies the matrix Chernoff inequality \citep{T15} to $\bm M=\sum_j\bm x_j\bm x_j^\top$, a sum of independent positive semidefinite matrices of spectral norm $k$, and substitutes~\eqref{equ:lam1em}.
Proposition~\ref{lem:lambdaconc} also implies full rank, but only for $m$ of order $n\log n$. This exceeds the order $\max\{n,(n/k)\log n\}$ of Theorems~\ref{thm:rank-growing} and~\ref{thm:rank-fixed} by a factor of order $\min\{k,\log n\}$.

Combining Theorem~\ref{lem:mse1} with Proposition~\ref{lem:lambdaconc} gives an error bound for random designs. Fix $\eta\in(0,1)$, the confidence parameter in~\eqref{equ:l2bound}, and suppose that the sample-size condition of Proposition~\ref{lem:lambdaconc} holds. With probability at least $1-n^{-a}$, the design satisfies $\lambda_1(\bm M)\geq(1-\delta)\lambda_1(\E\bm M)$. Conditional on such a design, \eqref{equ:l2bound} holds with probability at least $1-\eta$ over the outcomes. Substituting~\eqref{equ:lam1em} shows that, with probability at least $1-n^{-a}-\eta$ over both the design and the outcomes,
\begin{equation}
\frac1{\sqrt n}\norm{\hat{\bm\beta}_b-\bm\beta}
\leq\frac1{q_{kb}}
\sqrt{\frac{n(n-1)}{(1-\delta)mk(n-k)}}
\left(\frac12+\sqrt{\frac{\log(1/\eta)}2}\right).\label{equ:rndub}
\end{equation}
Here $\norm{\bm\beta}_\infty\leq b$, as in Theorem~\ref{lem:mse1}. Choose any box-constrained maximizer when the design is rank deficient. Applying~\eqref{equ:expected-error} conditional on the eigenvalue event, and using $\norm{\hat{\bm\beta}_b-\bm\beta}^2/n\leq4b^2$ on its complement, gives
\[
\E\left[\frac1n\norm{\hat{\bm\beta}_b-\bm\beta}^2\right]
\leq\frac{n(n-1)}{4q_{kb}^2(1-\delta)mk(n-k)}+4b^2n^{-a}.
\]
For fixed $k,b$, $a>0$, and $\delta\in(0,1)$, the sample-size condition of Proposition~\ref{lem:lambdaconc} therefore ensures that the normalized error converges to zero in mean square, and hence in probability. The exact factor $(n-k)/(n-1)$ records the loss of information as the group size approaches $n$.

\begin{ex}
This example illustrates how the group size affects the estimation error of uniform random designs, through the smallest expected eigenvalue~\eqref{equ:lam1em}. It repeats Example~4.3.1 of \cite{Wang2022Thesis}, with the box-constrained estimate. We take $n=10,20,40$ vertices, half with parameter $-1$ and half with parameter $1$, and fix $m=n(n-1)/2$. For each $k$ between $2$ and $n-1$, we sample $100$ designs, each with a sample of the outcomes, and compute the estimate as in Example~\ref{ex:rmse-complete}.

Figure~\ref{fig:rmse} shows the root mean square error against $k/n$, with $95\%$ confidence intervals. The error is smallest for moderate $k/n$. It increases for small $k$, where each vertex takes part in fewer experiments, and for $k$ close to $n$, where $\lambda_1(\E\bm M)$ in~\eqref{equ:lam1em}, which is proportional to $k(n-k)$, decreases.
\end{ex}
\begin{figure}[t]
\centering
\includegraphics[width=\linewidth]{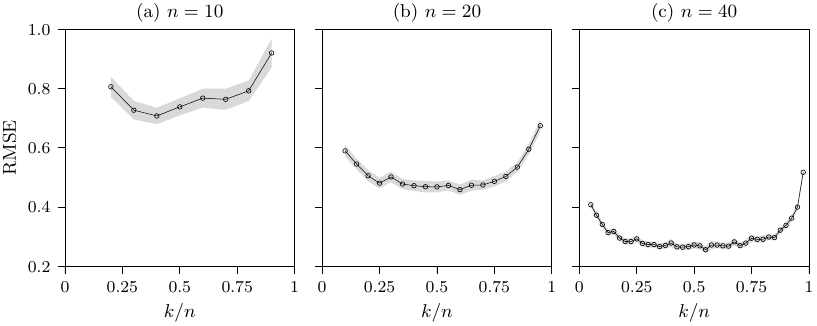}
\caption{Root mean square error versus $k/n$ for uniform random designs with $m=n(n-1)/2$, with $95\%$ confidence intervals: (a)~$n=10$, (b)~$n=20$, (c)~$n=40$.}
\label{fig:rmse}
\end{figure}

%% file: conc.tex
In this paper, we studied maximum likelihood estimation in the $\beta$-model for random hypergraphs under complete, arbitrary, and random designs. We obtained explicit finite-sample conditions for MLE existence and uniqueness, through degree polytopes, interior representations, and separating vectors. For balanced complete designs and uniform random designs, the conditions obtained through separating vectors are linear in $\epsilon^{-1}$ and, for fixed $k$, match the necessary conditions~\eqref{equ:complete-lower} and~\eqref{equ:rndlb} up to constants. We bounded the estimation error for arbitrary designs in terms of the smallest eigenvalue of the correlation matrix, showed that the resulting rate is minimax optimal for balanced complete designs, and showed that uniform random designs nearly maximize this eigenvalue. Finally, we established full-rank conditions for random designs, with the sharp threshold for fixed $k$.

Several questions remain open. First, the numerical results suggest that the constants in our existence conditions can be improved. For uniform random designs with homogeneous edge probabilities, a unique MLE appears to exist with probability tending to one once $m\geq(1+\delta)\epsilon^{-1}(n/k)\log n$, for any fixed $\delta>0$, which is where, with high probability, no vertex fails in all its experiments; Theorem~\ref{thm:suffrnd} requires a constant of order $k2^{k-1}$ in place of $1$. For the rank, the factor $2$ in Theorem~\ref{thm:rank-growing} matters only when $k$ is of the order $\log n$, and removing it there is open. Second, our conditions are uniform over all parameters with edge probabilities in $[\epsilon,1-\epsilon]$; conditions that depend on the parameter may be sharper for heterogeneous parameters. Third, the error bound of Theorem~\ref{lem:mse1} depends on the design only through $\lambda_1(\bm M)$ and on the parameter box through $\sigma'(kb)$; bounds in terms of the whole spectrum of $\bm M$ and the curvature at the true parameter, together with limit theory for incomplete designs, would describe the error more precisely. Finally, by Proposition~\ref{pro:lambda-max}, designs in which every pair of vertices takes part together in the same number of experiments maximize $\lambda_1(\bm M)$. Designs that are optimal for criteria depending on the parameter or for MLE existence, and designs chosen adaptively from earlier outcomes or formed by self-selection of the vertices, remain to be studied.

%% file: appendix.tex
\section{Preliminary results}\label{app:prelim}
\subsection{Proof of Proposition~\ref{thm:mle-overlap}}
We give a direct proof. Write $f=-\ell$ and $\bm a_j=(2y_j-1)\bm x_j$, so
\[
f(\bm\beta)=\sum_j\log(1+e^{-\bm a_j^\top\bm\beta}).
\]
If $\bm\alpha\ne0$ separates the outcomes, $\bm a_j^\top\bm\alpha\geq0$ for every $j$. The function $f(\bm\beta+t\bm\alpha)$ is nonincreasing in $t$. If any inequality is strict, the function decreases strictly, so it has no minimizer. If every product vanishes, it is constant along a line, and a minimizer is nonunique.

Conversely, under overlap, every unit vector $\bm\alpha$ has $\bm a_j^\top\bm\alpha<0$ for some $j$. The continuous function
\[
h(\bm\alpha)=\max_j\{-\bm a_j^\top\bm\alpha\}
\]
is positive on the unit sphere, and hence its minimum $h_*>0$. Using $\log(1+e^z)\geq\max\{0,z\}$ gives $f(\bm\beta)\geq h_*\norm{\bm\beta}$. Thus $f$ is coercive, that is, $f(\bm\beta)\to\infty$ as $\norm{\bm\beta}\to\infty$. A continuous coercive function attains its minimum, because its sublevel sets are bounded and closed. Overlap implies full rank, so~\eqref{equ:hessian} gives strict convexity and uniqueness.

\subsection{The convex-support criterion without full rank}
For completeness, we also derive the relative-interior criterion of Section~\ref{sec:model} when $\bm X$ need not have full column rank. Let $\ker\bm X=\{\bm\alpha\in\reals^n:\bm X\bm\alpha=\bm0\}$ be the kernel of $\bm X$. The likelihood depends on $\bm\beta$ only through $\bm X\bm\beta$, so it is constant along $\ker\bm X$, and it suffices to consider $\bm\beta$ in the orthogonal complement $(\ker\bm X)^\perp$. A proper supporting face of $C_{\bm X}$ through $\bm d(\bm y)$ corresponds to a separating vector whose products with the rows are not all zero. If there is no such face, the argument above, applied to unit vectors in $(\ker\bm X)^\perp$, shows that $f$ is coercive on this subspace. The maximizers in $\reals^n$ are then a single point of $(\ker\bm X)^\perp$ plus $\ker\bm X$.

\section{Graph degree-polytope condition}\label{app:mlesuff}
\subsection{Proof of Theorem~\ref{thm:mlesuff}}
The proof has two steps. The first is deterministic: since the inequalities defining $P_n$ are affine in the degree sequence, the normalized degree sequence can reach the boundary of $P_n$ only if some coordinate deviates from its mean by at least $(n-1)\mathcal E(\E\tilde{\bm d})$. The second is probabilistic: Hoeffding's inequality and a union bound over the vertices show that, under~\eqref{equ:edcond}, such a deviation is unlikely.

By~\eqref{equ:fst}, $f(S,T,\tilde{\bm d},n)=|S|(n-1-|T|)-\sum_{v\in S}\tilde d_v+\sum_{v\in T}\tilde d_v$ is an affine function of $\tilde{\bm d}$, in which $\tilde d_v$ has coefficient $-1$ for $v\in S$, coefficient $1$ for $v\in T$, and coefficient $0$ otherwise. Hence, for every $(S,T)\in\Omega$,
\[
f(S,T,\tilde{\bm d},n)-f(S,T,\E\tilde{\bm d},n)
=-\sum_{v\in S}(\tilde d_v-\E \tilde d_v)+\sum_{v\in T}(\tilde d_v-\E \tilde d_v),
\]
and therefore
\begin{equation}
|f(S,T,\tilde{\bm d},n)-f(S,T,\E\tilde{\bm d},n)|
\leq|S\cup T|\norm{\tilde{\bm d}-\E\tilde{\bm d}}_\infty.\label{equ:fdiff}
\end{equation}

The normalized degree sequence $\tilde{\bm d}$ always lies in $P_n$, since $P_n=Z(E)$ for complete graph designs. Suppose that $\tilde{\bm d}\notin\Int P_n$. By~\eqref{equ:eglinear}, one of the inequalities defining $P_n$ then holds with equality at $\tilde{\bm d}$. If this is the inequality for $(S,T)\in\Omega$, then $f(S,T,\tilde{\bm d},n)=0$, and~\eqref{equ:fdiff} and the definition~\eqref{equ:margin} of $\mathcal E$ give
\[
|S\cup T|\norm{\tilde{\bm d}-\E\tilde{\bm d}}_\infty
\geq f(S,T,\E\tilde{\bm d},n)
\geq|S\cup T|(n-1)\mathcal E(\E\tilde{\bm d}).
\]
If it is a coordinate inequality, for example $\tilde d_v=0$, then $\norm{\tilde{\bm d}-\E\tilde{\bm d}}_\infty\geq\E \tilde d_v\geq(n-1)\mathcal E(\E\tilde{\bm d})$, again by~\eqref{equ:margin}, and the case $\tilde d_v=n-1$ is the same. In every case,
\begin{equation}
\norm{\tilde{\bm d}-\E\tilde{\bm d}}_\infty
\geq(n-1)\mathcal E(\E\tilde{\bm d}).\label{equ:dev-margin}
\end{equation}

We now bound the probability of~\eqref{equ:dev-margin}. For a fixed vertex $u$,
\[
\tilde d_u=\sum_{v\ne u}\frac{\tilde y_{\{u,v\}}}{M_{u,v}}
=\sum_{v\ne u}\sum_{j:S_j=\{u,v\}}\frac{Y_j}{M_{u,v}},
\]
which is a sum of independent random variables, one for each experiment involving $u$. The term of an experiment on $\{u,v\}$ takes values in $\{0,1/M_{u,v}\}$, an interval of length $1/M_{u,v}$. Hoeffding's inequality \citep{hoeffding} states that a sum of independent random variables taking values in intervals of lengths $c_1,c_2,\ldots$ deviates from its mean by at least $x$ with probability at most $2\exp(-2x^2/\sum_jc_j^2)$. Here there are $M_{u,v}$ terms of length $1/M_{u,v}$ for each $v\ne u$, so
\[
\sum_jc_j^2=\sum_{v\ne u}M_{u,v}\frac1{M_{u,v}^2}=\sum_{v\ne u}\frac1{M_{u,v}}=\frac{n-1}{H_u(\bm M)}\leq\frac{n-1}{H(\bm M)}.
\]
A union bound over the $n$ vertices gives
\[
\P[\norm{\tilde{\bm d}-\E\tilde{\bm d}}_\infty\geq x]
\leq2n\exp\left(-\frac{2H(\bm M)x^2}{n-1}\right).
\]
At $x=(n-1)\mathcal E(\E\tilde{\bm d})$, the exponent equals $-2H(\bm M)(n-1)\mathcal E(\E\tilde{\bm d})^2$, which is at most $-2c\log n$ by condition~\eqref{equ:edcond}, so the probability is at most $2n^{1-2c}$. By~\eqref{equ:dev-margin}, $\tilde{\bm d}\in\Int P_n$ with probability at least $1-2n^{1-2c}$, and on this event the interior criterion of Section~\ref{sec:model} gives a unique MLE. For a balanced design, $H(\bm M)=r$ and $m=r\binom n2$, and~\eqref{equ:edcond} becomes $m\geq(c/2)\,\mathcal E(\E[\tilde{\bm d}])^{-2}\,n\log n$.

\subsection{Proof of Corollary~\ref{thm:eps}}
We show that $\mathcal E(\E\tilde{\bm d})\geq\epsilon/4$ and bound each term in~\eqref{equ:margin}. For the coordinate terms, $\E \tilde d_v=\sum_{u\ne v}p_{u,v}\geq(n-1)\epsilon$, since each of the $n-1$ probabilities is at least $\epsilon$, and $n-1-\E \tilde d_v=\sum_{u\ne v}(1-p_{u,v})\geq(n-1)\epsilon$, since each is at most $1-\epsilon$.

For the mixed terms, fix $(S,T)\in\Omega$, put $s=|S|$, $t=|T|$, and $Z_0=V\setminus(S\cup T)$, so that $|Z_0|=n-s-t$. By~\eqref{equ:fst},
\[
f(S,T,\E\tilde{\bm d},n)=s(n-1-t)-\sum_{u\in S}\sum_{v\ne u}p_{u,v}+\sum_{u\in T}\sum_{v\ne u}p_{u,v}.
\]
Write $s(n-1-t)=\sum_{u\in S}\sum_{v\notin T,v\ne u}1$, and split each inner sum according to whether $v$ lies in $S$, $T$, or $Z_0$. The terms $p_{u,v}$ with $u\in S$ and $v\in T$ appear once with each sign and cancel. Each pair inside $S$ or inside $T$ appears twice, once for each of its vertices. This gives
\begin{align*}
f(S,T,\E\tilde{\bm d},n)
&=2\sum_{\{u,v\}\subseteq S}(1-p_{u,v})
 +\sum_{u\in S,v\in Z_0}(1-p_{u,v})\\
&\quad+2\sum_{\{u,v\}\subseteq T}p_{u,v}
 +\sum_{u\in T,v\in Z_0}p_{u,v}.
\end{align*}
Every term is at least $\epsilon$. Counting the terms,
\[
f(S,T,\E\tilde{\bm d},n)\geq\epsilon\bigl(s(s-1)+t(t-1)+(s+t)(n-s-t)\bigr)=\epsilon\bigl((s+t)(n-1)-2st\bigr).
\]
Dividing by $s+t$ gives the first inequality in
\[
\frac{f(S,T,\E\tilde{\bm d},n)}{s+t}
\geq\epsilon\left(n-1-\frac{2st}{s+t}\right)
\geq\epsilon\left(\frac n2-1\right)
\geq\frac{\epsilon(n-1)}4.
\]
The second inequality uses $st/(s+t)\leq(s+t)/4\leq n/4$, which follows from $4st\leq(s+t)^2$. The third is equivalent to $n\geq3$. By~\eqref{equ:margin}, $\mathcal E(\E\tilde{\bm d})\geq\min\{\epsilon,\epsilon/4\}=\epsilon/4$. Since $H(\bm M)\geq r$, the condition on $r$ in Corollary~\ref{thm:eps} implies~\eqref{equ:edcond}, and Theorem~\ref{thm:mlesuff} gives the result.

\section{An interior representation for complete designs}\label{app:hyper}
\subsection{Proof of Lemma~\ref{lem:complete-rank}}
Recall that $\bm X$ has full column rank if and only if its correlation matrix $\bm M=\bm X^\top\bm X$ is positive definite, because $\bm v^\top\bm M\bm v=\norm{\bm X\bm v}^2$. Replication only repeats rows of $\bm X$ and does not change its column rank, so it suffices to consider one experiment on every $k$-set. Every vertex lies in $\binom{n-1}{k-1}$ of the $k$-sets and every pair of distinct vertices in $\binom{n-2}{k-2}$, so the correlation matrix has these numbers as its diagonal and off-diagonal entries. Since $\binom{n-1}{k-1}=\binom{n-2}{k-1}+\binom{n-2}{k-2}$, it equals $\binom{n-2}{k-1}\bm I+\binom{n-2}{k-2}\bm J$, where $\bm J$ is the all-ones matrix. For every $\bm v\ne\bm0$,
\[
\bm v^\top\left(\binom{n-2}{k-1}\bm I+\binom{n-2}{k-2}\bm J\right)\bm v
=\binom{n-2}{k-1}\norm{\bm v}^2+\binom{n-2}{k-2}(\bm1^\top\bm v)^2>0,
\]
because $\binom{n-2}{k-1}>0$ for $k\leq n-1$. Hence the correlation matrix is positive definite. By the equivalence between positive definiteness of $\bm M$ and full column rank of $\bm X$ noted at the start of the proof, the complete design therefore has full column rank.

\subsection{Proof of Proposition~\ref{pro:projection-complete}}
The proof constructs an interior representation of $\bm d(\bm y)$. It starts from the vector $\bm p$ of success probabilities, which represents the expected degrees, and adds the correction $\bm q$ of least norm that reproduces the observed degrees. If every coordinate of $\bm q$ is smaller than $\epsilon$ in absolute value, then all coordinates of $\bm p+\bm q$ lie in $(0,1)$. Most of the proof bounds the probability that some coordinate of $\bm q$ is large, using the symmetry of the complete design and Hoeffding's inequality.

\paragraph{Step 1: the construction}
Let $p_j=p_{S_j}(\bm\beta)$ be the success probability of experiment $j$, and let $\bm p=(p_1,\ldots,p_m)$. By assumption, every $p_j$ lies in $[\epsilon,1-\epsilon]$. Since $\E[Y_j]=p_j$, the vector $\bm p$ satisfies $\bm X^\top\bm p=\E[\bm d(\bm Y)]$: it represents the expected degree sequence, with weights in $[\epsilon,1-\epsilon]$. We correct it so that it represents the observed degree sequence $\bm d(\bm y)=\bm X^\top\bm y$ instead.

The complete design has full column rank (Section~\ref{sec:complete-hyper}), so $\bm M=\bm X^\top\bm X$ is invertible. Let
\[
\bm H=\bm X\bm M^{-1}\bm X^\top
\qquad\hbox{and}\qquad
\bm q=\bm H(\bm y-\bm p).
\]
The $m\times m$ matrix $\bm H$ is symmetric, and it is idempotent, that is, $\bm H^2=\bm H$, because
\[
\bm H^2=\bm X\bm M^{-1}(\bm X^\top\bm X)\bm M^{-1}\bm X^\top=\bm X\bm M^{-1}\bm X^\top=\bm H.
\]
It also satisfies $\bm X^\top\bm H=(\bm X^\top\bm X)\bm M^{-1}\bm X^\top=\bm X^\top$. Hence $\bm X^\top\bm q=\bm X^\top(\bm y-\bm p)$, and the vector $\bm w=\bm p+\bm q$ satisfies
\[
\bm X^\top\bm w=\bm X^\top\bm p+\bm X^\top(\bm y-\bm p)=\bm X^\top\bm y=\bm d(\bm y).
\]
In regression terms, $\bm H$ is the hat matrix, the orthogonal projection of $\reals^m$ onto the column space of $\bm X$, and $\bm q$ is the vector of least Euclidean norm that satisfies $\bm X^\top\bm q=\bm X^\top(\bm y-\bm p)$. If every coordinate of $\bm q$ satisfies $|q_j|<\epsilon$, then every coordinate of $\bm w$ lies in $(0,1)$, because $p_j\in[\epsilon,1-\epsilon]$. In that case, $\bm w$ is an interior representation of $\bm d(\bm y)$. It remains to bound the probability that $|q_j|\geq\epsilon$ for some $j$.

\paragraph{Step 2: concentration of each coordinate of $\bm q$}
The $j$th coordinate of $\bm q$ is
\[
q_j=\sum_lH_{j,l}(Y_l-p_l),
\]
a sum of independent random variables with mean zero. The $l$th term takes values in an interval of length $|H_{j,l}|$, because $Y_l-p_l\in\{-p_l,1-p_l\}$. Hoeffding's inequality \citep{hoeffding} therefore gives
\[
\P[|q_j|\geq\epsilon]\leq2\exp\left(-\frac{2\epsilon^2}{\sum_lH_{j,l}^2}\right).
\]
The sum in the exponent is a diagonal entry of $\bm H$. Indeed, by the definition of the matrix product, $(\bm H^2)_{j,j}=\sum_lH_{j,l}H_{l,j}$. Since $\bm H$ is symmetric, $H_{l,j}=H_{j,l}$, so $(\bm H^2)_{j,j}=\sum_lH_{j,l}^2$. Since $\bm H$ is idempotent, $(\bm H^2)_{j,j}=H_{j,j}$. Hence
\[
\sum_lH_{j,l}^2=H_{j,j}=\bm x_j^\top\bm M^{-1}\bm x_j=:h_j,
\qquad\hbox{and}\qquad
\P[|q_j|\geq\epsilon]\leq2\exp(-2\epsilon^2/h_j).
\]
The number $h_j$, the $j$th diagonal entry of the hat matrix, is called the leverage of experiment $j$ in regression. It measures how strongly the outcome of experiment $j$ influences the fitted values, and here it determines how much the correction $q_j$ fluctuates.

\paragraph{Step 3: the leverages of a complete design}
Consider first one experiment on every $k$-set. Its correlation matrix is $\bm G_{n,k}$ in~\eqref{equ:complete-Gram}, which is positive definite for $2\leq k<n$, and the leverage of the experiment on $S$ is $\ind_S^\top\bm G_{n,k}^{-1}\ind_S$. Relabelling the vertices permutes the $k$-sets and leaves $\bm G_{n,k}$ unchanged, so this leverage is the same for every $S$. The leverages sum to the trace of the hat matrix:
\[
\sum_S\ind_S^\top\bm G_{n,k}^{-1}\ind_S
=\tr\Bigl(\bm G_{n,k}^{-1}\sum_S\ind_S\ind_S^\top\Bigr)
=\tr(\bm G_{n,k}^{-1}\bm G_{n,k})=n.
\]
Hence each of the $\binom nk$ leverages equals $n/\binom nk=k/\binom{n-1}{k-1}$.

Now let every candidate edge have replication number at least $r$. Then
\[
\bm M=\sum_SM_S\ind_S\ind_S^\top\succeq r\sum_S\ind_S\ind_S^\top=r\bm G_{n,k},
\]
because $M_S\geq r$ and each matrix $\ind_S\ind_S^\top$ is positive semidefinite; here $\bm A\succeq\bm B$ means that $\bm A-\bm B$ is positive semidefinite. For positive definite matrices, $\bm A\succeq\bm B$ implies $\bm B^{-1}\succeq\bm A^{-1}$ \citep{HornJohnson2013}. Hence $\bm M^{-1}\preceq(r\bm G_{n,k})^{-1}$, and for an experiment $j$ on the candidate edge $S_j$,
\begin{equation}
h_j=\bm x_j^\top\bm M^{-1}\bm x_j
\leq\frac1r\ind_{S_j}^\top\bm G_{n,k}^{-1}\ind_{S_j}
=\frac{k}{r\binom{n-1}{k-1}}.\label{equ:leverage}
\end{equation}

\paragraph{Step 4: conclusion}
Experiments on the same candidate edge have identical rows of $\bm H$, and hence identical values of $q_j$, so there are at most $\binom nk$ distinct events $\{|q_j|\geq\epsilon\}$. By Step~2 and~\eqref{equ:leverage}, each has probability at most $2\exp(-2r\epsilon^2\binom{n-1}{k-1}/k)$, and a union bound gives the probability bound~\eqref{equ:projection-bound}.

On the complementary event, $\bm w=\bm p+\bm q\in(0,1)^m$ is an interior representation of $\bm d(\bm y)$, and a unique MLE exists by the criterion of Section~\ref{sec:model}. This can also be verified directly. Since $\bm X^\top\bm w=\bm X^\top\bm y$, the negative log-likelihood can be written with $\bm w$ in place of $\bm y$. Put $\zeta=\min_j\{w_j,1-w_j\}>0$. For each $z$, $\log(1+e^z)-w_jz\geq\zeta|z|$, since $\log(1+e^z)\geq\max\{0,z\}$. Hence
\begin{align*}
-\ell(\bm\theta)
&=\sum_j\bigl[\log(1+e^{\bm x_j^\top\bm\theta})-w_j\bm x_j^\top\bm\theta\bigr]\\
&\geq\zeta\sum_j|\bm x_j^\top\bm\theta|
\geq\zeta\norm{\bm X\bm\theta}
\geq\zeta\sqrt{\lambda_1(\bm M)}\norm{\bm\theta},
\end{align*}
using $\sum_j|z_j|\geq(\sum_jz_j^2)^{1/2}$ in the second inequality. The negative log-likelihood is therefore coercive, and it is strictly convex because $\bm X$ has full column rank, so its minimum exists and is unique. Finally, solving the probability bound~\eqref{equ:projection-bound} for $r$ gives~\eqref{equ:projection-condition}.

\subsection{Proof of Corollary~\ref{cor:hyper}}\label{app:hyper-constants}
The proof shows that the hypothesis~\eqref{equ:epscond} implies the condition~\eqref{equ:projection-condition} of Proposition~\ref{pro:projection-complete}. The only ingredient is a lower bound on $\binom{n-1}{k-1}$.

\paragraph{Step 1: a lower bound on $\binom{n-1}{k-1}$}
We show that, for $k\geq2$ and $n>4k$,
\begin{equation}
\binom{n-1}{k-1}\geq\frac{k^2}{4}\left(\frac nk\right)^{k-1}.\label{equ:D-lower}
\end{equation}
Writing out the binomial coefficient,
\[
\binom{n-1}{k-1}=\binom{n-1}{k-1}=\frac{\prod_{i=1}^{k-1}(n-i)}{(k-1)!}\geq\frac{(n-k+1)^{k-1}}{(k-1)!}\geq\frac{(3n/4)^{k-1}}{(k-1)!},
\]
where the last inequality uses $n-k+1\geq3n/4$, which holds because $n>4k$. The right-hand side equals $R_k(n/k)^{k-1}$ with $R_k=(3k/4)^{k-1}/(k-1)!$, so~\eqref{equ:D-lower} follows from $R_k\geq k^2/4$. The values $R_2=3/2\geq1$ and $R_3=81/32\geq9/4$ can be checked directly. For $k\geq3$,
\[
\frac{R_{k+1}}{R_k}
=\frac{(3(k+1)/4)^k}{k!}\cdot\frac{(k-1)!}{(3k/4)^{k-1}}
=\frac34\Bigl(1+\frac1k\Bigr)^k
=\Bigl(1+\frac1k\Bigr)^2\cdot\frac34\Bigl(1+\frac1k\Bigr)^{k-2}.
\]
Bernoulli's inequality states that $(1+x)^t\geq1+tx$ for every real $x\geq-1$ and every integer $t\geq0$. With $x=1/k$ and $t=k-2$, it gives $(1+1/k)^{k-2}\geq2-2/k$, and $\frac34(2-2/k)\geq1$ for $k\geq3$. Hence $R_{k+1}\geq(1+1/k)^2R_k$. If $R_k\geq k^2/4$, then $R_{k+1}\geq(1+1/k)^2k^2/4=(k+1)^2/4$, and induction from $k=3$ yields $R_k\geq k^2/4$ for all $k\geq2$.

\paragraph{Step 2: conclusion}
Write $A=(c+1)\log n+\log4$. The hypothesis~\eqref{equ:epscond} of Corollary~\ref{cor:hyper} states that $\epsilon\geq\sqrt{2A}\,(k/n)^{(k-1)/2}$, that is,
\[
\epsilon^2\geq2A\left(\frac kn\right)^{k-1}.
\]
Combining this with~\eqref{equ:D-lower}, and using $r\geq1$,
\[
\frac{2r\epsilon^2}{k}\binom{n-1}{k-1}
\geq\frac2k\cdot2A\left(\frac kn\right)^{k-1}\cdot\frac{k^2}{4}\left(\frac nk\right)^{k-1}
=kA.
\]
Moreover,
\[
kA=k(c+1)\log n+k\log4\geq k\log n+c\log n\geq\log\left(2\binom nk\right)+c\log n.
\]
In the first inequality, $k(c+1)\log n=k\log n+kc\log n\geq k\log n+c\log n$ because $k\geq1$, and the term $k\log4$ is nonnegative. In the second, $\binom nk\leq n^k/k!\leq n^k/2$ for $k\geq2$, so $\log(2\binom nk)\leq k\log n$. Together, $2r\epsilon^2\binom{n-1}{k-1}/k\geq\log(2\binom nk)+c\log n$, which is condition~\eqref{equ:projection-condition} rearranged. Proposition~\ref{pro:projection-complete} therefore gives failure probability at most $n^{-c}$.

\section{Separating-vector bounds}\label{app:separation}
\subsection{Proof of Lemma~\ref{lm:certificate}}
Let $\bm\alpha\ne\bm0$ be a separating vector. By Definition~\ref{def:overlap}, $\bm x_j^\top\bm\alpha\geq0$ for every experiment $j$ with $y_j=1$, and $\bm x_j^\top\bm\alpha\leq0$ for every experiment $j$ with $y_j=0$. Let $P=\{v:\alpha_v>0\}$, $N=\{v:\alpha_v<0\}$, and $Z_0=V\setminus(P\cup N)$, so that $\alpha_v=0$ for $v\in Z_0$.

Consider an experiment $j$ in the first sum of~\eqref{equ:W}, so that its group $S_j$ lies in $P\cup Z_0$ and meets $P$. The coordinates of $\bm\alpha$ on $S_j$ are positive on $S_j\cap P\ne\emptyset$ and zero on $S_j\cap Z_0$, so
\[
\bm x_j^\top\bm\alpha=\sum_{v\in S_j}\alpha_v=\sum_{v\in S_j\cap P}\alpha_v>0.
\]
If $y_j$ were $0$, the separating vector would need $\bm x_j^\top\bm\alpha\leq0$, a contradiction. Hence $y_j=1$. In the same way, an experiment $j$ in the second sum has $S_j\subseteq N\cup Z_0$ and $S_j\cap N\ne\emptyset$, so $\bm x_j^\top\bm\alpha=\sum_{v\in S_j\cap N}\alpha_v<0$, which forces $y_j=0$.

\subsection{Proof of Proposition~\ref{pro:master}}
The proof shows that the event that no unique finite MLE exists is contained in a union of simple events, one for each sign pattern $(P,N)$: the event that all experiments counted by $W(P,N)$ have prescribed outcomes. Each such event requires $W(P,N)$ independent outcomes to take given values, so its probability is at most $(1-\epsilon)^{W(P,N)}$, and a union bound gives~\eqref{equ:master}.

Consider first a fixed design, and let $A$ be the event that no unique finite MLE exists. For disjoint $P,N\subseteq V$, not both empty, put $Z_0=V\setminus(P\cup N)$, and let $J_1(P,N)$ be the set of experiments $j$ with $S_j\subseteq P\cup Z_0$ and $S_j\cap P\ne\emptyset$, and $J_0(P,N)$ the set of experiments $j$ with $S_j\subseteq N\cup Z_0$ and $S_j\cap N\ne\emptyset$. Let $A_{P,N}$ be the event that $Y_j=1$ for every $j\in J_1(P,N)$ and $Y_j=0$ for every $j\in J_0(P,N)$.

We first show that $A\subseteq\bigcup_{(P,N)}A_{P,N}$, where the union is over all disjoint $P,N\subseteq V$ that are not both empty. This inclusion means that every outcome vector $\bm y$ for which no unique finite MLE exists belongs to at least one of the events $A_{P,N}$. The pattern $(P,N)$ depends on $\bm y$, which is why the union runs over all patterns.

Fix an outcome vector $\bm y$ in $A$. By Proposition~\ref{thm:mle-overlap}, no unique finite MLE exists exactly when $(\bm X,\bm y)$ violates the overlapping condition, that is, when a separating vector $\bm\alpha\ne\bm0$ exists. By Definition~\ref{def:overlap}, such a vector satisfies $\bm x_j^\top\bm\alpha\geq0$ for every $j$ with $y_j=1$ and $\bm x_j^\top\bm\alpha\leq0$ for every $j$ with $y_j=0$. Put $P=\{v:\alpha_v>0\}$ and $N=\{v:\alpha_v<0\}$. These sets are disjoint, and they are not both empty because $\bm\alpha\ne\bm0$; the coordinates of $\bm\alpha$ on $Z_0=V\setminus(P\cup N)$ are zero.

By Lemma~\ref{lm:certificate}, every experiment in $J_1(P,N)$ has outcome $1$ and every experiment in $J_0(P,N)$ has outcome $0$: each experiment in $J_1(P,N)$ has $\bm x_j^\top\bm\alpha>0$, which a separating vector allows only if $y_j=1$, and each experiment in $J_0(P,N)$ has $\bm x_j^\top\bm\alpha<0$, which it allows only if $y_j=0$. Thus $\bm y$ belongs to $A_{P,N}$ for this particular pattern $(P,N)$, and since $\bm y\in A$ was arbitrary, $A\subseteq\bigcup_{(P,N)}A_{P,N}$.

Next, the sets $J_1(P,N)$ and $J_0(P,N)$ are disjoint: an experiment in $J_1(P,N)$ has $S_j\subseteq P\cup Z_0$, so $S_j\cap N=\emptyset$, and hence it is not in $J_0(P,N)$. Their total size is
\[
|J_1(P,N)|+|J_0(P,N)|=\sum_{S\in E:\,S\subseteq P\cup Z_0,\ S\cap P\ne\emptyset}M_S+\sum_{S\in E:\,S\subseteq N\cup Z_0,\ S\cap N\ne\emptyset}M_S=W(P,N),
\]
by~\eqref{equ:W}. The outcomes are independent, $\P[Y_j=1]=p_{S_j}(\bm\beta)\leq1-\epsilon$, and $\P[Y_j=0]=1-p_{S_j}(\bm\beta)\leq1-\epsilon$. Hence
\[
\P[A_{P,N}]
=\prod_{j\in J_1(P,N)}p_{S_j}(\bm\beta)\prod_{j\in J_0(P,N)}\bigl(1-p_{S_j}(\bm\beta)\bigr)
\leq(1-\epsilon)^{W(P,N)}.
\]
A union bound over the pairs $(P,N)$ gives
\[
\P[A]\leq\sum_{(P,N)}\P[A_{P,N}]\leq\sum_{(P,N)}(1-\epsilon)^{W(P,N)},
\]
which is~\eqref{equ:master}.

Now let the design be random, and suppose that, conditional on the design $S_1,\ldots,S_m$, the outcomes are independent with $\P[Y_j=1\mid S_1,\ldots,S_m]=p_{S_j}(\bm\beta)$. The argument above then applies conditionally on the design, with $W(P,N)$ computed from the sampled design, and gives
\[
\P[A\mid S_1,\ldots,S_m]\leq\sum_{(P,N)}(1-\epsilon)^{W(P,N)}.
\]
Taking expectations over the design proves the second statement.

\subsection{Proof of Theorem~\ref{thm:hyper-sharp}}
By Proposition~\ref{pro:master},
\[
\P[\text{no unique finite MLE}]\leq\sum_{(P,N)}(1-\epsilon)^{W(P,N)},
\]
where the sum runs over all disjoint $P,N\subseteq V$ that are not both empty. The proof bounds this sum in three steps. First, we show that in a complete design $W(P,N)$ grows at least linearly in the size $s=|P|+|N|$ of the support of the pattern. Second, we count the patterns with support of size $s$; their number grows only exponentially in $s$. Third, we group the terms of the sum by $s$: under condition~\eqref{equ:hyper-sharp}, the decay of each term outweighs the growth of their number, and the sum is bounded by a convergent geometric series.

\paragraph{Step 1: a lower bound on $W(P,N)$}
Put $B_{n,k}=\binom{\lfloor n/2\rfloor-1}{k-1}$. We show that, for every pattern $(P,N)$ with $|P|+|N|=s$,
\begin{equation}
W(P,N)\geq\frac{rs}kB_{n,k}.\label{equ:W-lower}
\end{equation}
Recall from~\eqref{equ:W} that, with $Z_0=V\setminus(P\cup N)$,
\[
W(P,N)=\sum_{S\in E:\,S\subseteq P\cup Z_0,\ S\cap P\ne\emptyset}M_S
+\sum_{S\in E:\,S\subseteq N\cup Z_0,\ S\cap N\ne\emptyset}M_S,
\]
the number of experiments on candidate edges that meet one of the two sets and avoid the other. In a complete $k$-uniform design, $E$ consists of all $k$-subsets of $V$, and $M_S\geq r$ for every $S\in E$.

Write $p=|P|$ and $q=|N|$, so that $s=p+q$. The first sum runs over the $k$-sets that meet $P$ and avoid $N$. A vertex $v\in P$ lies in $\binom{n-q-1}{k-1}$ of them, since the other $k-1$ vertices are chosen from the $n-q-1$ vertices outside $N\cup\{v\}$. Counting the pairs $(v,S)$ with $v\in S\cap P$ therefore gives $p\binom{n-q-1}{k-1}$ pairs, and each $S$ appears in at most $k$ of them. Hence there are at least $(p/k)\binom{n-q-1}{k-1}$ such $k$-sets, each tested at least $r$ times. The same argument applies to the second sum with the roles of $P$ and $N$ exchanged, and so
\begin{equation}
W(P,N)\geq\frac rk\left[
p\binom{n-q-1}{k-1}+q\binom{n-p-1}{k-1}\right].\label{equ:count-incidences}
\end{equation}
It remains to bound the right-hand side of~\eqref{equ:count-incidences} from below in terms of $s$ alone. Put $F(j)=\binom j{k-1}$, with $F(j)=0$ for $j<k-1$. Then $F$ has nondecreasing increments: $F(j+1)-F(j)=\binom j{k-2}$, which is nondecreasing in $j$. The bound~\eqref{equ:count-incidences} is symmetric in $(P,N)$, so we may assume $p\geq q$. Suppose first that $q>0$, and put $j_1=n-q-1$ and $j_2=n-p-1$, so that $j_1\geq j_2$ and the bracket in~\eqref{equ:count-incidences} equals $pF(j_1)+qF(j_2)$. We show
\begin{equation}
pF(j_1)+qF(j_2)
\geq\frac s2\bigl(F(j_1)+F(j_2)\bigr)
\geq sF(j_0)
\geq sB_{n,k},
\qquad j_0=\left\lfloor\frac{j_1+j_2}{2}\right\rfloor.\label{equ:three-steps}
\end{equation}
The first inequality holds because the difference of the two sides equals $\frac{p-q}2(F(j_1)-F(j_2))$, which is nonnegative since $p\geq q$ and $F$ is nondecreasing. For the second, $j_1-j_0\geq j_0-j_2$, so $F(j_1)-F(j_0)$ is a sum of at least as many increments as $F(j_0)-F(j_2)$, and each of these increments is at least as large as each increment in $F(j_0)-F(j_2)$, because the increments are nondecreasing. Hence $F(j_1)-F(j_0)\geq F(j_0)-F(j_2)$, that is, $F(j_1)+F(j_2)\geq2F(j_0)$. For the third, $j_0=\lfloor n-1-s/2\rfloor\geq\lfloor n/2\rfloor-1$, since $s\leq n$, and $F$ is nondecreasing. If $q=0$, then~\eqref{equ:count-incidences} gives $W(P,N)\geq(rp/k)F(n-1)$ directly, and $F(n-1)\geq B_{n,k}$. In both cases,~\eqref{equ:W-lower} follows.

\paragraph{Step 2: the number of patterns with support of size $s$}
A pattern $(P,N)$ with $|P|+|N|=s$ is determined by its support $P\cup N$, which can be chosen in $\binom ns$ ways, and by the assignment of each of its $s$ vertices to $P$ or to $N$, which can be made in $2^s$ ways. Since $\binom ns\leq n^s$, there are at most $(2n)^s$ such patterns.

\paragraph{Step 3: bounding the sum}
By~\eqref{equ:W-lower} and $1-\epsilon\leq e^{-\epsilon}$, each pattern with support of size $s$ contributes at most
\[
(1-\epsilon)^{W(P,N)}\leq\exp\left(-\epsilon\frac{rs}kB_{n,k}\right).
\]
Grouping the patterns by $s$ and using Step~2,
\[
\sum_{(P,N)}(1-\epsilon)^{W(P,N)}
\leq\sum_{s=1}^n(2n)^s\exp\left(-\epsilon\frac{rs}kB_{n,k}\right)
=\sum_{s=1}^ne^{-s\theta},
\qquad
\theta=\frac{\epsilon rB_{n,k}}k-\log(2n).
\]
Condition~\eqref{equ:hyper-sharp} states that $\epsilon rB_{n,k}/k\geq(1+a)\log n+3$. Hence
\[
\theta\geq(1+a)\log n+3-\log2-\log n=a\log n+3-\log2\geq a\log n+2,
\]
since $\log2<1$. In particular $\theta>0$, so the terms $e^{-s\theta}$ form a geometric series with ratio $e^{-\theta}<1$, and
\[
\sum_{s=1}^ne^{-s\theta}\leq\sum_{s=1}^\infty e^{-s\theta}=\frac{e^{-\theta}}{1-e^{-\theta}}.
\]
Finally, $e^{-\theta}\leq e^{-2}n^{-a}$ and $1-e^{-\theta}\geq1-e^{-2}$, so
\[
\P[\text{no unique finite MLE}]\leq\frac{e^{-2}n^{-a}}{1-e^{-2}}=\frac{n^{-a}}{e^2-1}<n^{-a}.
\]

\subsection{Proof of Corollary~\ref{cor:graph-sharp}}
The proof follows the three steps of the proof of Theorem~\ref{thm:hyper-sharp}, with two changes. For graphs, $W(P,N)$ can be counted exactly, which gives a larger lower bound, and the patterns are counted more precisely. A fourth step checks that the condition of the corollary suffices.

\paragraph{Step 1: a lower bound on $W(P,N)$}
For a complete graph design, the candidate edges are all pairs of vertices, and each is tested at least $r$ times. Let $(P,N)$ be a pattern with $p=|P|$, $q=|N|$, $s=p+q$, and $Z_0=V\setminus(P\cup N)$, so that $|Z_0|=n-s$. The first family in~\eqref{equ:W} consists of the pairs inside $P\cup Z_0$ that meet $P$: the $\binom p2$ pairs inside $P$ and the $p(n-s)$ pairs joining $P$ and $Z_0$. Similarly, the second family consists of $\binom q2$ pairs inside $N$ and $q(n-s)$ pairs joining $N$ and $Z_0$. Hence
\[
W(P,N)\geq r\left[\binom p2+\binom q2+s(n-s)\right].
\]
Since $p^2+q^2\geq s^2/2$, we have $\binom p2+\binom q2=(p^2+q^2-s)/2\geq s^2/4-s/2$, and therefore
\begin{equation}
W(P,N)\geq r\,\omega(s),\qquad
\omega(s)=\frac{s^2}4-\frac s2+s(n-s)=s\left(n-\frac{3s}{4}-\frac12\right).\label{equ:W-graph}
\end{equation}

\paragraph{Step 2: the number of patterns with support of size $s$}
As in the proof of Theorem~\ref{thm:hyper-sharp}, there are at most $\binom ns2^s$ such patterns. Using $\binom ns\leq(en/s)^s$, their number is at most $(2en/s)^s$.

\paragraph{Step 3: a sufficient condition for each support size}
By Proposition~\ref{pro:master}, \eqref{equ:W-graph}, Step~2, and $1-\epsilon\leq e^{-\epsilon}$,
\[
\P[\text{no unique finite MLE}]
\leq\sum_{s=1}^n\left(\frac{2en}s\right)^se^{-\epsilon r\omega(s)}.
\]
Suppose that, for every $s=1,\ldots,n$,
\begin{equation}
\epsilon r\omega(s)\geq s\log\frac{2en}s+a\log n+s.\label{equ:graph-per-s}
\end{equation}
Then the $s$th term is at most $n^{-a}e^{-s}$, and the probability is at most $n^{-a}\sum_{s\geq1}e^{-s}=n^{-a}/(e-1)<n^{-a}$. It remains to show that the condition of the corollary implies~\eqref{equ:graph-per-s}.

\paragraph{Step 4: the condition of the corollary implies~\eqref{equ:graph-per-s}}
The condition of the corollary states that $\epsilon r\geq2((1+a)\log n+3)/(n-1)$. Hence
\[
\frac{\epsilon r\omega(s)}s\geq2\bigl((1+a)\log n+3\bigr)\frac{n-3s/4-1/2}{n-1},
\]
and, after division by $s$,~\eqref{equ:graph-per-s} holds whenever $g_a(s)\geq0$, where
\[
g_a(s)=2\bigl((1+a)\log n+3\bigr)\frac{n-3s/4-1/2}{n-1}-\log\frac{2en}s-1-\frac{a\log n}{s}.
\]
We show that $g_a(s)\geq0$ for all real $s\in[1,n]$. As a function of $s$, the first term is linear, $-\log(2en/s)=\log s-\log(2en)$ is concave, and $-a\log n/s$ is concave; indeed,
\[
g_a''(s)=-\frac1{s^2}-\frac{2a\log n}{s^3}<0.
\]
A concave function on an interval attains its minimum at an endpoint, so it suffices to check $s=1$ and $s=n$.

At $s=1$, the fraction equals $(n-5/4)/(n-1)\geq3/4$, and $\log(2en)=\log n+\log2+1$, so
\[
g_a(1)\geq\frac32\bigl((1+a)\log n+3\bigr)-\log n-\log2-2-a\log n
=\frac{1+a}2\log n+\frac52-\log2>0.
\]
At $s=n$, the fraction equals $(n-2)/(4(n-1))$, and $\log(2en/n)=\log2+1$, so
\[
g_a(n)=\frac{n-2}{2(n-1)}\bigl((1+a)\log n+3\bigr)-\log2-2-\frac{a\log n}n.
\]
For $n\geq40$, we have $(n-2)/(2(n-1))\geq19/39$ and $\log n\geq\log32=5\log2$. The part of $g_a(n)$ that does not involve $a$ is therefore at least
\[
\frac{19}{39}(5\log2+3)-\log2-2=\frac{56}{39}\log2-\frac7{13}\geq\frac{28}{39}-\frac{21}{39}=\frac7{39}>0,
\]
using $\log2\geq1/2$. The coefficient of $a$ is $\bigl(\frac{n-2}{2(n-1)}-\frac1n\bigr)\log n>0$. Hence $g_a(n)>0$, so $g_a\geq0$ on $[1,n]$, which proves~\eqref{equ:graph-per-s} and the corollary. For balanced designs, $m=r\binom n2$, and the condition on $r$ becomes $m\geq\epsilon^{-1}n((1+a)\log n+3)$.

\section{Estimation bounds}\label{app:mse1}
\subsection{Proof of Theorem~\ref{lem:mse1}}
The proof compares the estimate with the true parameter through the curvature of the likelihood. On $\Theta_b$, the negative log-likelihood is strongly convex in the norm induced by $\bm M$, with modulus $q_{kb}$, so the estimation error is controlled by the size of the score vector $\bm g$ at the true parameter. The expected squared size of $\bm g$, measured in the norm induced by $\bm M^{-1}$, is at most $n/4$, which gives~\eqref{equ:expected-error}, and a bounded differences inequality gives~\eqref{equ:l2bound}.

Throughout, put $f=-\ell$, $p_j=p_{S_j}(\bm\beta)$, and $\norm{\bm v}_{\bm M}^2=\bm v^\top\bm M\bm v$. By~\eqref{equ:hessian}, the Hessian of $f$ at any $\bm\beta'$ is $\nabla^2f(\bm\beta')=\sum_j\sigma'(\bm x_j^\top\bm\beta')\bm x_j\bm x_j^\top$.

\paragraph{Step 1: the estimate exists and is unique}
The function $f$ is continuous, and $\Theta_b$ is compact, so $f$ attains its minimum on $\Theta_b$. Since $\sigma'>0$ and $\lambda_1(\bm M)>0$, the Hessian $\nabla^2f(\bm\beta')$ is positive definite at every $\bm\beta'$, so $f$ is strictly convex, and its minimizer over the convex set $\Theta_b$ is unique.

\paragraph{Step 2: a lower bound on the curvature on $\Theta_b$}
For $\bm\beta'\in\Theta_b$ and every experiment $j$, $|\bm x_j^\top\bm\beta'|\leq\norm{\bm x_j}_1\norm{\bm\beta'}_\infty\leq kb$. The function $\sigma'(z)=e^z/(1+e^z)^2$ is even and decreasing in $|z|$, so $\sigma'(\bm x_j^\top\bm\beta')\geq\sigma'(kb)=q_{kb}$. Hence
\[
\nabla^2f(\bm\beta')\succeq q_{kb}\sum_j\bm x_j\bm x_j^\top=q_{kb}\bm M
\qquad\hbox{for every }\bm\beta'\in\Theta_b.
\]

\paragraph{Step 3: strong convexity along the segment from $\bm\beta$ to $\hat{\bm\beta}_b$}
Put $\bm\Delta=\hat{\bm\beta}_b-\bm\beta$. Both the true parameter $\bm\beta$ and the estimate $\hat{\bm\beta}_b$ lie in $\Theta_b$, and $\Theta_b$ is convex, so every point $\bm\beta+t\bm\Delta$ with $t\in[0,1]$, on the line segment between them, also lies in $\Theta_b$. By the fundamental theorem of calculus, applied to $t\mapsto\nabla f(\bm\beta+t\bm\Delta)$,
\[
\bigl(\nabla f(\hat{\bm\beta}_b)-\nabla f(\bm\beta)\bigr)^\top\bm\Delta
=\int_0^1\bm\Delta^\top\nabla^2f(\bm\beta+t\bm\Delta)\bm\Delta\,dt
\geq q_{kb}\norm{\bm\Delta}_{\bm M}^2,
\]
where the inequality uses Step~2 at each point of the segment.

\paragraph{Step 4: the optimality condition and the score}
Since $\hat{\bm\beta}_b$ minimizes the convex function $f$ over the convex set $\Theta_b$, the first-order optimality condition states that $\nabla f(\hat{\bm\beta}_b)^\top(\bm\beta'-\hat{\bm\beta}_b)\geq0$ for every $\bm\beta'\in\Theta_b$. Taking $\bm\beta'=\bm\beta$ gives $\nabla f(\hat{\bm\beta}_b)^\top\bm\Delta\leq0$. At the true parameter, $\nabla f(\bm\beta)=-\bm g$, where
\[
\bm g=\nabla\ell(\bm\beta)=\sum_j(Y_j-p_j)\bm x_j
\]
is the score vector. Substituting both facts into Step~3,
\[
q_{kb}\norm{\bm\Delta}_{\bm M}^2
\leq\nabla f(\hat{\bm\beta}_b)^\top\bm\Delta+\bm g^\top\bm\Delta
\leq\bm g^\top\bm\Delta.
\]

\paragraph{Step 5: from the score to the error}
Writing $\bm g^\top\bm\Delta=(\bm M^{-1/2}\bm g)^\top(\bm M^{1/2}\bm\Delta)$, the Cauchy--Schwarz inequality gives $\bm g^\top\bm\Delta\leq\norm{\bm M^{-1/2}\bm g}\norm{\bm\Delta}_{\bm M}$. Combined with Step~4, this gives $\norm{\bm\Delta}_{\bm M}\leq\norm{\bm M^{-1/2}\bm g}/q_{kb}$. Finally, $\norm{\bm\Delta}_{\bm M}^2\geq\lambda_1(\bm M)\norm{\bm\Delta}^2$, so
\begin{equation}
\norm{\bm\Delta}^2\leq
\frac{\norm{\bm M^{-1/2}\bm g}^2}{q_{kb}^2\lambda_1(\bm M)}.\label{equ:score-comparison}
\end{equation}

\paragraph{Step 6: the expected error}
The outcomes are independent with $\E[Y_j]=p_j$ and $\operatorname{Var}(Y_j)=p_j(1-p_j)\leq1/4$. Hence $\E\bm g=\bm0$ and the covariance matrix of $\bm g$ is $\sum_jp_j(1-p_j)\bm x_j\bm x_j^\top\preceq\frac14\bm M$. Therefore
\[
\E\norm{\bm M^{-1/2}\bm g}^2=\E[\bm g^\top\bm M^{-1}\bm g]
=\tr\left(\bm M^{-1}\sum_jp_j(1-p_j)\bm x_j\bm x_j^\top\right)
\leq\frac14\tr(\bm M^{-1}\bm M)=\frac n4.
\]
Taking expectations in~\eqref{equ:score-comparison} and dividing by $n$ proves~\eqref{equ:expected-error}.

\paragraph{Step 7: the high-probability bound}
Let $F(\bm Y)=\norm{\bm M^{-1/2}\bm g}$, a function of the independent outcomes $Y_1,\ldots,Y_m$. Changing a single outcome $Y_j$ changes $\bm g$ by $\pm\bm x_j$, and hence, by the triangle inequality, changes $F$ by at most $\norm{\bm M^{-1/2}\bm x_j}=\sqrt{h_j}$, where $h_j=\bm x_j^\top\bm M^{-1}\bm x_j$. These bounds satisfy
\[
\sum_jh_j=\tr\Bigl(\bm M^{-1}\sum_j\bm x_j\bm x_j^\top\Bigr)=\tr(\bm M^{-1}\bm M)=n.
\]
By Jensen's inequality and Step~6, $\E F\leq(\E F^2)^{1/2}\leq\sqrt n/2$. The bounded differences inequality \citep{McDiarmid1989} states that $\P[F>\E F+t]\leq\exp(-2t^2/\sum_jh_j)$ for every $t>0$, so
\[
\P\left[F>\frac{\sqrt n}2+t\right]\leq e^{-2t^2/n}.
\]
With $t=\sqrt{n\log(1/\eta)/2}$, the right-hand side equals $\eta$. Hence, with probability at least $1-\eta$,
\[
F\leq\sqrt n\left(\frac12+\sqrt{\frac{\log(1/\eta)}2}\right),
\]
and by~\eqref{equ:score-comparison}, $\norm{\bm\Delta}\leq F/(q_{kb}\sqrt{\lambda_1(\bm M)})$. Dividing by $\sqrt n$ proves~\eqref{equ:l2bound}.

\paragraph{Step 8: the unconstrained MLE}
On the event $B_b$, the unconstrained MLE $\hat{\bm\beta}$ exists and lies in $\Theta_b$. It maximizes $\ell$ over all of $\reals^n$, so in particular over $\Theta_b$, and by the uniqueness in Step~1 it equals $\hat{\bm\beta}_b$. The bound of Step~7 therefore applies to $\hat{\bm\beta}$ on $B_b$, which proves the final statement of the theorem.

\subsection{Proof of Proposition~\ref{pro:minimax}}
The proof is a hypercube argument. It considers the $2^n$ parameters with coordinates $\pm\rho$ and shows that, for two of them that differ in a single coordinate, the distributions of the outcomes are too close to be told apart reliably. Hence every estimator gets the sign of each coordinate wrong with probability at least $1/4$ on average, and each wrong sign costs a squared error of at least $\rho^2$.

\paragraph{Step 1: the hypercube}
Put $\rho^2=\min\{b^2,1/d_{\max}\}$, with $\rho=b$ if $d_{\max}=0$, and for $\bm\tau\in\{-1,1\}^n$ put $\bm\beta_{\bm\tau}=\rho\bm\tau$. Since $\rho\leq b$, all $2^n$ points $\bm\beta_{\bm\tau}$ lie in $\Theta_b$. Write $P_{\bm\tau}$ and $\E_{\bm\tau}$ for the distribution of the outcomes and the expectation under $\bm\beta_{\bm\tau}$. The supremum over $\Theta_b$ is at least the average over the hypercube, so for every estimator $\tilde{\bm\beta}$,
\begin{equation}
\sup_{\bm\beta\in\Theta_b}\E_{\bm\beta}\norm{\tilde{\bm\beta}-\bm\beta}^2
\geq\frac1{2^n}\sum_{\bm\tau}\E_{\bm\tau}\norm{\tilde{\bm\beta}-\rho\bm\tau}^2
=\sum_{v=1}^n\frac1{2^n}\sum_{\bm\tau}\E_{\bm\tau}(\tilde\beta_v-\rho\tau_v)^2.\label{equ:hypercube-average}
\end{equation}

\paragraph{Step 2: from squared errors to sign errors}
Fix a vertex $v$, and let $\hat\tau_v=1$ if $\tilde\beta_v\geq0$ and $\hat\tau_v=-1$ otherwise. If $\hat\tau_v\ne\tau_v$, then $\tilde\beta_v$ and $\rho\tau_v$ lie on opposite sides of zero, or $\tilde\beta_v=0$, so $|\tilde\beta_v-\rho\tau_v|\geq\rho$. Hence
\[
\E_{\bm\tau}(\tilde\beta_v-\rho\tau_v)^2\geq\rho^2P_{\bm\tau}[\hat\tau_v\ne\tau_v].
\]

\paragraph{Step 3: pairing neighboring corners}
For $\bm\tau\in\{-1,1\}^n$, let $\bm\tau^{(v)}$ be the corner obtained by flipping the sign of coordinate $v$. The map $\bm\tau\mapsto\bm\tau^{(v)}$ pairs the corners, so
\[
\frac1{2^n}\sum_{\bm\tau}P_{\bm\tau}[\hat\tau_v\ne\tau_v]
=\frac1{2^n}\sum_{\bm\tau}\frac12\Bigl(P_{\bm\tau}[\hat\tau_v\ne\tau_v]+P_{\bm\tau^{(v)}}[\hat\tau_v\ne\tau^{(v)}_v]\Bigr).
\]
Since $\tau^{(v)}_v=-\tau_v$ and $\hat\tau_v\in\{-1,1\}$, the event $\{\hat\tau_v\ne\tau^{(v)}_v\}$ is the complement of the event $E=\{\hat\tau_v\ne\tau_v\}$. For any two distributions $P$ and $Q$ and any event $E$, $Q[E^c]=1-Q[E]\geq1-P[E]-\operatorname{TV}(P,Q)$, where $\operatorname{TV}$ is the total variation distance. Hence each bracket is at least $1-\operatorname{TV}(P_{\bm\tau},P_{\bm\tau^{(v)}})$.

\paragraph{Step 4: neighboring corners are close}
Under $\bm\beta$, the outcomes are independent Bernoulli variables with natural parameters $z_j=\bm x_j^\top\bm\beta$ and log-partition function $A(z)=\log(1+e^z)$. For exponential families, the Kullback--Leibler divergence is the Bregman divergence of $A$, so for parameters $\bm\beta,\bm\beta'$ with natural parameters $z_j,z_j'$,
\[
\KL(P_{\bm\beta}\Vert P_{\bm\beta'})=\sum_j\bigl[A(z_j')-A(z_j)-A'(z_j)(z_j'-z_j)\bigr]
\leq\frac12\sup_zA''(z)\sum_j(z_j'-z_j)^2
\leq\frac18\norm{\bm X(\bm\beta-\bm\beta')}^2,
\]
by Taylor's theorem and $A''=\sigma'\leq1/4$. For the neighboring corners $\bm\tau$ and $\bm\tau^{(v)}$, $\bm\beta_{\bm\tau}-\bm\beta_{\bm\tau^{(v)}}=\pm2\rho\bm e_v$, and $\norm{\bm X\bm e_v}^2=M_{v,v}$, so
\[
\KL(P_{\bm\tau}\Vert P_{\bm\tau^{(v)}})\leq\frac18\cdot4\rho^2M_{v,v}=\frac{\rho^2M_{v,v}}2\leq\frac{\rho^2d_{\max}}2\leq\frac12.
\]
Pinsker's inequality, $\operatorname{TV}(P,Q)\leq\sqrt{\KL(P\Vert Q)/2}$ \citep{Tsybakov2009}, then gives $\operatorname{TV}(P_{\bm\tau},P_{\bm\tau^{(v)}})\leq1/2$. If $d_{\max}=0$, the outcomes do not depend on $\bm\beta$, and the total variation distance is zero.

\paragraph{Step 5: conclusion}
By Steps~3 and~4, each bracket in Step~3 is at least $1/2$, so the average sign-error probability for coordinate $v$ is at least $1/4$. By Step~2, the average squared error in coordinate $v$ is at least $\rho^2/4$. Summing over the $n$ coordinates in~\eqref{equ:hypercube-average} gives
\[
\sup_{\bm\beta\in\Theta_b}\frac1n\E_{\bm\beta}\norm{\tilde{\bm\beta}-\bm\beta}^2\geq\frac{\rho^2}4=\frac14\min\{b^2,1/d_{\max}\}.
\]
This holds for every estimator $\tilde{\bm\beta}$, which proves~\eqref{equ:minimax}.

\section{Rank of random designs}
\subsection{A quantitative square-matrix bound}\label{app:quantitative}
\input{quantitative-rank}

\subsection{Proof of Theorem~\ref{thm:rank-growing}}\label{app:rank-growing}
\input{rank-growing}

\subsection{Proof of Theorem~\ref{thm:rank-fixed}}\label{app:rank-fixed}
\input{rank-fixed}

\section{Random-design existence and eigenvalues}\label{app:suffrnd}
\subsection{Proof of Theorem~\ref{thm:suffrnd}}
Fix disjoint $P,N$ with total support $s$, and let $\pi(P,N)$ be the probability that one uniformly sampled $k$-set belongs to one of the two families in~\eqref{equ:W}. Applied to one experiment on every $k$-set, that is, with $r=1$, the bound~\eqref{equ:W-lower} shows that at least $sB_{n,k}/k$ of the $\binom nk$ $k$-sets belong to these families, so
\[
\pi(P,N)\geq\frac{sB_{n,k}}{k\binom nk}.
\]
The number $W(P,N)$ of sampled experiments in these families has distribution $\operatorname{Bin}(m,\pi(P,N))$. Consequently
\[
\E[(1-\epsilon)^{W(P,N)}]
=(1-\epsilon\pi(P,N))^m
\leq e^{-m\epsilon\pi(P,N)}.
\]
Summing over at most $(2n)^s$ pairs gives the same geometric series as in the proof of Theorem~\ref{thm:hyper-sharp}, with $r$ replaced by $m/\binom nk$. This yields~\eqref{equ:suffrnd}. For graphs, the exact bound is $\pi(P,N)\geq\omega(s)/\binom n2$. Set $\epsilon r$ in the graph proof equal to $\epsilon m/\binom n2$. Its sufficient condition becomes~\eqref{equ:suffrnd-graph}.

\subsection{Proof of Proposition~\ref{lem:lambdaconc}}
The matrix $\bm M$ is the sum of independent positive semidefinite matrices $\bm x_j\bm x_j^\top$, each of spectral norm $k$. The matrix Chernoff inequality \citep[Theorem 5.1.1]{T15} gives
\[
\P[\lambda_1(\bm M)\leq(1-\delta)\lambda_1(\E\bm M)]
\leq n\exp\left(-\frac{\delta^2\lambda_1(\E\bm M)}{2k}\right).
\]
Substitution of~\eqref{equ:lam1em} proves the result.

%% file: quantitative-rank.tex
The proof of Theorem~\ref{thm:rank-growing} in Appendix~\ref{app:rank-growing} reduces the rank of the random design matrix to the singularity of square matrices with independent uniformly random rows of a fixed weight. For this model, \cite{FKS22} proved that the singularity probability tends to zero when the weight is at least $(1+\varepsilon)\log n$ and at most $n/2$. Our application needs a statement of this result with two specific properties. First, we need an explicit bound on the singularity probability, which determines the rate in Theorem~\ref{thm:rank-growing}. Second, we need this bound to hold uniformly in the weight, because the weight in our construction depends on $n$ and $k$ and varies between $(1+\varepsilon)\log n$ and $2(1+\varepsilon)\log n$. Lemma~\ref{lm:quantitative-rank} is a quantitative version of the result of \cite{FKS22} with these two properties. Its proof follows their argument, uses two of their lemmas as stated, and keeps track of the constants in the remaining steps.

\begin{lm}[Quantitative square-matrix bound]\label{lm:quantitative-rank}
Let $n$ and $d$ be integers with $1\leq d\leq n$, and let $\bm Q$ be the random $n\times n$ matrix whose rows $\bm R_1,\ldots,\bm R_n$ are independent and each uniformly distributed on
\[
\{\bm x\in\{0,1\}^n:\norm{\bm x}_1=d\}.
\]
Equivalently, each row is the incidence vector of a $d$-subset of $\{1,\ldots,n\}$ chosen uniformly at random, independently of the other rows; every entry of $\bm Q$ is $0$ or $1$, and every row has exactly $d$ entries equal to $1$. We call $d$ the weight of the rows. Fix $\varepsilon\in(0,1)$. There are constants $C_\varepsilon>0$ and $\gamma_\varepsilon>0$ and an integer $n_\varepsilon$, depending only on $\varepsilon$, such that, for all $n\geq n_\varepsilon$ and all integers $d$ with $(1+\varepsilon)\log n\leq d\leq n/2$,
\[
\P[\bm Q\text{ is singular}]
\leq C_\varepsilon\left(n^{-\gamma_\varepsilon}\log d+\frac{(\log d)^{3/2}}{\sqrt d}\right),
\]
where $\bm Q$ is singular if $\det\bm Q=0$.
\end{lm}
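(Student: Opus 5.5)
The plan is to follow the architecture of \cite{FKS22}, which runs in the style of Costello--Tao--Vu and Litvak--Tikhomirov, and to track constants. We expose the rows one at a time and let $\bm Q_t$ be the $t\times n$ matrix of the first $t$ rows. Singularity of $\bm Q$ means some row $\bm R_{t+1}$ falls in the span of the earlier rows while $\rank\bm Q_t=t$. Equivalently, we use the column formulation: $\bm Q$ is singular if and only if there is a nonzero $\bm v$ with $\bm Q\bm v=\bm0$, or with $\bm v^\top\bm Q=\bm0$. We split the possible null vectors $\bm v$ into three classes.

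\textbf{Sparse} null vectors have small support. They are handled by an expansion argument like the one used for Theorem~\ref{thm:rank-fixed}. A set $S$ of columns with $|S|\leq\delta n$ is killed unless no row meets $S$ in exactly one coordinate. The probability that a given row meets $S$ exactly once is about $(d|S|/n)e^{-d|S|/n}$. For $d\geq(1+\varepsilon)\log n$, a union bound over supports gives $O(n^{-\gamma_\varepsilon})$. The single-column case is the null-column probability $n(1-d/n)^n\leq n^{-\varepsilon}$, and this is the source of $n^{-\gamma}$.

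\textbf{Non-sparse, structured} null vectors are those with few distinct values or a large level set. For these we use the first of the two FKS lemmas quoted as stated: a vector that is not close to constant on most coordinates has small anti-concentration probability $\P[\bm R\cdot\bm v=0]$. A count over a net of such vectors, with $p^n$ or lattice-point counting, gives an exponentially small contribution $\exp(-cn)$ uniformly in $d\leq n/2$.

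\textbf{Generic} null vectors remain, and this is the main obstacle that produces the second term. Here we would use the FKS argument based on a Littlewood--Offord inequality for combinatorial (hypergeometric rather than independent) random sums. The relevant statement is that $\sup_a\P[\bm R\cdot\bm v=a]=O(\sqrt{\log d/d})$ when $\bm v$ has many distinct-valued coordinates. This is their second lemma, which we take as stated. We combine it with the row-exposure scheme: for $t$ between $n-\log d$ and $n$ the codimension is small, and we condition on a normal vector of $\operatorname{span}(\bm R_1,\ldots,\bm R_t)$ being generic. Summing the per-step failure probabilities $O(\sqrt{\log d/d})$ over the roughly $\log d$ final steps gives $(\log d)^{3/2}/\sqrt d$. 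The earlier steps, where rank deficiency would have to be large, contribute through a Odlyzko-type bound at most $n^{-\gamma}$ per dyadic scale. With $O(\log d)$ scales this gives the $n^{-\gamma}\log d$ term.

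The delicate point throughout is uniformity in $d$. Every constant in the three classes must depend only on $\varepsilon$, using only $(1+\varepsilon)\log n\leq d\leq n/2$. We would verify this by rewriting each of the FKS estimates with $d$ as a free parameter and checking that the thresholds $n_\varepsilon$ do not depend on $d$.
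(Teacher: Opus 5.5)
Your outline gets the right shape of the bound, but the mechanism that produces it is missing, and two steps fail as stated.

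\textbf{The generic step.} The row-exposure sum cannot be controlled as you propose. For weight-$d$ rows there is no Odlyzko-type bound at moderate codimension. Odlyzko's lemma only gives $\P[\bm R\in W]\leq2^{\dim W}/\binom nd$, which is trivial unless $\dim W<\log_2\binom nd$. Moreover, the coordinate subspace $\{x_1=\cdots=x_c=0\}$ contains a random weight-$d$ row with probability about $e^{-cd/n}$, which is close to $1$ for $c\ll n/d$. So no bound uniform over subspaces of a given codimension can make the steps before the last $\log d$ cost $n^{-\gamma}$ per dyadic scale; note also that there are order $\log n$ such scales, not $\log d$. Meanwhile, a single generic normal vector gives only $O(\sqrt{\log d/d})$ per step, which cannot be summed over $n$ steps.

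What you need is Lemma~2.1 of \cite{FKS22}. It is an exchangeability reduction, not an anti-concentration estimate. If no nonzero left-kernel vector has support smaller than $t$, then a singular $\bm Q$ has at least $t$ rows each lying in the span of the others. Averaging over the $n$ i.i.d.\ rows gives
\[
\P[\bm Q\text{ singular}]\leq A_L+\frac nt\,A_R+\frac nt\sup_{\bm v}\P[\bm R\cdot\bm v=0],
\]
where:
\begin{itemize}
\item $A_L$ is the probability of a nonzero left-kernel vector of support smaller than $t$;
\item $A_R$ is the probability that $\bm R_1,\ldots,\bm R_{n-1}$ have a common nonzero normal vector with a fiber larger than $n-t$;
\item the supremum is over vectors all of whose fibers have size at most $n-t$.
\end{itemize}
Lemma~4.2 of \cite{FKS22} bounds the supremum by $C\sqrt{n/(td)}$. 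With $t=an/\log d$, the factor $n/t=\log d/a$ produces both the $\log d$ in $n^{-\gamma}\log d$ (through $(n/t)A_R$) and the term $(\log d)^{3/2}/\sqrt d$. This route requires a bound on small-support \emph{left}-kernel vectors, which your outline does not contain.

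\textbf{The sparse and structured classes.} The ``exactly one coordinate'' argument works in Theorem~\ref{thm:rank-fixed} only because $k|S|/n\leq k\delta$ is small there. Here $d$ grows, and $a_s\approx(d|S|/n)e^{-d|S|/n}$ collapses once $d|S|/n$ exceeds about $\log d$. Then $(1-a_s)^n$ no longer beats $\binom ns$. For instance, with $d=n/2$ and $|S|=2\log_2 n$ one has $a_s=O(\log n/n^2)$, so the bound is near $1$ while $\binom ns$ is huge.

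What is needed is a per-row bound that stays below a fixed constant $<1$ even when $sp$ is large, paid for by a union bound over labelled vectors in a finite ring. The paper obtains this in two ways:
\begin{itemize}
\item For left-kernel vectors it reduces modulo $2$, using the even-column-sum probability $B_{s,p}$ and a cost $e^{C_0s\log d}$ for conditioning on the row weight.
\item For right-kernel vectors with a large fiber it reduces modulo $q=d-1$, chosen so that $d\equiv1$ and constant vectors stay excluded. A random-pairing realisation of $\bm R$ gives residue bounds uniform in $d$ and $q$.
\end{itemize}
These costs, $e^{O(s\log d)}$, are what restrict the union bounds to $s<an/\log d$ rather than $\delta n$. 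This is exactly why $t$, and hence the logarithmic factors, have the form they do.

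Finally, your structured class invokes anti-concentration for vectors that are \emph{not} close to constant, yet structured vectors are precisely the close-to-constant ones. The claimed $\exp(-cn)$ also comes with no count. In the paper this class (vectors with a fiber larger than $n-t$, which also includes sparse vectors) is handled by the mod-$(d-1)$ union bound above and contributes $C_\varepsilon n^{-\varepsilon/2}+e^{-c_\varepsilon n}$.
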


\begin{proof}
A singular matrix $\bm Q$ has a nonzero vector in its left or its right kernel. The proof excludes two kinds of kernel vectors by union bounds: left-kernel vectors of small support, and right-kernel vectors with one large set of equal coordinates. For such vectors, after reduction modulo a suitable integer, a random row is unlikely to be orthogonal to the vector. For all remaining vectors, an anti-concentration bound of \citet{FKS22} applies, and it gives the dominant term $(\log d)^{3/2}/\sqrt d$. The work needed beyond their argument is to keep all constants uniform in $d$.

We follow the argument of \citet{FKS22}, keeping the bounds uniform in $d$,
and use their Lemmas~2.1 and~4.2. Let $\bm R_1,\ldots,\bm R_n$ be the rows of $\bm Q$.
Write $p=d/n$, choose
$\delta=\varepsilon/[64(1+\varepsilon)]$, and later choose a sufficiently
small constant $a=a_\varepsilon>0$. Put $t=an/\log d$.

We first record uniform versions of the two scalar estimates used in their proof.
For a Bernoulli sum of $s$ terms, its probability of being even is
\[
 B_{s,p}=\frac{1+(1-2p)^s}{2}
 \leq\frac{1+e^{-2sp}}2
 =e^{-sp}\cosh(sp).
\]
Writing $u=sp$, this gives $B_{s,p}\leq\exp\{-(1-\delta/2)u\}$ when
$u\leq\delta$, and $B_{s,p}\leq\exp(-\kappa_1)$ when $u\geq\delta$,
where $\kappa_1=-\log\{(1+e^{-2\delta})/2\}>0$.

For a nonconstant vector $\bm v\in\mathbb Z_q^n$ with largest fiber of size $n-s$,
let $\bm R$ be a uniform binary row of weight $d$. Realize $\bm R$ by
selecting $d$ disjoint random pairs of coordinates and independently choosing
one coordinate from each pair. If any pair has unequal labels, then,
conditionally on all pairs and all remaining choices, any given residue of
$\bm v\cdot\bm R$ has probability at most $1/2$.
For $u=sp\leq\delta$, necessarily $p\leq\delta$. The event that exactly
one of the $2d$ paired coordinates lies outside the largest fiber has probability
\[
  \frac{2ds}{n}\prod_{j=0}^{2d-2}\frac{n-s-j}{n-1-j}
 \geq 2u\left\{1-\frac{(2d-1)(s-1)}{n-2d+1}\right\}
 \geq2u(1-4\delta).
\]
On this event some pair has unequal labels. Therefore, for every residue $b$,
\[
 \P[\bm v\cdot\bm R\equiv b\pmod q]
 \leq\exp\{-(1-4\delta)sp\}\qquad(sp\leq\delta).
\]
For $sp\geq\delta$, the same residue probability is at most
$\exp(-\kappa_2)$ for a constant $\kappa_2=\kappa_2(\delta)>0$, uniformly
in $p,n,s,q$. Here is an explicit justification of this uniformity.
If $s\geq n/3$, choose a union of fibers with size between $n/3$ and $2n/3$;
the first pair crosses this union with probability bounded below by $2/9$.
If $s<n/3$, let $W$ count pairs crossing the largest fiber. Then
\begin{align*}
\mathbb EW
&=\frac{2ds(n-s)}{n(n-1)},\\
\mathbb E[W(W-1)]
&=\frac{4d(d-1)s(s-1)(n-s)(n-s-1)}
        {n(n-1)(n-2)(n-3)}.
\end{align*}
For sufficiently large $n$, these give $\mathbb EW\geq4sp/3$ and
$\mathbb EW^2\leq3sp+16(sp)^2$. Thus
\[
 \P[W>0]\geq\frac{(\mathbb EW)^2}{\mathbb EW^2}
 \geq\frac{16\delta}{9(3+16\delta)}>0.
\]
These bounds give the asserted $\kappa_2$ and are independent of the modulus.

We next bound small left-kernel vectors. A nonzero rational left-kernel vector
of support less than $t$ can be scaled to a primitive integer vector and reduced
modulo $2$, producing a nonzero binary left-kernel vector of support less than $t$.
For a prescribed support of size $s$, compare the corresponding $s$ rows with
independent Bernoulli($p$) rows, conditioned on each having weight $d$.
Uniform Stirling bounds give $\P[\operatorname{Bin}(n,p)=d]\geq c_0/\sqrt d$
for an absolute constant $c_0>0$, hence the probability that all column sums
of those $s$ rows are even is at most
\[
 \exp(C_0s\log d) B_{s,p}^{n}
\]
for an absolute $C_0$. Since $d\geq(1+\varepsilon)\log n$,
$\sup_d(\log d)/d\to0$. In the range $sp\leq\delta$, this conditioning
cost is uniformly absorbed into the exponent; for sufficiently large $n$,
\[
 \binom ns\exp(C_0s\log d)B_{s,p}^{n}\leq n^{-\varepsilon s/2}.
\]
For $sp>\delta$ and $s<t$, choosing $a$ so that $C_0a<\kappa_1/4$
gives an exponential bound. The number of supports of size at most $t$ is
$\exp\{o(n)\}$ uniformly in $d$, since $t/n=a/\log d\to0$ uniformly.
Consequently
\[
 \P[\exists\,\bm x\ne0:\ |\operatorname{supp}\bm x|<t,
                  \bm x^\top\bm Q=0]
 \leq C_\varepsilon n^{-\varepsilon/2}+e^{-c_\varepsilon n}.
\]

For right-kernel vectors with a fiber larger than $n-t$, use $q=d-1\geq2$,
as in the proof of Lemma~4.4 in \citet{FKS22}. If a rational kernel vector has this property,
scale it to a primitive integer vector and reduce modulo $q$. The result is
nonzero. It cannot be constant, because $d\equiv1\pmod q$ makes a constant
kernel vector zero. It retains a fiber of size at least that of the original
vector. Any real solution with prescribed equal coordinates can first be
replaced by a nonzero rational solution, because the relevant homogeneous
system has rational coefficients.
For largest fiber $n-s$, the number of residue vectors is at most
$\binom ns q^{s+1}$. Our scalar estimates, applied to the first $n-1$ rows,
give a union bound over $1\leq s<t$.
For $sp\leq\delta$, the cost $(s+1)\log q\leq2s\log d$ and the factor
$1-1/n$ in the number of rows are uniformly absorbed into the exponent,
yielding, for sufficiently large $n$,
\[
 \binom ns q^{s+1}
       e^{-(1-4\delta)sp(n-1)}\leq n^{-\varepsilon s/2}.
\]
For $sp>\delta$, the product probability is at most $e^{-\kappa_2(n-1)}$;
the labeling cost is at most $e^{an+\log d}$, and the support-count cost is
$e^{o(n)}$. Choose $a<\kappa_2/4$. Therefore
\[
 \P[\exists\,\bm x\ne0:\ \bm x\text{ has a fiber larger than }n-t,
                  \bm x\cdot\bm R_i=0\ (1\leq i\leq n-1)]
 \leq C_\varepsilon n^{-\varepsilon/2}+e^{-c_\varepsilon n}.
\]

Let $A_L$ and $A_R$ denote the exceptional probabilities in the preceding
left- and right-kernel estimates, respectively. Lemma~2.1 in \citet{FKS22} holds for any field, any random matrix with independent and identically distributed rows, any property of vectors, and any $t$; we apply it over the rational numbers with
the property that every fiber has size at most $n-t$. Their Lemma~4.2 states that, for $0\leq d\leq n/2$ and a vector whose largest fiber has size $n-s$, the probability that its scalar product with a uniform binary vector of weight $d$ equals any given value is $O(\sqrt{n/(sd)})$. The implied constant is absolute, since their proof uses only a Chernoff bound and the Erd\H{o}s--Littlewood--Offord theorem. Hence,
uniformly for vectors with that property, the scalar atom probability is
at most $C\sqrt{n/(td)}$ for an absolute constant $C$. Thus
\[
 \P[\bm Q\text{ is singular}]
 \leq A_L+\frac nt A_R+\frac nt\,C\sqrt{\frac{n}{td}}.
\]
Substituting $t=an/\log d$, the third term is
\[
 \frac nt\,C\sqrt{\frac{n}{td}}
 =C_\varepsilon\frac{(\log d)^{3/2}}{\sqrt d}.
\]
Combining these estimates proves the stated bound, with the weaker exponent
$\gamma_\varepsilon=\varepsilon/4$ absorbing all exponentially small terms.
\end{proof}

%% file: rank-growing.tex
The proof reduces the full column rank of the $m\times n$ design matrix $\bm X$ to the nonsingularity of an $n\times n$ matrix $\bm Q$ to which Lemma~\ref{lm:quantitative-rank} applies. If the group size $k$ is at least $(1+\varepsilon)\log n$, for a small fixed $\varepsilon>0$, then $\bm Q$ consists of the first $n$ rows of $\bm X$. Otherwise, each row of $\bm Q$ is the sum of a block of rows of $\bm X$ whose groups are pairwise disjoint. The converse uses the number of null columns.

\paragraph{Step 1: the reduction}
Suppose that $\bm Q=\bm B\bm X$ for an $n\times m$ matrix $\bm B$, that is, every row of $\bm Q$ is a linear combination of rows of $\bm X$. Since the rank of a product is at most the rank of each factor, $\rank(\bm Q)\leq\rank(\bm X)$. Hence, if $\bm Q$ is nonsingular, then $\rank(\bm X)\geq n$, and $\bm X$ has full column rank. It therefore suffices to construct such a matrix $\bm Q$ that is nonsingular with the stated probability.

Choose fixed constants $\varepsilon\in(0,1)$ and $\zeta>0$ so that $2(1+\varepsilon)(1+\zeta)<c$, and put $L=(1+\varepsilon)\log n$. We distinguish two cases.

\paragraph{Step 2: group size $k\geq L$}
Let $\bm Q$ consist of the first $n$ rows of $\bm X$, which is possible because $m\geq n$; this corresponds to $\bm B=(\bm I\ \bm0)$. The rows of $\bm Q$ are independent, and each is the incidence vector of a uniformly random $k$-subset of $V$. So $\bm Q$ is the matrix of Lemma~\ref{lm:quantitative-rank} with weight $d=k$. Since $k\geq L$, and $k\leq n/2$ for sufficiently large $n$ by the assumption $k=o(n/\log n)$, the lemma gives
\[
\P[\bm Q\text{ is singular}]\leq C_\varepsilon\left(n^{-\gamma_\varepsilon}\log k+\frac{(\log k)^{3/2}}{\sqrt k}\right).
\]
This case uses only $m\geq n$. For the final statement of the theorem, choose $\varepsilon\leq\xi$, so that $k\geq(1+\xi)\log n$ implies $k\geq L$.

\paragraph{Step 3: group size $k<L$, and the blocks}
The rows of $\bm X$ now have weight $k<L$, which is too small for Lemma~\ref{lm:quantitative-rank}. We build rows of larger weight by adding rows of $\bm X$ with disjoint groups. Set $\nu =\lceil L/k\rceil$ and $d=\nu k$, so that
\[
L\leq d<L+k<2L.
\]
Take $G=\lceil(1+\zeta)n\rceil$ disjoint blocks of $\nu $ consecutive rows of $\bm X$. There are enough rows for this: since $\nu <L/k+1<2L/k$, for sufficiently large $n$,
\[
G\nu\leq\bigl((1+\zeta)n+1\bigr)\frac{2L}{k}<c\,\frac nk\log n\leq m.
\]
Call a block successful when its $\nu $ groups are pairwise disjoint. The sum of the $\nu $ rows of a successful block is then a binary vector with exactly $d$ entries equal to $1$, namely the incidence vector of the union of the $\nu $ groups.

\paragraph{Step 4: most blocks are successful}
For two independent uniformly random $k$-sets, the expected size of their intersection is $k^2/n$, so the probability that they intersect is at most $k^2/n$. A union bound over the $\binom\nu2$ pairs of groups in a block gives
\[
\P[\text{a block is not successful}]\leq\binom\nu2\frac{k^2}{n}\leq\frac{d^2}{2n}=O\left(\frac{(\log n)^2}{n}\right).
\]
The expected number of unsuccessful blocks among the $G$ blocks is therefore $O((\log n)^2)$. At least $n$ blocks are successful unless more than $G-n\geq\zeta n$ blocks fail, so by Markov's inequality, the probability that fewer than $n$ blocks are successful is $O((\log n)^2/n)$.

\paragraph{Step 5: the matrix $\bm Q$}
On the event that at least $n$ blocks are successful, let $\bm Q$ be the $n\times n$ matrix whose rows are the sums of the first $n$ successful blocks. Each row of $\bm Q$ is a sum of rows of $\bm X$, so $\bm Q=\bm B\bm X$ for a $0$--$1$ matrix $\bm B$, and Step~1 applies. We show that, conditional on which blocks are successful, $\bm Q$ is the matrix of Lemma~\ref{lm:quantitative-rank} with weight $d$. The blocks consist of disjoint sets of rows of $\bm X$, and the rows of $\bm X$ are independent, so the blocks are independent, and conditioning on the success indicators keeps the successful blocks independent. For a single block, conditional on success, the union of its groups is a uniformly random $d$-subset of $V$: its distribution is invariant under every permutation of the vertices, and the permutations act transitively on the $d$-subsets. Hence the rows of $\bm Q$ are, conditionally, independent incidence vectors of uniformly random $d$-subsets. Since $d\geq L$ and $d<2L$, we also have $d\leq n/2$ for sufficiently large $n$, and Lemma~\ref{lm:quantitative-rank} gives
\[
\P[\bm Q\text{ is singular}\mid\text{success indicators}]\leq C_\varepsilon\left(n^{-\gamma_\varepsilon}\log d+\frac{(\log d)^{3/2}}{\sqrt d}\right)
\]
whenever at least $n$ blocks are successful.

\paragraph{Step 6: the rate}
In both cases, by Step~1,
\begin{align*}
\P[\rank_{\reals}(\bm X)<n]
&\leq\P[\text{fewer than $n$ blocks are successful}]\\
&\quad+\P[\bm Q\text{ is singular and at least $n$ blocks are successful}],
\end{align*}
where the first term is absent in Step~2. The function $(\log d)^{3/2}/\sqrt d$ decreases for $d>e^3$, and $d\geq L$ in both cases, so $(\log d)^{3/2}/\sqrt d\leq(\log L)^{3/2}/\sqrt L$; moreover, $\log d\leq\log n$. By Steps~2, 4 and~5, the probability that $\bm X$ is rank deficient is therefore at most
\[
C_c\left(\frac{(\log n)^2}{n}+n^{-\gamma_\varepsilon}\log n+\frac{(\log\log n)^{3/2}}{\sqrt{\log n}}\right),
\]
for a constant $C_c$ depending only on $c$, through $\varepsilon$ and $\zeta$. The last term dominates, proving the asserted rate.

\paragraph{Step 7: the converse}
A design matrix with fewer than $n$ rows cannot have rank $n$, so $m\geq n$ is necessary. For the second necessary condition, let $Z$ be the number of zero columns of $\bm X$, that is, of vertices that take part in no experiment; a zero column precludes full column rank. If $m\leq c'(n/k)\log n$ with $c'<1$, then
\[
\E Z=n(1-k/n)^m\geq n^{1-c'}(1+o(1))\longrightarrow\infty,
\]
where $k\log n/n\to0$ ensures that the quadratic remainder in $\log(1-k/n)$ is $o(1)$. For two distinct columns, the probability that both are zero is
\[
\left(\frac{(n-k)(n-k-1)}{n(n-1)}\right)^m\leq(1-k/n)^{2m},
\]
so the zero-column indicators have nonpositive pairwise covariances, and $\operatorname{Var}Z\leq\E Z$. By Chebyshev's inequality, $\P[Z=0]\leq\operatorname{Var}Z/(\E Z)^2\leq1/\E Z\to0$, so $\bm X$ fails to have full column rank with probability tending to one.

%% file: rank-fixed.tex
The theorem has two parts: an upper bound on the probability that $\bm X$ is rank deficient when $c>1$, and a matching lower bound, which also covers $c<1$.

For the upper bound, recall that $\bm X$ has full column rank if and only if the only vector $\bm\alpha$ with $\bm X\bm\alpha=\bm0$ is $\bm\alpha=\bm0$. Rank deficiency is therefore the event that $\bm X$ has a nonzero kernel vector, and we bound the probability of this event. Over the real numbers, a union bound over candidate kernel vectors is impossible, because there are infinitely many of them. We therefore work over a finite field $\mathbb F_p$. Rank deficiency over the real numbers implies rank deficiency over $\mathbb F_p$, because rank is detected by the $n\times n$ minors of $\bm X$, which are integers, and a minor that is zero over the real numbers is also zero modulo $p$; we give the details below. It therefore suffices to bound the probability that $\bm X$ has a nonzero kernel vector in $\mathbb F_p^n$, a set of at most $p^n$ candidates. We group the candidates by the size $s$ of their support. A candidate with small support is excluded by any row of $\bm X$ that meets its support in exactly one vertex. For $s=1$, the absence of such a row means that $\bm X$ has a null column, and this case gives the leading term $n^{-(c-1)}$. A candidate with large support is excluded by any row whose vertices all carry the same frequent nonzero label.

For the lower bound, a null column makes $\bm X$ rank deficient, and a second-moment argument for the number of null columns shows that this event has probability at least $(1+o(1))n^{-(c-1)}$ when $c>1$, and probability tending to one when $c<1$.

Choose a prime $p>k$ and view $\bm X$ over the field $\mathbb F_p$ of integers modulo $p$; $p$ is prime so that $\mathbb F_p$ is a field.
Over any field, a matrix with $n$ columns and at least $n$ rows has rank $n$ if and only if at least one of its $n\times n$ minors, the determinants of the submatrices formed by $n$ of its rows, is nonzero. A determinant is a polynomial in the entries of the matrix with integer coefficients. Since the entries of $\bm X$ are $0$ and $1$, each $n\times n$ minor of $\bm X$ is an integer $D$, and the same minor computed over $\mathbb F_p$ equals $D$ modulo $p$. If $\bm X$ is rank deficient over the real numbers, every such $D$ is zero, hence zero modulo $p$, and $\bm X$ is rank deficient over $\mathbb F_p$. Equivalently, full rank over $\mathbb F_p$ implies full rank over the real numbers. The converse may fail, since a nonzero minor can be divisible by $p$. Thus it suffices for the upper bound
to exclude nonzero vectors $\bm\alpha\in\mathbb F_p^n$ with $\bm X\bm\alpha=0$.

For a support $S\subseteq[n]$ with $|S|=s$, a row meeting $S$ in exactly one
coordinate has nonzero scalar product with every vector whose support is $S$.
The probability of such a row is
\[
 a_s=\frac{s\binom{n-s}{k-1}}{\binom nk}
     =\frac{ks}{n}\prod_{i=0}^{k-2}\frac{n-s-i}{n-1-i}.
\]
Consequently, the probability that any kernel vector has support $S$ is at most
$(1-a_s)^m\leq\exp(-ma_s)$. This bound is independent of the labels on $S$,
so the union bound involves $\binom ns$ supports, rather than all their labellings.

Put $h_n=\lfloor n/(\log n)^2\rfloor$. Uniformly for $1\leq s\leq h_n$,
\[
 a_s=\frac{ks}{n}\{1+O(s/n+1/n)\},\qquad
 ma_s=cs\log n+O(s/\log n).
\]
It follows that
\[
 \binom ns e^{-ma_s}
 \leq \left(\frac{e\,n^{-(c-1)}e^{O(1/\log n)}}s\right)^s.
\]
For $c>1$, summing this bound over $2\leq s\leq h_n$ gives
$O(n^{-2(c-1)})$. For $s=1$, the exact bound is
$n(1-k/n)^m$.

Choose a fixed $\delta>0$ small enough that $(1-\delta)^{k-1}>1/2$.
For $h_n<s\leq\delta n$ and sufficiently large $n$ we have
$a_s\geq ks/(2n)$ and
\[
 \log\binom ns\leq s\log(en/s)\leq s(2\log\log n+O(1)).
\]
Thus the sum of the support bounds in this range is
$\exp\{-\Omega(n/\log n)\}$.

Finally, if $s>\delta n$, some nonzero label $a\in\mathbb F_p$ occurs at least
$t\geq\delta n/(p-1)$ times. A row selecting $k$ of these coordinates has scalar
product $ka\ne0$, because $a\neq0$, $k\neq0$ in $\mathbb F_p$ as $1\leq k<p$, and $\mathbb F_p$ has no zero divisors. For sufficiently large $n$, its probability is at least
\[
 \frac{\binom tk}{\binom nk}
 \geq\left(\frac{t-k+1}{n}\right)^k
 \geq\rho:=\left(\frac{\delta}{2(p-1)}\right)^k>0.
\]
There are at most $p^n$ labeled vectors in this last range, so their total
probability of being in the kernel is at most
\[
 p^n(1-\rho)^m\leq\exp\{n\log p-\rho m\}
                      =\exp\{-\Omega(n\log n)\}.
\]
We have proved, for $c>1$,
\[
 \P[\operatorname{rank}_{\mathbb R}(\bm X)<n]
 \leq n(1-k/n)^m+O(n^{-2(c-1)})+\exp\{-\Omega(n/\log n)\}
      =(1+o(1))n^{-(c-1)}.
\]

For the matching lower bound, let $Z$ be the number of zero columns.
Then
\begin{align*}
\E Z&=n(1-k/n)^m=(1+o(1))n^{-(c-1)},\\
\E[Z(Z-1)]&=n(n-1)\left(\frac{(n-k)(n-k-1)}{n(n-1)}\right)^m\\
&=(1+o(1))n^{-2(c-1)}.
\end{align*}
For $c>1$, Bonferroni gives
$\P[Z>0]\geq\mathbb EZ-\tfrac12\mathbb E[Z(Z-1)]
=(1+o(1))n^{-(c-1)}$; a zero column forces real rank deficiency.
For $c<1$, note that
\[
 \frac{(n-k)(n-k-1)}{n(n-1)}
 =\left(1-\frac kn\right)^2
   \left(1-\frac{k}{(n-1)(n-k)}\right)
 \leq\left(1-\frac kn\right)^2.
\]
The zero-column indicators therefore have nonpositive pairwise covariances,
so $\operatorname{Var}Z\leq\mathbb EZ$. Chebyshev's inequality gives
$\P[Z=0]\leq1/\mathbb EZ\to0$. The final claim follows by monotonicity in $m$.